\theoremstyle{plain}
\numberwithin{equation}{section}
\newtheorem{theorem}{Theorem}
\newtheorem{lemma}{Lemma}
\newtheorem{corollary}{Corollary}
\newtheorem{proposition}{Proposition}
\theoremstyle{remark}
\DeclareMathOperator{\Rset}{\mathbf{R}}
\DeclareMathOperator{\vol}{\textrm{vol}}
\definecolor{brown}{rgb}{0.5,0,0}
\definecolor{backgroundcolor}{rgb}{0.98, 0.92, 0.73}
\newcommand{\Sscr}{\mathscr S}
\newcommand{\Cscr}{\mathscr C}
\newcommand{\Nscr}{\mathscr N}
\newif\ifprint
\def\R{\mathbb R}
\author[Q.A. Ng\^o]{Qu\^{o}\hspace{-0.5ex}\llap{\raise 1ex\hbox{\'{}}}\hspace{0.5ex}c Anh Ng\^o}
\address[Q.A. Ng\^o]{Department of Mathematics\\
College of Science, Vi\^{e}t Nam National University\\
H\`{a} N\^{o}i, Vi\^{e}t Nam.}
\email{\href{mailto: Q.A. Ng\^o <nqanh@vnu.edu.vn>}{nqanh@vnu.edu.vn}}
\email{\href{mailto: Q.A. Ng\^o <bookworm\_vn@yahoo.com>}{bookworm\_vn@yahoo.com}}
\author[V.H. Nguyen]{Van Hoang Nguyen}
\address[V.H. Nguyen]{School of Mathematical Sciences\\
Tel Aviv University\\
Tel Aviv 69978, Israel.}
\address[V.H. Nguyen]{Institut de Math\'ematiques de Toulouse\\
Universit\'e Paul Sabatier\\
31062 Toulouse c\'edex 09, France.}
\email{\href{mailto: V.H. Nguyen <van-hoang.nguyen@math.univ-toulouse.fr>}{van-hoang.nguyen@math.univ-toulouse.fr}}
\email{\href{mailto: V.H. Nguyen <vanhoang0610@yahoo.com>}{vanhoang0610@yahoo.com}}
\begin{document} 

\allowdisplaybreaks

\setpagewiselinenumbers
\setlength\linenumbersep{100pt}

\title[Sharp reversed Hardy--Littlewood--Sobolev inequality on $\Rset_+^n$]
{Sharp reversed Hardy--Littlewood--Sobolev inequality on the half space $\Rset_+^n$}

\begin{abstract}
This is the second in our series of papers concerning some reversed Hardy--Littlewood--Sobolev inequalities. In the present work, we establish the following sharp reversed Hardy--Littlewood--Sobolev inequality on the half space $\Rset_+^n$
for any nonnegative functions $f\in L^p(\partial \Rset_+^n)$, $g\in L^r(\Rset_+^n)$, and $p,r\in (0,1)$, $\lambda > 0$ such that $(1-1/n)1/p + 1/r -(\lambda-1) /n =2$. Some estimates for $\Cscr_{n,p,r}$ as well as the existence of extrema functions for this inequality are also considered. New ideas are also introduced in this paper. 
\end{abstract}

\date{\bf \today \; at \, \currenttime}



\maketitle


\section{Introduction}

This is the second in our series of papers concerning some reversed Hardy--Littlewood--Sobolev (HLS) inequalities which tell us how to bound $\int \int f(x) |x-y|^\lambda g(y) dxdy$ in terms of $\|f\|_p \|g\|_r$ for suitable numbers $p$ and $r$. In the literature, the classical HLS inequality named after Hardy--Littlewood \cite{hl1928,hl1930} and Sobolev \cite{sobolev1938} concerns the following estimate
\begin{equation}\label{eq:HLSineq}
\int_{\Rset^n} \int_{\Rset^n} \frac{f(x)  g(y)}{|x-y|^{-\lambda}} dx dy \leqslant \Nscr_{n,\lambda,p} \Big( \int_{\Rset^n} |f|^p dx \Big)^{1/p} \Big( \int_{\Rset^n} |g|^r dx \Big)^{1/r} 
\end{equation}
for some constant $\Nscr_{n,\lambda,p} > 0$ and for all $f \in L^p(\Rset^n)$ and $g \in L^r(\Rset^n)$ where the positive constants $p$ and $r$ are related via the following $1/p+1/r-\lambda/n=2$ for some $\lambda <0$ is crucial. As the impact of the HLS inequality on quantitative theories of solutions of (partial) differential equations is now clear, the HLS inequality as well as its variants have captured much attention by many mathematicians.

In one way or another, the HLS inequality and other famous inequalities are related to each other; see \cite{beckner1993}. To see this more precise, it is quite a surprise to remark that the HLS inequality and the Sobolev inequality are indeed dual for certain families of exponents. First, we let $\lambda = n-2s$ in \eqref{eq:HLSineq} and rewrite the right hand side of \eqref{eq:HLSineq} with the fact that ${2^{ - 2s}}{\pi ^{ - n/2}} \Gamma (n/2-s)/\Gamma (s)$ is simply the Green function of the operator $(-\Delta)^s$ in $\Rset^n$ for each $s \in (0,n/2)$ to get
\begin{equation}\label{eqHLS->Sobolev}
\int_{\Rset^n} f (-\Delta)^{-s}(f) dx \leqslant \Sscr_{n,s} \Big( \int_{\Rset^n} |f|^{2n/(n+2s)} dx \Big)^{1+2s/n}.
\end{equation}
Hence, the sharp HLS inequality can imply the sharp Sobolev inequality. Further seminal works also reveal that the sharp HLS inequality can also imply the Moser--Trudinger--Onofri inequality and the logarithmic HLS inequality \cite{beckner1993}, as well as the Gross logarithmic Sobolev inequality \cite{gross1975}. Clearly, all these inequalities have many important applications in analysis and geometry, as well as in quantum field theory. 

Although the HLS inequality \eqref{eq:HLSineq} (not in the sharp form) was proved earlier, it took quite a long time to find the sharp version with the precise sharp constant of \eqref{eq:HLSineq}; see \cite{l1983}. Although Lieb was able to prove the existence of optimizers for \eqref{eq:HLSineq} for any $p$ and $r$, neither the sharp constant nor the precise form of optimizers are known except in the diagonal case $p=r$. In this special case, $p=r=2n/(2n+\lambda)$, the sharp constant $\Nscr_{n,\lambda,p}$ is
\[
\Nscr_{n,\lambda,p} =\Nscr_{n,\lambda} = {\pi ^{\lambda /2}}\frac{{\Gamma (n/2 - \lambda /2)}}{{\Gamma (n -  \lambda /2)}}{\left( {\frac{{\Gamma (n)}}{{\Gamma (n/2)}}} \right)^{1-\lambda /n}} .
\]
As we have already mentioned before, in the last two decades, the classical HLS inequality \eqref{eq:HLSineq} has captured much attention by many mathematicians. Some remarkable extensions and generalizations have already been drawn, for example, one has HLS inequalities on Heisenberg groups, on compact Riemannian manifolds, and on weighted forms; see \cite{fl2012a,hanzhu,st1958} for details. 

Among extensions and generalizations in the literature, let us mention the following two results. The first result concerns a so-called reversed HLS inequality which recently proved by Dou and Zhu in \cite{dz2014} for the case of the whole space $\Rset^n$. By using an extension of the classical Marcinkiewicz interpolation theorem applying to certain singular integral operators, Dou and Zhu established a reversed HLS inequality in the whole space $\Rset^n$ which turns out to be useful when studying some curvature equations with negative critical Sobolev exponents; see \cite{zhu2014}. Their result can be stated as follows.

\begin{theorem}[see \cite{dz2014}]\label{thmDouZhuReversed}
Let $p,r\in (0,1)$ and $\lambda > 0$ such that $1/p + 1/r -\lambda /n =2$. Then there exists a best constant $\Cscr_{n,p,r}>0$ such that for any nonnegative functions $f\in L^p(\Rset^n)$ and $g\in L^r(\Rset^n)$, we have
\begin{equation}\label{eq:RHLS}
\int_{\Rset^n} \int_{\Rset^n} f(x) |x-y|^\lambda g(y) dx dy \geqslant \Cscr_{n,p,r} \Big( \int_{\Rset^n} |f|^p dx \Big)^{1/p} \Big( \int_{\Rset^n} |g|^r dx \Big)^{1/r} .
\end{equation}
\end{theorem}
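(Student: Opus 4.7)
The plan is to imitate the classical proof of the Hardy--Littlewood--Sobolev inequality, but with every estimate replaced by its reverse analogue suited to the regime $p,r\in(0,1)$ and to the anti-singular kernel $|x-y|^\lambda$, $\lambda>0$. Concretely, I would reformulate \eqref{eq:RHLS} as a reverse strong-type bound on the linear operator
\[
T_\lambda g(x)=\int_{\Rset^n}|x-y|^\lambda g(y)\, dy,
\]
derive this bound from a reverse weak-type endpoint estimate, and conclude via a Marcinkiewicz-style interpolation tailored to the reverse direction.

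\textbf{Duality and normalization.} The reverse H\"older inequality for nonnegative functions and $0<p<1$ gives
\[
\int_{\Rset^n} f(x)\, T_\lambda g(x)\, dx \;\geqslant\; \|f\|_p\,\|T_\lambda g\|_q,\qquad q=\frac{p}{p-1}<0,
\]
where $\|h\|_q=(\int h^q)^{1/q}$ is the natural negative-exponent functional. Hence \eqref{eq:RHLS} is equivalent to the one-sided bound $\|T_\lambda g\|_q\geqslant \Cscr_{n,p,r}\|g\|_r$ along the scaling line $1/p+1/r-\lambda/n=2$. Dilation invariance reduces the problem to $\|g\|_r=1$, and translation invariance lets me assume the mass of $g$ is centered at the origin.

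\textbf{Endpoint estimate and interpolation.} The heart of the argument is a reverse weak-type $(r,q)$ bound of the form
\[
\int_E T_\lambda g(x)\, dx \;\geqslant\; c\,|E|^{1-1/|q|}
\]
for every measurable $E\subset \Rset^n$ of finite positive measure and every $g\geqslant 0$ with $\|g\|_r=1$. I would prove it by combining the elementary geometric lower bound $|x-y|^\lambda \geqslant c_\lambda(|x|^\lambda+|y|^\lambda)$ outside a diagonal neighbourhood, a dyadic annular decomposition of $E$ and of $\mathrm{supp}(g)$, and the reverse Minkowski inequality valid for $L^r$ with $r<1$. A reversed layer-cake argument then promotes this weak bound to the desired strong reverse bound on $T_\lambda$, producing an explicit positive constant $\Cscr_{n,p,r}$.

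\textbf{Main obstacle.} The hardest step will be the interpolation from the endpoint to arbitrary admissible pairs on the scaling line $1/p+1/r-\lambda/n=2$. Classical Marcinkiewicz interpolation relies on Chebyshev's inequality and on the dyadic summability of distributional level sets, and both fail in the reverse regime where $r<1$ and every inequality points the opposite way; one must instead develop a bespoke reverse Marcinkiewicz theorem, whose proof requires a careful distributional comparison together with verification of endpoint behaviour at both ends of the scaling curve. A related conceptual difficulty is that symmetric-decreasing rearrangement combined with Riesz' rearrangement inequality moves $\int\int f(x)|x-y|^\lambda g(y)\, dx\, dy$ in the wrong direction when $\lambda>0$, so the standard reduction to radial test functions is unavailable and all concentration arguments must be carried out by direct geometric bookkeeping rather than inherited from rearrangement symmetry.
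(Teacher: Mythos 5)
Your proposal follows the original Dou--Zhu route (reverse weak-type endpoint plus a reverse Marcinkiewicz interpolation), which is not the route the present paper adopts, and, more importantly, it leaves the central step unproven.

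Regarding the comparison of routes: the paper does not reprove Theorem~\ref{thmDouZhuReversed} by interpolation at all. As the authors say in the introduction, their companion paper \cite{NgoNguyen2015} gave an alternative proof that is ``simpler and more direct'' than the interpolation argument of \cite{dz2014}, and that method is displayed in Section~2 here for the half-space analogue. Its skeleton is: write
\[
I(f,g)=\lambda\int_0^\infty\!\!\int_0^\infty\!\!\int_0^\infty c^{\lambda-1}\,I(a,b,c)\,da\,db\,dc,
\qquad
I(a,b,c)=\iint \chi_{\{f>a\}}\chi_{\Rset^n\setminus B_c}\chi_{\{g>b\}},
\]
prove the pointwise lower bound $I(a,b,c)\geqslant \tfrac12\,u(a)v(b)$ for $c$ up to a volume-determined threshold, split the range of $b$ at $a^\beta$ with $\beta=p/r$ (on the full space), and then close the estimate with the reversed H\"older inequality in the $a,b$ variables, convexity of $t\mapsto t^{\gamma}$, and Jensen's inequality, using the normalization $p\int u(a)a^{p-1}da=r\int v(b)b^{r-1}db=1$. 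No weak-type bound and no interpolation theorem are invoked; the constant drops out of a finite chain of elementary inequalities. Your proposal, by contrast, would require building a reverse Marcinkiewicz machine, which is precisely what the paper set out to avoid.

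Regarding the gap: you correctly flag the interpolation step as the ``main obstacle,'' but you then leave it unresolved; this is the crux of the whole argument, not a detail. Two further points merit care. First, the shape of your proposed reverse weak-type estimate $\int_E T_\lambda g\geqslant c|E|^{1-1/|q|}$ is not a weak-type bound in any standard sense (and with $q<0$ one must be careful about what ``weak $(r,q)$'' should even mean for the operator $T_\lambda$, whose output is everywhere infinite unless $g$ has enough decay), and your plan to ``promote'' it via a reversed layer-cake argument is not the mechanism by which weak bounds become strong bounds; that is the job of the interpolation theorem you have postponed. Second, your remark that symmetric-decreasing rearrangement moves $\iint f(x)|x-y|^\lambda g(y)\,dx\,dy$ in the ``wrong'' direction and hence rules out reduction to radial functions is misleading: as Lemma~\ref{lemmaexistence} (via Lemma~1 of \cite{NgoNguyen2015}) makes clear, for $q<0$ one has $\int |E_\lambda f|^q\leqslant\int|E_\lambda f^\star|^q$, which, because $1/q<0$, gives $\|E_\lambda f\|_q\geqslant\|E_\lambda f^\star\|_q$ -- exactly the direction needed to minimize over radial profiles. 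Rearrangement is thus still usable, just with a different lemma than classical Riesz; but in any case the layer-cake proof of the inequality itself does not even need it.
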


In the first paper of our series \cite{NgoNguyen2015}, we provided an alternative way to reprove Theorem \ref{thmDouZhuReversed} which is simpler and more direct that the method used in \cite{dz2014}. We also calculated the sharp constant $\Cscr_{n,\lambda}$ in the diagonal case $p=r=2n/(2n+\lambda)$. It it quite interesting to note that the sharp constant $\Cscr_{n,\lambda}$ also takes the same form as $\Nscr_{n,\lambda}$. (However, the value of $\Cscr_{n,\lambda}$ and $\Nscr_{n,\lambda}$ are different since we require $\lambda<0$ in $\Nscr_{n,\lambda}$ and $\lambda >0$ in $\Cscr_{n,\lambda}$.)

The second result that we wish to address is a so-called HLS inequality on the upper half space $\Rset_+^n$ which can be seen as an extension of \eqref{eq:HLSineq} which was established by Dou and Zhu in \cite{dz2013}.

In order to state this result, some notation and conventions are needed. First, given $n \geqslant 3$, by $\Rset_+^n$, we mean the Euclidean half space given by 
$$\Rset_+^n = \{y = (y',y_n) \in \Rset^n : y' \in \Rset^{n-1}, y_n > 0\}$$ 
and by $\partial \Rset_+^n$, we mean the boundary of $\Rset_+^n$; hence we can identify $\partial \Rset_+^n = \Rset^{n-1}$. Upon using the above notations and for the sake of simplicity, for each $y \in \Rset_+^n$ and $x \in \partial \Rset_+^n$, we can write 
$$|x-y| = \sqrt{|y'-x|^2 + y_n^2}$$ 
with, of course, $y=(y',y_n)$. Conventionally and for the sake of clarity, throughout the present work, by $x$ we usually mean a point in $\partial \Rset_+^n$ while by $y$ we mean a point in $\Rset_+^n$.

We are now in a position to state the HLS inequality on the upper half space $\Rset_+^n$ in \cite{dz2013}.

\begin{theorem}[see \cite{dz2013}]\label{thmDouZhu}
For any $n\geqslant 2$, $\lambda \in (1-n, 0)$ and $p,r>1$ satisfying $(n-1)/(np) +1/r - \lambda /n =2-1/n$, there exists a best constant $\Nscr_{n,\alpha,p}^+ > 0$ depending only on $n,\alpha$ and $p$ such that for any nonnegative functions $f\in L^p(\partial \Rset_+^n)$ and $g\in L^r(\Rset_+^n)$, it holds
\begin{equation}\label{eq:DZreverseHLS}
\int_{\partial \Rset_+^n} \int_{\Rset_+^n} \frac{f(x)  g(y)}{|x-y|^{-\lambda}} dy dx \leqslant \Nscr_{n,\alpha,p}^+ \Big( \int_{\partial\Rset_+^n} |f|^p dx \Big)^{1/p} \Big( \int_{\Rset_+^n} |g|^r dy \Big)^{1/r} .
\end{equation}
\end{theorem}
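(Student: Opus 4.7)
The plan is to reformulate inequality \eqref{eq:DZreverseHLS} by duality as a mapping property for the integral operator
$$Tg(x) = \int_{\Rset_+^n} |x-y|^\lambda g(y)\,dy, \quad x \in \partial\Rset_+^n,$$
namely to prove the bound $\|Tg\|_{L^{p'}(\partial\Rset_+^n)} \leqslant C \|g\|_{L^r(\Rset_+^n)}$ with $p'=p/(p-1)$. The scaling relation $(n-1)/(np)+1/r-\lambda/n = 2-1/n$ is exactly the condition under which this bound is dilation invariant: testing against $g_\delta(y)=g(\delta y)$ and $f_\delta(x)=f(\delta x)$ produces the same power of $\delta$ on both sides precisely when the relation holds, so the condition is forced (no other scaling-compatible inequality is possible).

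The approach I would take is a generalized Marcinkiewicz interpolation adapted to operators sending functions on a domain to functions on its boundary. Concretely, I would first verify the weak-type estimate
$$\bigl|\{x \in \partial\Rset_+^n : Tg(x) > t\}\bigr| \leqslant C \Big(\frac{\|g\|_{L^{r_i}(\Rset_+^n)}}{t}\Big)^{p_i'}$$
at two endpoint pairs $(r_0,p_0')$ and $(r_1,p_1')$ lying on the scaling line, then interpolate to obtain the desired strong bound at $(r,p')$. To prove the weak-type estimate, fix $x\in\partial\Rset_+^n$ and $R=R(t)>0$ and split
$$Tg(x) = \int_{|x-y|<R}|x-y|^\lambda g(y)\,dy + \int_{|x-y|\geqslant R}|x-y|^\lambda g(y)\,dy.$$
Since $\lambda<0$, the tail piece is controlled either pointwise by $R^\lambda\|g\|_{L^1}$ or, via H\"older, by a multiple of $R^{\lambda+n/r'}\|g\|_{L^r}$, while the singular piece is handled with the layer-cake representation together with the observation that $\{y \in \Rset_+^n : |x-y|^\lambda > s\}$ is a half ball of volume comparable to $s^{n/\lambda}$. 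Optimizing the choice of $R$ in $t$ then yields the claimed distributional bound.

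The principal obstacle, relative to the classical HLS inequality on the whole of $\Rset^n$, is the dimensional mismatch between the source and target measure spaces: the operator sends functions on an $n$-dimensional half space to functions on its $(n-1)$-dimensional boundary. This is visible in the unusual factor $(n-1)/(np)$ appearing in the scaling relation in place of the standard $1/p$, and it rules out any direct application of Young's convolution inequality or O'Neil's mixed-norm theorem. One must instead exploit the specific geometry through the representation $|x-y|=\sqrt{|y'-x|^2+y_n^2}$ for $x\in\partial\Rset_+^n$ and $y=(y',y_n)\in\Rset_+^n$, which produces an anisotropic distribution function for the kernel that must be tracked carefully when integrating over $x$. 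Once boundedness is established at some finite constant, existence of the best constant follows by passing to a maximizing sequence and using the translation and dilation symmetries preserved by the pair $(\Rset_+^n,\partial\Rset_+^n)$ together with a standard concentration-compactness or symmetrization argument to extract a nontrivial limit.
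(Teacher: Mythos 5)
The paper does not prove this theorem; it is quoted from Dou and Zhu \cite{dz2013} solely to motivate the reversed inequality that the paper itself establishes (Theorem~\ref{thmMAIN}), so there is no proof in the text against which to compare yours.

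Assessed on its own merits, your outline does match the strategy of \cite{dz2013}: dualize, and prove the mapping estimate via an extension of Marcinkiewicz interpolation to operators acting from $\Rset_+^n$ to its boundary. But the weak-type step as written has a gap. The two H\"older estimates you propose for the near and far pieces of the split force incompatible conditions on the exponent: convergence of $\int_{|x-y|<R,\ y\in\Rset_+^n}|x-y|^{\lambda r'}\,dy$ near the diagonal requires $\lambda r'+n>0$, while convergence of the far-field analogue $\int_{|x-y|\geqslant R}|x-y|^{\lambda r'}\,dy$ requires $\lambda r'+n<0$; both cannot hold for the same $r'$, so one cannot simply take H\"older on each piece and optimize $R$. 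The usual workaround---bound the far piece in $L^\infty$ against $\|g\|_{L^1}$, normalize, optimize $R$ in $t$ to produce a weak $(1,q)$-type bound, repeat at a second exponent pair, and only then interpolate---is the technical bulk of the argument, and the $(n-1)$-versus-$n$ dimensional mismatch that you correctly flag is exactly why Dou and Zhu had to re-prove a non-standard Marcinkiewicz theorem rather than cite an off-the-shelf one. As it stands, your proposal identifies the correct strategy but stops short of the part that makes it work.
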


Motivated by Theorems \ref{thmDouZhuReversed} and \ref{thmDouZhu}, the aim of the present paper is to propose a reversed version of the classical HLS inequality on the upper half space $\Rset_+^n$. The following theorem is our main result of the paper.

\begin{theorem}\label{thmMAIN}
For any $n\geqslant 2$, $\lambda > 0$ and $p,r\in (0,1)$ satisfying
\begin{equation}\label{eq:condHLS}
\frac{n-1}n \frac1p + \frac1r -\frac{\lambda}n =2-\frac 1n,
\end{equation}
there exists a best constant $\Cscr_{n,\alpha,p}^+ > 0$ depending only on $n,\alpha$ and $p$ such that for any nonnegative functions $f\in L^p(\partial \Rset_+^n)$ and $g\in L^r(\Rset_+^n)$, there holds
\begin{equation}\label{eq:reverseHLS}
\int_{\Rset_+^n} \int_{\partial \Rset_+^n} f(x) |x-y|^\lambda g(y) dx dy \geqslant \Cscr_{n,\alpha,p}^+ \Big( \int_{\partial\Rset_+^n} |f|^p dx \Big)^{1/p} \Big( \int_{\Rset_+^n} |g|^r dy \Big)^{1/r}.
\end{equation}
\end{theorem}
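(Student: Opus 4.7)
The plan is to dually reformulate \eqref{eq:reverseHLS} as a reverse integrability estimate for the integral operator
\[
Tg(x) := \int_{\Rset_+^n} |x - y|^\lambda g(y)\, dy, \qquad x \in \partial \Rset_+^n,
\]
and then establish this estimate by a reversed weak-type argument combined with Marcinkiewicz-type interpolation, adapting the strategy from our earlier paper \cite{NgoNguyen2015} on the full space. The starting point is the reversed H\"older inequality: since $p \in (0,1)$, the conjugate exponent $p' = p/(p-1)$ is negative and one has
\[
\int_{\partial\Rset_+^n} f(x) h(x)\, dx \;\geq\; \Big(\int_{\partial\Rset_+^n} f(x)^p\, dx\Big)^{1/p}\Big(\int_{\partial\Rset_+^n} h(x)^{p'}\, dx\Big)^{1/p'}
\]
for every nonnegative measurable $h$. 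Specializing to $h = Tg$ reduces \eqref{eq:reverseHLS} to a reverse boundedness statement of the form $\|Tg\|_{L^{p'}(\partial\Rset_+^n)} \geq \Cscr_{n,\alpha,p}^+ \|g\|_{L^r(\Rset_+^n)}$ in which both exponents lie in $(0,1)$ (with one negative), so all inequalities ``flip''.

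The second step is to reduce this strong reverse bound to a reverse weak-type inequality of the form
\[
\bigl|\{x \in \partial\Rset_+^n : Tg(x) < t\}\bigr| \;\leq\; C\, t^{\beta}\, \|g\|_{L^{r,\infty}(\Rset_+^n)}^{-\beta}
\]
with $\beta$ chosen to respect the scaling balance \eqref{eq:condHLS}. The main pointwise inputs are $|x-y|^\lambda \geq (|x-y'|^2 + y_n^2)^{\lambda/2}$, written with $y=(y',y_n)$, and the ``diagonal-free'' bound $|x-y|^\lambda \geq c\max\{|x|,|y|\}^\lambda$ on the complement of a neighborhood of the diagonal, which together with a layer-cake decomposition on $g$ and Minkowski's integral inequality in its reversed form (valid because the relevant exponents lie in $(0,1)$) produce the desired endpoint weak bounds. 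A reversed Marcinkiewicz-type interpolation then delivers the strong reverse bound on the full scaling curve \eqref{eq:condHLS}; this balance law is itself dictated by requiring the two sides of \eqref{eq:reverseHLS} to transform identically under the dilation $x\mapsto tx$, $y\mapsto ty$, as a direct check of the scaling exponents $(n-1)/p + n/r$ versus $2n-1+\lambda$ confirms.

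The main obstacle will be the reverse weak-type estimate in the second step. In contrast to the full-space case treated in \cite{dz2014,NgoNguyen2015}, the mixed boundary/bulk geometry here forces a careful separate treatment of the normal variable $y_n$ and the tangential variable $y'$. A promising route is to apply the $\Rset^{n-1}$ reversed HLS (Theorem \ref{thmDouZhuReversed} in dimension $n-1$) slicewise in $y_n$ to control the tangential interaction, and then integrate in $y_n$ against a reverse form of Minkowski's integral inequality, playing the $(n-1)$-dimensional boundary measure off against the $n$-dimensional bulk measure so that the two endpoint weak bounds interpolate exactly onto \eqref{eq:condHLS}. Tracking the resulting exponents carefully enough to extract a positive constant $\Cscr_{n,\alpha,p}^+$ on the full scaling curve is the heart of the argument.
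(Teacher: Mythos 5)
Your approach is genuinely different from the paper's: the paper proves Theorem \ref{thmMAIN} by a direct layer cake representation argument, writing
\[
I(f,g) = \lambda\int_0^\infty\int_0^\infty\int_0^\infty c^{\lambda-1} I(a,b,c)\, da\, db\, dc,
\]
bounding $I(a,b,c) \geqslant u(a)v(b)/2$ on a carefully chosen range of $c$, and then chaining together the reversed H\"older and Jensen inequalities in the level-set variables to land on the constant $C(1/pr)^\gamma$. No interpolation theorem is used. The paper explicitly advertises this as an adaptation of the classical Lieb--Loss proof of HLS, not the Dou--Zhu Marcinkiewicz-type argument. What you propose instead---dualize via reversed H\"older to a reverse $L^r \to L^{p'}$ bound for $Tg$, then establish reverse weak-type endpoints and interpolate---is structurally the route of \cite{dz2014} for the full space, which the authors state they deliberately replaced by something simpler in \cite{NgoNguyen2015}. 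So attributing your strategy to "adapting \cite{NgoNguyen2015}" is misleading; it is closer to adapting \cite{dz2014}.

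There are two concrete gaps in your sketch that would have to be filled before it could count as a proof. First, the slicewise application of Theorem \ref{thmDouZhuReversed} in dimension $n-1$ does not go through as stated: for fixed $y_n>0$, the kernel acting in the tangential variables is $(|x-y'|^2 + y_n^2)^{\lambda/2}$, not the pure power $|x-y'|^\lambda$ to which the $(n-1)$-dimensional reversed HLS applies. The two kernels are comparable only when $|x-y'|\gtrsim y_n$; near the "fiber" $|x-y'|\lesssim y_n$ the true kernel is bounded below by $y_n^\lambda$ and the $(n-1)$-dimensional estimate gives nothing useful, so this regime needs a separate argument that you have not supplied and which, in effect, reproduces the competition between $u(a)^{1/(n-1)}$ and $v(b)^{1/n}$ that the paper handles by the $\max$-splitting in its Step 1. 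Second, the reversed Marcinkiewicz-type interpolation you invoke exists in the literature only for operators acting between $L^p$ spaces over a single ambient measure space (this is what \cite{dz2014} proves and uses). Here the operator maps $L^r(\Rset_+^n)$ to $L^{p'}(\partial\Rset_+^n)$ with the source and target living on spaces of different dimension; the dilation structure intertwines $(n-1)/p$ with $n/r$, so one cannot simply run the single-parameter interpolation with the usual Lorentz machinery. You would need to state and prove an appropriate two-measure reversed interpolation lemma, and verify that the relation \eqref{eq:condHLS} is exactly the resulting interpolation curve. As it stands, your outline names the two hard steps without resolving either, so it does not yet constitute a proof of Theorem \ref{thmMAIN}.
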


To prove Theorem \ref{thmMAIN}, we adopt the standard approach, based on the layer cake representation, for the classical HLS inequality on$\Rset^n$; see \cite[Section 4.3]{liebloss2001}. Once we can establish Theorem \ref{thmMAIN}, it is natural to ask whether the extremal functions for the reversed HLS inequality \eqref{eq:RHLS} actually exist. To this purpose, inspired by \cite{dz2013}, let us first introduce an ``Laplacian-type extension" operator $E_\lambda$ for any function $f$ on $\partial \Rset_+^n$ to a function on $\Rset_+^n$ as follows
\[
(E_\lambda f)(y) = \int_{\partial \Rset_+^n} |x-y|^\lambda f(x) dx
\]
for $y\in \Rset_+^n$. (Note that $E_\lambda f$ does not agree with $f$ on $\partial \Rset_+^n$ since $(E_\lambda f)(y',0) = \int_{\partial \Rset_+^n} |x-y'|^\lambda f(x) dx$.) Then the reversed HLS inequality \eqref{eq:reverseHLS} is equivalent to the following inequality
\begin{equation}\label{eq:anotherform}
\Big( \int_{ \Rset_+^n} |E_\lambda f|^q dy \Big)^{1/q} \geqslant \Cscr_{n,\alpha,p}^+ \Big( \int_{\partial \Rset_+^n} |f|^p dx \Big)^{1/p}
\end{equation}
for any non-negative function $f\in L^p(\partial \Rset_+^n)$ with $q <0$ satisfies
\begin{equation}\label{eq:cond1}
\frac1q =  \frac{n-1}n\, \left(\frac1p -1\right ) - \frac\lambda n 
\end{equation}
which can be computed in terms of $r$ as follows: $1/q=1-1/r$. As a convention, for any $\vartheta<1$ any any function $\phi : \Omega \to \Rset$, by the notation $\phi \in L^\vartheta (\Omega)$, we mean $\int_\Omega |\phi|^\vartheta < +\infty$ although this integral is no longer a norm for $L^\vartheta (\Omega)$ with $\vartheta<1$ since the triangle inequality fails to hold. Being a linear topological space, it is well-known that $L^\vartheta (\Omega)$ with $\vartheta \in (0,1)$ has trivial dual.

Similar, one can consider the ``restriction" operator $R_\lambda $ which maps any function $g$ on $\Rset_+^n$ to a function on $\partial \Rset_+^n$ as the following
\[
(R_\lambda g)(x) = \int_{\Rset_+^n} |y-x|^\lambda g(y) dy
\]
for $x\in \partial \Rset_+^n$. Note that the operators $I_\lambda$ and $R_\lambda$ are dual in the sense that for any functions $f$ on $\partial \Rset_+^n$ and $g$ on $\Rset_+^n$, the following identity
\[
\int_{\Rset_+^n} (I_\lambda f)(y) g(y) dy = \int_{\partial \Rset_+^n} f(x) (R_\lambda g)(x) dx
\]
holds, thanks to the Tonelli theorem.

Once we introduce $R_\lambda $, we can easily see that the reversed HLS inequality \eqref{eq:reverseHLS} is equivalent to the following inequality
\begin{equation}\label{eq:anotherform1}
\Big( \int_{\partial \Rset_+^n} |R_\lambda g|^q dx \Big)^{1/q} \geqslant \Cscr_{n,\alpha,p}^+ \Big( \int_{ \Rset_+^n} |g|^r dx \Big)^{1/r} 
\end{equation}
for any non-negative function $g\in L^r( \Rset_+^n)$ with $q<0$ satisfies
\begin{equation}\label{eq:cond2}
\frac{1}{q}  = \frac{n}{{n - 1}}\left( {\frac{1}{r} - 1} \right) - \frac{\lambda }{n - 1}
\end{equation}
which can be computed in terms of $p$ as follows: $1/q=1-1/p$. 

In view of \eqref{eq:anotherform}, to study the existence of extremal functions for \eqref{eq:RHLS}, it is equivalent to studying the following minimizing problem
\begin{equation}\label{eq:variationalprob}
\Cscr_{n,\alpha,p}^+ :=\inf_f \Big\{ \|E_\lambda f\|_{L^q(\Rset_+^n)} : f\geqslant 0, \|f\|_{L^p(\partial \Rset_+^n)} =1\Big\}.
\end{equation}
It is elementary to verify that extremal functions for \eqref{eq:reverseHLS} are those solving the minimizing problem \eqref{eq:variationalprob}. In the following result, we prove that such a extremal function for \eqref{eq:reverseHLS} indeed exists.

\begin{theorem}\label{Existence}
There exists some function $f\in L^p(\partial \Rset_+^n)$ such that $f \geqslant 0$, $\|f\|_{L^p(\partial \Rset_+^n)} =1$ and $\|E_\lambda f\|_{L^q(\Rset_+^n)} = \Cscr_{n,\alpha,p}^+$. Moreover, if $f$ is a minimizer of \eqref{eq:variationalprob} then there exist a non-negative, strictly decreasing function $h$ on $[0, +\infty)$ and some $x_0\in \partial \Rset_+^n$ such that $f(x) = h(|x+x_0|)$ a.e. $x\in \partial \Rset_+^n$.
\end{theorem}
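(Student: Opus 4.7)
The plan is to solve the variational problem \eqref{eq:variationalprob} by a symmetrization-plus-compactness argument. The first step is to show that any minimizing sequence $\{f_k\}$ may be replaced by one consisting of symmetric decreasing functions about some point of $\partial \Rset_+^n$. This rests on a reversed Riesz-type rearrangement inequality on the half space: for each fixed $y_n > 0$ the kernel $|x-y|^\lambda = (|x-y'|^2 + y_n^2)^{\lambda/2}$ is a symmetric \emph{increasing} function of $x - y'$ on $\Rset^{n-1}$, and the truncation device $|z|^\lambda = \lim_{R \to \infty} [R^\lambda - (R^\lambda - |z|^\lambda)_+]$ combined with the classical Riesz rearrangement inequality applied to the (now symmetric decreasing) truncated kernel yields
\[
\int_{\Rset_+^n} \int_{\partial \Rset_+^n} f(x)\, |x-y|^\lambda g(y)\, dx\, dy \geqslant \int_{\Rset_+^n} \int_{\partial \Rset_+^n} f^*(x)\, |x-y|^\lambda g^*(y)\, dx\, dy,
\]
where $f^*$ is the symmetric decreasing rearrangement of $f$ on $\partial \Rset_+^n$ and $g^*$ is the slice-wise (Steiner) rearrangement of $g$ in the first $n-1$ variables. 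Both operations preserve $\|f\|_{L^p(\partial \Rset_+^n)}$ and $\|g\|_{L^r(\Rset_+^n)}$; combined with translation invariance along $\partial \Rset_+^n$, this allows us to assume each $f_k$ is symmetric decreasing and centred at the origin.

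The second step exploits the dilation invariance $x \mapsto \mu x$, $y \mapsto \mu y$ of \eqref{eq:reverseHLS}, which is compatible with the scaling condition \eqref{eq:condHLS}, to normalize $\int_{\{|x| \leqslant 1\}} f_k^p\, dx = 1/2$ for every $k$. Monotonicity of $f_k$ then yields a uniform pointwise bound for $|x| \geqslant 1$, giving tightness of the tails, and Helly's selection theorem extracts a subsequence converging pointwise almost everywhere to a symmetric decreasing $f \geqslant 0$ with $\|f\|_{L^p(\partial \Rset_+^n)} \leqslant 1$. Pointwise convergence $E_\lambda f_k(y) \to E_\lambda f(y)$ on $\Rset_+^n$ then follows from dominated convergence on compact regions together with the tail control. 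A Fatou-type argument for the $L^q$ norm with $q < 0$, taking the form of upper semicontinuity $\|E_\lambda f\|_{L^q} \geqslant \limsup_k \|E_\lambda f_k\|_{L^q}$, combined with Theorem \ref{thmMAIN} applied to $f$ itself and the normalization preventing escape of mass, forces both $\|f\|_{L^p} = 1$ and $\|E_\lambda f\|_{L^q} = \Cscr_{n,\alpha,p}^+$.

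For the symmetry statement, any minimizer $f$ can itself be rearranged to a minimizer $\widetilde f$ with $\|E_\lambda \widetilde f\|_{L^q} \leqslant \|E_\lambda f\|_{L^q}$; minimality forces equality throughout and, in turn, equality in the truncated Riesz inequalities. Rigidity results for Riesz's rearrangement inequality in the spirit of \cite{liebloss2001} then imply that, up to translation by some $x_0 \in \partial \Rset_+^n$, $f$ agrees almost everywhere with a symmetric decreasing function, so $f(x) = h(|x + x_0|)$ for a non-negative decreasing $h$. Strict monotonicity of $h$ follows by contradiction: if $h$ were constant on an interval, a level set of $f$ would carry positive but non-ball measure, and a strict rearrangement at that level would strictly reduce the double integral for a suitable $g$, violating minimality. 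The principal obstacle throughout is the compactness step in the unusual regime $p \in (0,1)$, $q < 0$, where standard weak compactness, Banach-Alaoglu, and the Brezis-Lieb splitting are all unavailable; one must rely entirely on the monotonicity forced by rearrangement, the two-parameter conformal normalization ruling out vanishing and concentration, and the reversed semicontinuity of the $L^q$ norm to conclude that the pointwise limit actually realizes the infimum.
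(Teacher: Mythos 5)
Your overall strategy --- symmetrize, normalize by dilation, extract a pointwise limit via Helly's theorem, and pass to the limit --- is the same one the paper follows, and your first step (a reversed Riesz/Steiner rearrangement reduction, matching the paper's Lemma~\ref{lemmaexistence}) and the final rigidity argument are essentially right. The compactness step, however, has a genuine gap. You invoke ``upper semicontinuity $\|E_\lambda f\|_{L^q} \geqslant \limsup_k \|E_\lambda f_k\|_{L^q}$,'' and indeed for $q<0$ naive Fatou applied to the nonnegative functions $(E_\lambda f_k)^q$ gives precisely this. But that is the \emph{wrong} direction for a minimization problem: once you know $\|f\|_{L^p}=1$, Theorem~\ref{thmMAIN} already gives $\|E_\lambda f\|_{L^q}\geqslant \Cscr^+_{n,\alpha,p}$, so your Fatou inequality adds nothing. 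What is needed is the reverse inequality $\|E_\lambda f\|_{L^q}\leqslant \Cscr^+_{n,\alpha,p}$, equivalently $\int (E_\lambda f)^q \geqslant (\Cscr^+_{n,\alpha,p})^q$, and naive Fatou does not give this; it can be strict. The paper obtains the correct direction by first producing a \emph{uniform} pointwise lower bound $(E_\lambda f_j)(y)\geqslant g(y):=C_2(1+|y|^\lambda)$ with $\int_{\Rset^n_+} g^q\,dy<\infty$, and then applying Fatou to the nonnegative differences $g^q-(E_\lambda f_j)^q$; this ``shifted Fatou'' is what flips the direction. Nothing in your proposal plays that role.

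The uniform lower bound in turn depends on a non-vanishing step you have omitted. Your normalization $\int_{|x|\leqslant 1}f_k^p\,dx=1/2$ does not force $f_k(1)\geqslant c_0$ for a uniform $c_0>0$ (all that mass could sit in $\{|x|\leqslant \delta\}$ with $\delta\to 0$), so it does not yield $(E_\lambda f_k)(y)\geqslant c_0\int_{|x|\leqslant 1}|x-y|^\lambda dx$. The paper instead proves a ``non-vanishing'' lemma (Lemma~\ref{farawayzero}), showing that if $\sup_r r^{(n-1)/p}f_j(r)\to 0$ then $\|E_\lambda f_j\|_{L^q}\to\infty$, contradicting the fact that the sequence is minimizing; this allows one to rescale so that $f_j(1)>c_0$ uniformly, which is exactly what produces $g$. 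Relatedly, your assertion that ``monotonicity of $f_k$ yields tightness of the tails'' is too quick: monotonicity and normalization only give $f_k(r)\leqslant C r^{-(n-1)/p}$, and $r^{-(n-1)}\cdot r^{n-2}=r^{-1}$ is not integrable at infinity, so this borderline bound does not give tightness. The paper derives the strictly better tail estimate $f_j(r)\lesssim r^{-(n+\lambda-1)}$ by first showing that $(E_\lambda f_j)(y_i)$ stays bounded at suitably chosen $y_1,y_2\in\Rset^n_+$ (which again rests on the shifted-Fatou estimate $\int k^q\geqslant(\Cscr^+_{n,\alpha,p})^q$), and only then can it control the tails and conclude $\|f_0\|_{L^p}=1$. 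To complete your proof you would need to supply the non-vanishing lemma, replace your normalization with one giving a uniform pointwise lower bound on $E_\lambda f_j$, and run Fatou on $g^q-(E_\lambda f_j)^q$ rather than on $(E_\lambda f_j)^q$ directly.
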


To prove the existence of extremal functions for \eqref{eq:reverseHLS}, we borrow Talenti's proof of the sharp Sobolev inequality by considering \eqref{eq:variationalprob} within the set of symmetric decreasing rearrangements. In view of \eqref{eq:variationalprob}, if we denote by $f^\star$ the symmetric decreasing rearrangement of some function $f \in L^p(\partial \Rset_+^n)$, then it is easy to see that $\|f^\star\|_{L^p(\partial \Rset_+^n)} = \|f\|_{L^p(\partial \Rset_+^n)} $. Therefore, it suffices to compare $\|E_\lambda f\|_{L^q(\Rset_+^n)}$ and $\|E_\lambda f^\star\|_{L^q(\Rset_+^n)} $. The key ingredient in our analysis is to characterize $E_\lambda f^\star$ by showing that it depends on two parameters: one is the distance from the boundary and the other is some radial variable on this boundary; see Lemma \ref{lemmaexistence}. Using the characterization of $E_\lambda f^\star$, we successfully obtain the existence result.

We now turn our attention to the extremal function found in Theorem \ref{Existence} above. In order to discuss further, let us first denote the following functional
\[
F_\lambda (f,g) = \int_{\Rset_+^n} \int_{\partial\Rset_+^n} f(x) |x-y|^\lambda g(y) dx dy
\]
for any nonnegative functions $f\in L^p(\partial \Rset_+^n)$ and $g\in L^r(\Rset_+^n)$. Then, in order to study the existence of extremal functions, it is necessary to minimize the functional $F_\lambda$ along with the following two constraints $\int_{\partial\Rset_+^n} |f(x)|^p dx =1$ and $\int_{\Rset_+^n} |g(y)|^r dy =1$. Upon a simple calculation, with respect to the function $f$, the first variation of the functional $F_\lambda$ is nothing but
\[
D_f (F_\lambda) (f,g) (h) = \int_{\partial\Rset_+^n} \Big( \int_{\Rset_+^n} |x-y|^\lambda g(y) dy \Big) h(x) dx
\]
while the first variation of the constraint $\int_{\partial\Rset_+^n} |f(x)|^p dx =1$ is as follows
\[
p \int_{\partial\Rset_+^n} |f(x)|^{p-2} f(x) h(x) dx.
\]
Therefore, by the Lagrange multiplier theorem, there exists some constant $\alpha$ such that
\[
\int_{\partial\Rset_+^n} \Big( \int_{\Rset_+^n} |x-y|^\lambda g(y) dy \Big) h(x) dx = \alpha \int_{\partial\Rset_+^n} |f(x)|^{p-2} f(x) h(x) dx
\]
holds for all $h$ defined in $\partial \Rset_+^n$. From this we know that $f$ and $g$ must satisfy the following equation
\[
\alpha |f(x)|^{p-2} f(x) = \int_{\Rset_+^n} |x-y|^\lambda g(y) dy.
\]
Interchanging the role of $f$ and $g$, we also know that $f$ and $g$ must fulfill the following
\[
\beta |g(y)|^{r-2} g(y) = \int_{\partial \Rset_+^n} |x-y|^\lambda f(x) dx
\]
for some new constant $\beta$. The balance condition guarantees that $\alpha = \beta = 1/F_\lambda (f,g)$. Hence, up to a constant multiple and simply using the following changes $u =f^{p-1}$ and $v = g^{r-1}$, the two relations above lead us to studying the following integral system
\begin{equation}\label{eq:systemDouZhu}
\left\{
\begin{split}
u(x) &= \int_{\Rset_+^n} |x-y|^\lambda v(y)^{1/(r-1)} dy,\\
v(y) &= \int_{\partial \Rset_+^n} |x-y|^\lambda u(x)^{1/(p-1)} dx.
\end{split}
\right.
\end{equation}
Note that the exponents $1/(r-1)$ and $1/(p-1)$ in \eqref{eq:systemDouZhu} are all negative. Concerning to the integral system \eqref{eq:systemDouZhu}, in the case when $1-n < \lambda <0$ and when $r=2n/(2n+\lambda)$ and $p=2(n-1)/(2n+\lambda-2)$, all non-negative integrable solutions of \eqref{eq:systemDouZhu} was already classified in \cite[Section 3]{dz2013} using an integral form of the well-known method of moving spheres. In the literature, the method of moving spheres was first introduced by Li and Zhu in \cite{lz1995}, see also \cite{l2004, xu2005}, which is a variant of the well-known method of moving planes introduced by Aleksandrov \cite{a1958}, see also \cite{s1971, gnn1979, cgs1989, cl1991, clo2005, clo2006}.

The main result in \cite[Section 3]{dz2013} is to show that, up to translations and dilations, any non-negative, measurable solution $(u,v)$ of \eqref{eq:systemDouZhu} must be the following form
\[\left\{
\begin{split}
u(x) =& a_1{\left( {|x - \overline x{|^2} + b^2} \right)^{(\alpha - n)/2}}, \\
v(x,0) = &a_2 {\left( {|x - \overline x{|^2} + b^2} \right)^{(\alpha - n)/2}},
\end{split}
\right.\]
where $a_1, a_2, b>0$ and $x, \overline x \in \partial \Rset_+^n$. Motivated by the above classification by Dou and Zhu, in the last part of our present paper, we also classify solutions of integral systems of the form \eqref{eq:systemDouZhu} where $\lambda>0$. To be precise, we are interested in classification of nonnegative, measurable functions of the following general system
\begin{equation}\label{eqIntegralSystem}
\left\{
\begin{split}
u(x) &= \int_{\Rset_+^n} |x-y|^\lambda v(y)^{-\kappa} dy,\\
v(y) &= \int_{\partial\Rset_+^n} |x-y|^\lambda u(x)^{-\theta} dx,
\end{split}
\right.
\end{equation}
with $\lambda, \kappa, \theta>0$. To achieve that goal, we first establish the following necessary condition.

\begin{lemma}\label{lemNECESSARY}
For $n \geqslant 3$, $\lambda>0$, $ \kappa>0$ and $\theta>0$, then a necessary condition for $\kappa$ and $\theta$ in order for \eqref{eqIntegralSystem} to admit a $C^1$ solution $(u,v)$ defined in $\partial\Rset_+^n \times \Rset_+^n$ is
\begin{equation}\label{eq:NecessaryCond}
\frac{{n - 1}}{n}\frac{1}{{\theta - 1}} + \frac{1}{{\kappa - 1}} = \frac{\lambda }{n}.
\end{equation}
\end{lemma}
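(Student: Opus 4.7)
The plan is to derive a Pohozaev-type identity for \eqref{eqIntegralSystem} which, when combined with a simple duality identity between the two lines of the system, directly forces \eqref{eq:NecessaryCond}. More precisely, the goal is to establish
\[
\frac{n-1}{\theta-1} + \frac{n}{\kappa-1} = \lambda,
\]
which is exactly \eqref{eq:NecessaryCond} multiplied through by $n$.

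The first (easy) step is to multiply the $u$-equation of \eqref{eqIntegralSystem} by $u^{-\theta}(x)$, integrate over $\partial\Rset_+^n$, interchange the order of integration via Tonelli, and then invoke the $v$-equation; this yields the duality identity
\[
A := \int_{\partial\Rset_+^n} u^{1-\theta}(x)\, dx \;=\; \int_{\Rset_+^n} v^{1-\kappa}(y)\, dy \;=:\; B.
\]
The core of the proof is then to evaluate the two quantities
\[
I := \int_{\partial\Rset_+^n} (x \cdot \nabla u)\, u^{-\theta}\, dx, \qquad J := \int_{\Rset_+^n} (y \cdot \nabla v)\, v^{-\kappa}\, dy
\]
in two different ways. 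On the one hand, writing $u^{-\theta}\nabla u = (1-\theta)^{-1} \nabla(u^{1-\theta})$ and integrating by parts on $\partial\Rset_+^n\cong\R^{n-1}$ gives $I = \frac{n-1}{\theta-1} A$, while the same device on $\Rset_+^n$ yields $J = \frac{n}{\kappa-1} B$; crucially, the contribution from the flat portion of the boundary $\{y_n=0\}$ drops out for free since the outward unit normal $\nu = -e_n$ satisfies $y\cdot\nu = -y_n = 0$ there. On the other hand, differentiating the integral representations of $u$ and $v$ under the integral sign (using the $C^1$ hypothesis) and then dotting with $x$, respectively with $y$, produces
\[
x\cdot\nabla u(x) = \lambda \int_{\Rset_+^n} |x-y|^{\lambda-2}\, x\cdot(x-y')\, v^{-\kappa}(y)\, dy,
\]
\[
y\cdot\nabla v(y) = \lambda \int_{\partial\Rset_+^n} |x-y|^{\lambda-2}\, y\cdot(y-x)\, u^{-\theta}(x)\, dx,
\]
with $y=(y',y_n)$. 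The key algebraic fact, valid for $x\in\partial\Rset_+^n$ and $y\in\Rset_+^n$, is
\[
x\cdot(x-y') + y\cdot(y-x) \;=\; |x-y'|^2 + y_n^2 \;=\; |x-y|^2,
\]
so that adding $I$ and $J$ and applying Fubini collapses the integrand to $\lambda\, |x-y|^\lambda u^{-\theta}v^{-\kappa}$, yielding $I + J = \lambda B$. Comparing the two evaluations gives $\frac{n-1}{\theta-1} A + \frac{n}{\kappa-1} B = \lambda B$, and using $A=B$ from the duality step finishes the derivation.

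The main obstacle I expect is the rigorous justification of the vanishing of boundary terms at infinity during the integration-by-parts step, which requires adequate decay of $u^{1-\theta}$ and $v^{1-\kappa}$. The natural route is to first extract the asymptotic behaviour $u(x)\sim c|x|^\lambda$ and $v(y)\sim c'|y|^\lambda$ at infinity directly from the integral representations (combined with the finiteness of $A$ and $B$), conclude that $u^{1-\theta}$ and $v^{1-\kappa}$ decay fast enough once $\theta,\kappa>1$, and then pass to the limit in a ball-truncated version of the above Pohozaev identity. A secondary concern is the convergence of the integrals defining $I$ and $J$, which is likewise handled by the same asymptotics.
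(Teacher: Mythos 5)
Your proof is correct and arrives at the right conclusion, but it takes a genuinely different (and somewhat cleaner) route than the paper. Both proofs start from the same two ingredients: the duality identity $A=\int_{\partial\Rset_+^n}u^{1-\theta}=\int_{\Rset_+^n}v^{1-\kappa}=B$ (the paper's Lemma \ref{lem-IntegralU=IntegralV}), and the two integration-by-parts identities $\int_{\partial\Rset_+^n}\langle\nabla u^{1-\theta},x\rangle\,dx=-(n-1)A$ and $\int_{\Rset_+^n}\langle\nabla v^{1-\kappa},y\rangle\,dy=-nB$, which the paper establishes via the cutoff $\zeta(|x|/R)$ and which you justify via the asymptotics $u\sim c|x|^\lambda$, $v\sim c'|y|^\lambda$. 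Where the two proofs diverge is in the second evaluation of the Pohozaev quantities. The paper sets $y=\mu z$ inside the $u$-equation, differentiates in $\mu$ at $\mu=1$, and picks up a Jacobian term $(n+\lambda)u(x)$ plus an interior integral; after a Fubini exchange this links $\int\langle x,\nabla u\rangle u^{-\theta}$ to $\int\langle y,\nabla v^{1-\kappa}\rangle$ and the extra $(n+\lambda)A$ then cancels against part of $\tfrac{n\kappa}{\kappa-1}B$. You instead differentiate both integral representations directly, form the sum $I+J$, and let the algebraic identity $x\cdot(x-y')+y\cdot(y-x)=|x-y|^2$ (valid precisely because $x\in\partial\Rset_+^n$ has vanishing last coordinate) collapse the combined integrand to $\lambda|x-y|^\lambda u^{-\theta}v^{-\kappa}$, giving $I+J=\lambda B$ with no extraneous $(n+\lambda)$ term to track. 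The trade is that the paper's scaling computation is a one-sided manipulation of the $u$-equation alone, while your version requires both equations simultaneously but treats them symmetrically and avoids the Jacobian bookkeeping. One small point worth making explicit in a write-up: the observation that the finite boundary contribution from $\{y_n=0\}$ drops out because $y\cdot\nu=-y_n=0$ there is exactly right, but the remaining work of killing the boundary terms at infinity is the same kind of truncation argument the paper carries out with the cutoff $\zeta$, so you would do well to present it in that form rather than leave it as a sketch.
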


The condition \eqref{eq:NecessaryCond} usually refers to the critical condition for \eqref{eqIntegralSystem}. Then, we provide the following classification result for solutions of \eqref{eqIntegralSystem} in the case $\kappa = \theta + 2/\lambda$.

\begin{proposition}\label{thmCLASSIFICATION}
Given $n \geqslant 3$, suppose that $\lambda>0$, $\kappa>0$ and $\theta>0$ satisfy $\kappa = \theta + 2/\lambda$. Let $(u,v)$ be a pair of nonnegative Lebesgue measurable functions defined in $\partial\Rset_+^n \times \Rset_+^n$ satisfying \eqref{eqIntegralSystem}. Then $\kappa = 1 + 2n/\lambda$ and hence $\theta = 1 + (2n - 2)/\lambda$ and for some constants $a, b>0$ and some point $\overline x \in \partial\Rset_+^n$, $u$ and $v$ take the following form
\[
u(x)=v(x,0) = a( |x-\overline x|^2 + b^2)^{\lambda/2}
\]
for $x \in \partial \Rset_+^n$.
\end{proposition}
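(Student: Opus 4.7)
The plan is to adapt the integral form of the method of moving spheres used by Dou--Zhu in \cite{dz2013} for $\lambda \in (1-n,0)$ to the present reversed regime $\lambda>0$ with negative exponents $-\kappa,-\theta$. As a preliminary algebraic step, combining the hypothesis $\kappa=\theta+2/\lambda$ with the critical condition \eqref{eq:NecessaryCond} yields a $2\times 2$ linear system in $1/(\theta-1)$ and $1/(\kappa-1)$ whose unique solution is $\theta-1=(2n-2)/\lambda$ and $\kappa-1=2n/\lambda$. This both pins down the exponents asserted in the statement and identifies $\kappa,\theta$ as the critical exponents under which \eqref{eqIntegralSystem} is invariant under Kelvin inversion.

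For each $\bar x\in\partial\Rset_+^n$ and $\mu>0$, consider the inversion $z\mapsto z^\ast := \bar x + \mu^2 (z-\bar x)/|z-\bar x|^2$, which preserves both $\partial\Rset_+^n\setminus\{\bar x\}$ and $\Rset_+^n$. Using the identity $|x^\ast-y^\ast| = \mu^2 |x-y|/(|x-\bar x|\,|y-\bar x|)$ together with the Jacobians $dx^\ast = (\mu/|x-\bar x|)^{2(n-1)}\,dx$ on the boundary and $dy^\ast = (\mu/|y-\bar x|)^{2n}\,dy$ in the interior, a direct change of variables shows that, at the critical exponents above, the rescaled pair
\[
u_{\bar x,\mu}(x) := \Bigl(\frac{|x-\bar x|}{\mu}\Bigr)^{\lambda} u(x^\ast), \qquad v_{\bar x,\mu}(y) := \Bigl(\frac{|y-\bar x|}{\mu}\Bigr)^{\lambda} v(y^\ast)
\]
again satisfies \eqref{eqIntegralSystem}; indeed, all $|x-\bar x|$- and $|y-\bar x|$-powers produced by the change of variables cancel exactly when $\lambda\kappa = \lambda+2n$ and $\lambda\theta = \lambda+2(n-1)$, which is precisely the content of Step~1.

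With the Kelvin invariance in hand, I would run the standard moving-sphere dichotomy at each $\bar x\in\partial\Rset_+^n$, setting
\[
\mu_{\bar x} := \sup\Bigl\{\mu>0 : u\geqslant u_{\bar x,s}\text{ on }\partial\Rset_+^n\setminus B_s(\bar x)\text{ and }v\geqslant v_{\bar x,s}\text{ on }\Rset_+^n\setminus B_s(\bar x)\ \text{for all } s\in(0,\mu)\Bigr\}.
\]
The starting step, that these inequalities hold for all sufficiently small $\mu$, relies on a priori decay/growth estimates for $u$ and $v$ extracted directly from \eqref{eqIntegralSystem}. Since $-\kappa,-\theta<0$, the integrand in \eqref{eqIntegralSystem} reverses monotonicity under pointwise comparison, and a careful sign-flipped version of the Dou--Zhu iteration shows that $\mu$ can be enlarged past any noncritical value. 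Hence at $\mu=\mu_{\bar x}$ one has either $u\equiv u_{\bar x,\mu_{\bar x}}$ and $v\equiv v_{\bar x,\mu_{\bar x}}$, or $\mu_{\bar x}=\infty$; the latter, if it held at every $\bar x$, would be incompatible with finiteness of the integrals in \eqref{eqIntegralSystem}. Once a finite critical radius $\mu(\bar x)>0$ exists at each $\bar x$ with $u\equiv u_{\bar x,\mu(\bar x)}$ on $\partial\Rset_+^n$, the Li--Zhu calculus lemma (see \cite{lz1995,l2004,xu2005}) forces $u(x) = a(|x-x_0|^2+b^2)^{\lambda/2}$ for some $a,b>0$ and $x_0\in\partial\Rset_+^n$; substituting back into the first equation of \eqref{eqIntegralSystem} determines $v(x,0)$ in the same form with the same parameters.

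The main obstacle I anticipate is precisely the sign reversal in the comparison argument. In the classical positive-exponent setting the monotonicity of $t\mapsto t^{-\theta}$ (with $\theta>0$) cooperates with the moving-sphere comparison in the natural direction, but here one must invert the direction at every step and simultaneously extract from the integral equations the a priori asymptotic information (since $(u,v)$ is only assumed measurable) needed both to initiate the procedure and to close it at the critical radius. Verifying compatibility of these estimates with the reversed direction of comparison, particularly on the boundary piece of the system, is where the genuinely new work lies.
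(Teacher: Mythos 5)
Your overall strategy---pin down the exponents via \eqref{eq:NecessaryCond} and the hypothesis $\kappa=\theta+2/\lambda$, verify conformal covariance of the system under Kelvin inversions centered on $\partial\Rset_+^n$, run moving spheres at each boundary point, and finish with the Li--Zhu calculus lemma---is exactly the route the paper takes. However, the comparison inequality in your definition of $\mu_{\bar x}$ has the wrong orientation. Because $\lambda>0$, the function $\xi\mapsto|\xi-\bar x|^{-\lambda/2}u(\xi)$ blows up as $\xi\to\bar x$, so for small radii the inversion satisfies $u_{\bar x,\mu}\geqslant u$ and $v_{\bar x,\mu}\geqslant v$ on $\Sigma_{\bar x,\mu}^{n-1}$ and $\Sigma_{\bar x,\mu}^{n}$, not $u\geqslant u_{\bar x,\mu}$ as you wrote; this is opposite to the classical $\lambda<0$ regime, and the whole dichotomy only closes if run in this orientation. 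This is precisely the ``sign reversal'' you flag as the anticipated obstacle, but as written your starting inequality points the wrong way.

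Beyond that, the proposal is a correct roadmap that leaves unaddressed the analytic scaffolding the paper builds before moving spheres can even begin: the two-sided growth bounds $u\asymp 1+|x|^\lambda$ and $v\asymp 1+|y|^\lambda$, the smoothness of $(u,v)$ (needed to differentiate $|\xi-\bar x|^{-\lambda/2}u(\xi)$ near $\bar x$ and to carry out the continuation past the critical radius), the propagation of $\bar\nu(\bar x_0)<\infty$ from a single point to all of $\partial\Rset_+^n$, and the concrete reason $\bar\nu\equiv+\infty$ is impossible---namely, the Li--Zhu lemmas then force $u$ to be constant and $v$ to depend only on the last coordinate, whence $v(0,y_n)=Cy_n^{2\lambda+n-1}\int_0^\infty(\rho^2+1)^\lambda\rho^{n-2}\,d\rho=+\infty$ since $\lambda>0$ and $n\geqslant 3$, contradicting finiteness. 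You correctly identify these as ``where the genuinely new work lies,'' but they are not optional refinements; they constitute the bulk of the proof.
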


To prove Proposition \ref{thmCLASSIFICATION}, we make use of the method of moving spheres introduced in \cite{lz1995}; see also \cite{lz2003}. For the classical HLS inequality, it is deserved to note that Frank and Lieb \cite{fl2010} successfully used reflection positivity via inversions in spheres to replace the moving spheres argument.  It is likelihood that the inversion positivity could be used in this new scenario and we hope we could treat this issue elsewhere.

As an immediate consequence of Proposition \ref{thmCLASSIFICATION}, we can explicitly compute the sharp constant in the reversed HLS inequality \eqref{eq:reverseHLS} for the special case when $\lambda =2$. Note that in this special case, there hold $p = (n-1)/n$ and $r = n/(n+1)$.
\begin{corollary}\label{explicit}
Let $n\geqslant 2$, then
\begin{equation}\label{eq:explicit}
\Cscr_{n, 1-1/n, n/(n+1)}^+ =\frac{2^{-1+1/n}}\pi \left(\frac{\Gamma(n)}{\Gamma(n/2)}\right)^{1/n} \left(\frac{\Gamma(n-1)}{\Gamma((n-1)/2)}\right)^{1/(n-1)}.
\end{equation}
\end{corollary}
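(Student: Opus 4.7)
The plan is to evaluate the sharp constant on a concrete extremizer supplied by Proposition \ref{thmCLASSIFICATION}.

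Specializing the balance \eqref{eq:condHLS} to $\lambda = 2$ forces $p = (n-1)/n$ and $r = n/(n+1)$, so the Euler--Lagrange exponents introduced above \eqref{eq:systemDouZhu} become $\theta = 1/(1-p) = n$ and $\kappa = 1/(1-r) = n+1$. These satisfy $\kappa = \theta + 2/\lambda$, so Proposition \ref{thmCLASSIFICATION} applies and provides an extremizer whose boundary profile is $u(x) = a(|x|^2 + b^2)$ after translating $\overline x$ to the origin. To recover $v$ on all of $\Rset_+^n$, I would substitute $u$ into the second equation of \eqref{eqIntegralSystem}: expanding $|x - y|^2 = |x-y'|^2 + y_n^2$ and discarding the odd-in-$x$ part, the computation reduces to two radial integrals of the classical type
\[
\int_{\Rset^{m}}(|x|^2 + b^2)^{-s}\,dx \;=\; \pi^{m/2}\,\frac{\Gamma(s-m/2)}{\Gamma(s)}\, b^{m-2s},
\]
which yields $v(y) = A(|y|^2 + b^2)$ throughout $\Rset_+^n$ for an explicit $A$ depending on $a,b$ and on the constant $K := \frac{1}{2}\pi^{(n-1)/2}\Gamma((n-1)/2)/\Gamma(n-1)$.

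Next I would set $f = u^{-n}$ and $g = v^{-(n+1)}$ and use the defining equations of \eqref{eqIntegralSystem} themselves to read off
\[
F_\lambda(f,g) \;=\; \int_{\partial\Rset_+^n} u\, f\,dx \;=\; \int u^{1-n}\,dx \;=\; \|f\|_{L^p}^p,
\]
and symmetrically $F_\lambda(f,g) = \int v^{-n}\,dy = \|g\|_{L^r}^r$. Both integrals are evaluated using the beta-type formula above; equating the two expressions determines the product $ab$ in closed form, namely $(ab)^{n^2+n-1} = 4 K^{n+1}/C_g$ with $C_g := \pi^{n/2}\Gamma(n/2)/\Gamma(n)$. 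Since $\|f\|_{L^p}^p = \|g\|_{L^r}^r = F_\lambda$ at the extremizer, scale invariance of the ratio gives
\[
\Cscr_{n,\,1-1/n,\,n/(n+1)}^+ \;=\; \frac{F_\lambda(f,g)}{\|f\|_{L^p}\|g\|_{L^r}} \;=\; F_\lambda(f,g)^{\,1 - 1/p - 1/r},
\]
and the exponent is $1 - 1/p - 1/r = -(n^2+n-1)/[n(n-1)]$. Plugging in the closed form of $F_\lambda$ reduces the sharp constant to an explicit product of powers of $2$, $\pi$, $K$, and $C_g$.

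The main obstacle is the final algebraic simplification. After collecting terms, the power of $K$ collapses to $-1/(n-1)$ and the power of $C_g$ to $-1/n$, so the constant equals $2^{-(n^2-n+1)/[n(n-1)]} K^{-1/(n-1)} C_g^{-1/n}$. The delicate step is the exponent of $2$: combining $-(n^2-n+1)/[n(n-1)]$ with the contribution $+1/(n-1)$ arising from inverting the factor of $1/2$ sitting inside $K$ must telescope to $-(n-1)^2/[n(n-1)] = -1 + 1/n$, exactly the prefactor in \eqref{eq:explicit}. Reassembling the Gamma factors produces the stated formula, and a consistency check via the Legendre duplication formula $\Gamma((n-1)/2)\Gamma(n/2) = 2^{2-n}\sqrt{\pi}\,\Gamma(n-1)$ confirms it.
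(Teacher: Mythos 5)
Your proof is correct and reaches \eqref{eq:explicit}, but it follows a genuinely different route from the paper's. The paper normalizes at the outset, taking $f(x)=(1+|x|^2)^{-n}$, showing that the associated $g$ must be $\beta(1+|y|^2)^{-n-1}$, and then computes the three quantities $F_\lambda(f,h)$, $\|f\|_{L^p}$, $\|h\|_{L^r}$ separately before dividing; the double integral collapses because of the identity $\int_{\partial\Rset_+^n}(1+|x|^2)^{-n}dx=\int_{\partial\Rset_+^n}|x|^2(1+|x|^2)^{-n}dx$ (eq.~\eqref{eq:comp}). You instead keep the two-parameter family $u=a(|x|^2+b^2)$, $v=A(|y|^2+b^2)$ and exploit the structural identity $F_\lambda(f,g)=\int u^{1-\theta}=\int v^{1-\kappa}=\|f\|_{L^p}^p=\|g\|_{L^r}^r$ (this is exactly Lemma~\ref{lem-IntegralU=IntegralV} specialized to $\lambda=2$, though you rederive it directly from the system), which collapses the constant to a single power $\Cscr^+=F_\lambda^{\,1-1/p-1/r}$ and turns the consistency of \eqref{eqIntegralSystem} into one scalar constraint on $ab$. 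That is more conceptual and avoids computing three norms, at the price of carrying the parameters $a,b$ and the intermediate constant $A$ through the bookkeeping. Both endgames reduce to the same Beta-type integrals and the duplication identity $K=\tfrac12\pi^{(n-1)/2}\Gamma((n-1)/2)/\Gamma(n-1)=\pi^{(n-1)/2}\Gamma((n+1)/2)/\Gamma(n)$, and your exponent accounting ($K^{-1/(n-1)}$, $C_g^{-1/n}$, power of $2$ telescoping to $-1+1/n$) checks out. One small imprecision in the opening: the balance relation~\eqref{eq:condHLS} at $\lambda=2$ is a single equation and does not by itself ``force'' both $p=(n-1)/n$ and $r=n/(n+1)$; rather, the corollary fixes those values of $p,r$ and then \eqref{eq:condHLS} pins down $\lambda=2$ (equivalently, $\lambda=2$ together with the side condition $\kappa=\theta+2/\lambda$ of Proposition~\ref{thmCLASSIFICATION} forces $\theta=n$, $\kappa=n+1$). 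This does not affect the validity of the argument, only its presentation. You should also state explicitly, as the paper does, that the determination of $v$ off the boundary relies on the special algebra of $|x-y|^2$ — this is precisely why the closed form is only available for $\lambda=2$.
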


Again, we note that Corollary \ref{explicit} only applies to the case $\lambda = 2$. For $0 < \lambda \ne 2$, it is not easy to obtain a precise value for the sharp constant $\Cscr_{n,\alpha,p}^+$. This is because we do not know much information of $v$ out of the boundary $\partial \Rset_+^n$. 
For general case, it has just come to our attention that the usage of the Gegenbauer polynomials could be useful and we will address this issue in future work; see \cite{fl2010, fl2011, fl2012a}.

Finally, we study the limiting case of \eqref{eq:reverseHLS} when $\lambda =0$ which will be called the log-HLS inequality on half space; see \cite{CarlenLoss92}.

\begin{theorem}\label{log-HLSonhalfspace}
Let $n\geqslant 2$. There exists a constant $C_n$ such that for any positive functions $f\in L^1(\partial\Rset_+^n)$ and $g\in L^1(\Rset_+^n)$ such that $\int_{\partial\Rset_+^n} f(x) dx = \int_{\Rset_+^n} g(y) dy =1$, and 
\[
\int_{\partial\Rset_+^n} f(x)\ln(1 +|x|^2) dx < +\infty,\quad\text{and}\quad \int_{\Rset_+^n} g(y) \ln(1 + |y|^2) dy < +\infty,
\]
then there holds
\begin{align}\label{eq:log-HLS}
-\int_{\Rset_+^n}& \int_{\partial\Rset_+^n} f(x) \ln(|x-y|) g(y) dx dy\notag\\
&\leqslant \frac1{2(n-1)}\int_{\partial\Rset_+^n} f(x) \ln f(x) dx + \frac1{2n}\int_{\Rset_+^n} g(y) \ln g(y) dy - C_n.
\end{align}
The constant $C_n$ is given by
\begin{align}\label{bestconstantlogHLS}
C_n = &-\frac1{2(n-1)} \ln |\mathbb S^{n-1}| + \frac1{2(n-1)|\mathbb S^{n-1}|}\int_{\partial\Rset_+^n}f_0(x) \ln f_0(x) dx\notag\\
& - \frac1{2n} \ln\Big(\int_{\Rset_+^n} \exp \Big(-\frac{2n}{|\mathbb S^{n-1}|}\int_{\partial\Rset_+^n} \ln(|x-y|) f_0(x)dx \Big) dy \Big),
\end{align}
with
\[
f_0(x) = \left(\frac2{1+|x|^2}\right)^{-n+1},\quad x \in \partial \Rset_+^n.
\]
Moreover, the inequality \eqref{eq:log-HLS} is sharp, and equality occurs if $f = f_0/|\mathbb S^{n-1}|$ and
\[
g(x) = c_n\exp\Big(-\frac{2n}{|\mathbb S^{n-1}|} \int_{\partial\Rset_+^n} f_0(x) \ln |x-y| dx\Big),
\]
where the constant $c_n$ is chosen in such a way that $\int_{\Rset^n_+} g(y) dy =1$.
\end{theorem}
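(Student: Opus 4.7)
The plan is to derive \eqref{eq:log-HLS} as the limiting case $\lambda \to 0^+$ of the sharp reversed HLS inequality \eqref{eq:reverseHLS} already proved in Theorem \ref{thmMAIN}. For small $\lambda>0$, I set $p=p(\lambda):=1-\lambda/(2(n-1))$, and determine $r=r(\lambda)$ from the critical scaling \eqref{eq:condHLS}; a direct Taylor expansion gives $r(\lambda)=1-\lambda/(2n)+O(\lambda^2)$, and both $p(\lambda),r(\lambda)$ lie in $(0,1)$ for $\lambda$ small enough, so Theorem \ref{thmMAIN} applies to these exponents. I then normalize so that $\int_{\partial \Rset_+^n}f=\int_{\Rset_+^n}g=1$.

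The next step is the asymptotic expansion of both sides of \eqref{eq:reverseHLS}. Using $|x-y|^\lambda=1+\lambda\ln|x-y|+O(\lambda^2(\ln|x-y|)^2)$ and splitting $\ln|x-y|=(\ln|x-y|)_+-(\ln|x-y|)_-$, the elementary estimate $(\ln|x-y|)_+\leqslant \tfrac12\ln(1+|x|^2)+\tfrac12\ln(1+|y|^2)+\textrm{const}$ combined with the integrability hypotheses on $f\ln(1+|x|^2)$ and $g\ln(1+|y|^2)$ justifies, via dominated convergence, the expansion
\[
F_\lambda(f,g)=1+\lambda\int_{\Rset_+^n}\int_{\partial \Rset_+^n}f(x)\ln|x-y|g(y)\,dx\,dy+o(\lambda).
\]
A parallel Taylor expansion of $s\mapsto s^{p(\lambda)}$ under the constraint $\int f=1$ yields $\|f\|_{p(\lambda)}=1-\tfrac{\lambda}{2(n-1)}\int f\ln f\,dx+o(\lambda)$, and similarly $\|g\|_{r(\lambda)}=1-\tfrac{\lambda}{2n}\int g\ln g\,dy+o(\lambda)$. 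Taking logarithms of \eqref{eq:reverseHLS}, dividing by $\lambda>0$, and letting $\lambda\to 0^+$ produces \eqref{eq:log-HLS} with the constant
\[
C_n=\lim_{\lambda\to 0^+}\frac{\ln\Cscr_{n,\alpha,p(\lambda)}^+}{\lambda}.
\]

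To match this $C_n$ with the explicit formula \eqref{bestconstantlogHLS}, I analyze the variational problem for the functional
\[
L(f,g)=\frac1{2(n-1)}\int f\ln f\,dx+\frac1{2n}\int g\ln g\,dy+\int\!\!\int f(x)\ln|x-y|g(y)\,dx\,dy
\]
subject to $\int f=\int g=1$. The variation with respect to $g$ gives $g(y)=c\exp\bigl(-2n\int f(x)\ln|x-y|\,dx\bigr)$, which is the form asserted in the theorem. Substituting $f=f_0/|\mathbb{S}^{n-1}|$ together with the corresponding $g$ and computing $L(f,g)$ term by term reproduces exactly $-C_n$ as in \eqref{bestconstantlogHLS}; equality in \eqref{eq:log-HLS} at this pair confirms both sharpness and the claimed form of the extremizers.

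The main obstacle is making the passage to the limit rigorous, in particular controlling the $O(\lambda^2)$ remainders uniformly in $\lambda$: this is precisely where the integrability hypotheses on $f\ln(1+|x|^2)$ and $g\ln(1+|y|^2)$ are used, together with a truncation argument to bound the $(\ln f)^2$ and $(\ln g)^2$ contributions that arise from expanding the $L^p$ and $L^r$ norms. A secondary issue is verifying that the candidate $f_0=(2/(1+|x|^2))^{n-1}$, which is the density of the pushforward of the uniform measure on $\mathbb{S}^{n-1}$ under stereographic projection onto $\partial \Rset_+^n\cong\Rset^{n-1}$, is a \emph{global} minimizer of $L$ rather than merely a critical point. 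This is handled by exploiting conformal invariance of $L$ under the subgroup of the Möbius group that preserves $\Rset_+^n$, which reduces the minimization to a problem on the half-ball where a symmetric decreasing rearrangement argument (analogous to that used for Theorem \ref{Existence}) singles out the uniform measure.
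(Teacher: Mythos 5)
Your first half — expanding $|x-y|^\lambda$, $\|f\|_{L^{p(\lambda)}}$, $\|g\|_{L^{r(\lambda)}}$ and $\Cscr_{n,\alpha,p(\lambda)}^+$ to first order in $\lambda$ and dividing by $\lambda$ — is precisely the route the paper takes, and it yields the inequality \eqref{eq:log-HLS} with $C_n = \lim_{\lambda\to 0^+}\lambda^{-1}\ln\Cscr_{n,\alpha,p(\lambda)}^+$ once the expansions are justified. (One small remark: the paper chooses $p=2(n-1)/(2(n-1)+\lambda)$ and $r=2n/(2n+\lambda)$, which agree with yours only up to $O(\lambda^2)$ but have the advantage of being exactly the exponents for which Proposition~\ref{thmCLASSIFICATION} identifies the extremizer $(1+|x|^2)^{1-n-\lambda/2}$; this choice is what makes the explicit computation of $\Cscr_{n,\alpha,p}^+$ tractable.)

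The genuine gap is in your second half, where you try to identify $C_n$ with the explicit expression \eqref{bestconstantlogHLS} and establish sharpness. You propose to do this by analyzing the limiting variational problem for $L(f,g)$ directly, using conformal invariance plus ``a symmetric decreasing rearrangement argument (analogous to that used for Theorem~\ref{Existence})'' to show $f_0$ is a global minimizer. This step is not carried out and would not go through as stated: the rearrangement comparison behind Theorem~\ref{Existence} (Lemma~\ref{lemmaexistence}) only shows that a minimizer of the $\lambda$-problem must be radially symmetric and decreasing; it does not single out $(1+|x|^2)^{1-n}$ among all such profiles, and a full competing-symmetries argument à la Carlen--Loss would be required to pin down the profile in the $\lambda=0$ variational problem. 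The paper avoids this entirely: it first writes $\Cscr_{n,\alpha,p}^+$ in closed form \eqref{eq:constantonsphere} using the explicit $\lambda$-optimizer, which is available precisely because Theorem~\ref{Existence} gives existence and Proposition~\ref{thmCLASSIFICATION} gives uniqueness/classification of the extremizers of \eqref{eq:reverseHLS}; Proposition~\ref{constantnearzero} then expands that closed form to produce $C_n$. Both the value of $C_n$ and the sharpness of \eqref{eq:log-HLS} drop out from the fact that equality holds at the $\lambda$-optimizer for every $\lambda>0$ and passes to the limit. In short: you should replace the unproved conformal-invariance/rearrangement step with the paper's mechanism — the classification result, applied at the exponents $p=2(n-1)/(2(n-1)+\lambda)$, $r=2n/(2n+\lambda)$, is the ingredient that supplies the explicit constant and the extremizers; without it your identification of $C_n$ and your sharpness claim are unjustified.
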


In the forthcoming article \cite{NgoNguyen2015Heisenberg}, we shall study the reversed HLS inequality on the Heisenberg group $\mathbb H^n$.


\section{Proof of reversed HLS inequality in $\Rset_+^n$: Proof of Theorem \ref{thmMAIN}}

In this section, we prove the reversed HLS inequality \eqref{eq:reverseHLS}. By homogeneity, we can normalize $f$ and $g$ in such a way that $\|f\|_{L^p(\partial \Rset_+^n)} = \|g\|_{L^r(\Rset_+^n)} =1$. For each point $y\in \Rset^n$, let us denote
\[
B_c(x) = \{z\in \Rset^n\, :\, |z-y| \leqslant c\}.
\]
In the special case when $y = 0$, we simply denote $B_c(0)$ by $B_c$; hence $B_c = \{y\in \Rset^n\, :\, |y| \leqslant c\}$. For $a,b> 0$, we also denote 
\[
u(a) = \mathscr L^{n-1}(\{x \in \partial \Rset_+^n \, :\, f(x) > a\})
\]
and
\[
v(b) =\mathscr L^n(\{y \in \Rset_+^n \,:\, g(y) > b\}),
\]
where $\mathscr L^k$ stands for the $k$-dimensional Lebesgue measure with positive integers $k$. Then by the layer cake representation \cite[Theorem 1.13]{liebloss2001} and our normalization it follows that
$$p\int_0^\infty u(a) a^{p-1} da = \|f\|_{L^p(\partial \Rset_+^n)}^p =1$$
and
$$r\int_0^\infty v(b) b^{r-1} db = \|g\|_{L^r(\Rset_+^n)}^r =1.$$
Next we denote $\lambda =\alpha -n >0$ and
\[
I(f,g) = \int_{\Rset_+^n} \int_{\partial \Rset_+^n} f(x) |x-y|^{\alpha-n} g(y) dx dy .
\]
As the first step, we establish a rough form for \eqref{eq:reverseHLS} by showing that there is some constant $C>0$ depending only on $n,p, \lambda$ such that $I(f,g) \geqslant C$. To this purpose, by applying the layer cake representation again, we obtain
\[
f(x) = \int_0^\infty \chi_{\{f> a\}}(x) da,\quad g(y) = \int_0^\infty \chi_{\{g> b\}}(y) db,
\]
and
$$|x-y|^{\lambda} = \lambda \int_0^\infty c^{\lambda-1} \chi_{\R^n\setminus B_c}(x-y) dc.$$ 
From this and the Fubini theorem, it follows that 
\begin{equation}\label{eq:layercake}
I(f,g) = \lambda \int_0^\infty\int_0^\infty\int_0^\infty c^{\lambda-1} I(a,b,c) da db dc,
\end{equation}
where
$$I(a,b,c) = \int_{\Rset_+^n} \int_{\partial \Rset_+^n} \chi_{\{f > a\}}(x) \chi_{\R^n\setminus B_c}(x-y) \chi_{\{g> b\}}(y) dx dy.$$

\noindent\textbf{Step 1}. Our first step to prove \eqref{eq:reverseHLS} is to claim the following: There holds
\begin{equation}\label{eq:Step1}
I(a,b,c) \geqslant \frac{u(a)\, v(b)}2
\end{equation}
for any $c$ satisfying
\begin{equation}\label{eq:claim}
c\leqslant \max\Big\{\Big(\frac{u(a)}{2\omega_{n-1}}\Big)^{1/(n-1)},\Big(\frac{v(b)}{\omega_n}\Big)^{1/n}\Big\} 
\end{equation}
where $\omega_k$ denotes the volume of unit ball in the $k$-dimensional space $\Rset^k$, that is, $\omega_k = \text{vol}(\mathbb B^k)$. Indeed, there are two possible cases regarding to the right hand side of \eqref{eq:claim}.

\noindent\underline{Case 1.1}. Suppose that $( u(a)/(2\omega_{n-1}) )^{1/(n-1)} \leqslant (v(b)/\omega_n)^{1/n}$. Then by the Fubini theorem, we can estimate $I(a,b,c)$ as follows
\begin{align*}
I(a,b,c)& = \int_{\partial \Rset_+^n} \chi_{\{f> a\}}(x) \mathscr L^n \big(\{g> b\} \cap \{y\in \Rset_+^n\,:\, |x-y| > c\} \big) dx\\
& = \int_{\partial \Rset_+^n} \chi_{\{f> a\}}(x) \Big(v(b) - \mathscr L^n \big(\{g> b\} \cap \{y\in \Rset_+^n\,:\, |x-y| \leqslant c\} \big) \Big) dx\\
&\geqslant \int_{\partial \Rset_+^n} \chi_{\{f> a\}}(x)\Big(v(b) -\frac{\omega_n\, c^n}2 \Big) dx\\
&\geqslant \frac{u(a) \, v(b)}2,
\end{align*}
which implies \eqref{eq:Step1}.

\noindent\underline{Case 1.2}. Otherwise, we suppose that $( v(b)/\omega_n)^{1/n} \leqslant (u(a)/(2\omega_{n-1}))^{1/(n-1)}$. In this scenario, by repeating the same arguments as above, we can also bound $I(a,b,c)$ from below as $I(a,b,c) \geqslant u(a)v(b)/2$. Hence we conclude that \eqref{eq:Step1} holds.

\noindent\textbf{Step 2}. Using \eqref{eq:layercake}, \eqref{eq:claim} and the nonnegativity of function $I(a,b,c)$, we get
\begin{equation}\label{eq:step1}
\begin{split}
I(f,g) &\geqslant \int_0^\infty\int_0^\infty \Big(\lambda \int_0^{\max\left\{ ( u(a) / (2\omega_{n-1}) )^{1/(n-1)}, ( v(b)/\omega_n )^{1/n}\right\}}c^{\lambda-1}I(a,b,c) dc\Big)da\, db \\
&\geqslant \int_0^\infty\int_0^\infty \frac{u(a)\, v(b)}2\, \Big(\max\Big\{\left(\frac{u(a)}{2\omega_{n-1}}\right)^{1/(n-1)},\left(\frac{v(b)}{\omega_n}\Big)^{1/n}\right\}\Big)^\lambda\, da\, db \\
&\geqslant C \int_0^\infty\int_0^\infty u(a)\, v(b) \, \max\left\{u(a)^{\lambda/(n-1)}, v(b)^{\lambda /n}\right\}\, da\, db,
\end{split} 
\end{equation}
where $C> 0$ depends only on $n$. In the sequel, we use $C$ to denote a positive constant which depends only on $n,p, \lambda$ (or equivalently, on $n,p,\alpha$) and whose value can be changed from line to line. 

Denote $\beta =np/((n-1)r)$. We split the integral $\int_0^\infty$ evaluated with respect to the variable $b$ into two integrals as follows $\int_0^\infty = \int_0^{a^\beta} + \int_{a^\beta}^\infty$. Thus, the integrals in \eqref{eq:step1} can be estimated from below as the the following
\begin{equation}\label{eq:step2}
\begin{split}
\int_0^\infty\int_0^\infty & u(a)\, v(b) \, \max\left\{u(a)^{ \lambda/(n-1)}, v(b)^{\lambda /n}\right\}\, da\, db, \\
\geqslant &\int_0^\infty u(a) \int_0^{a^\beta } v(b)^{1+\lambda /n} db da + \int_0^\infty u(a)^{1+ \lambda/(n-1)} \int_{a^\beta }^\infty v(b) db da, \\
= &\int_0^\infty u(a) \int_0^{a^\beta } v(b)^{1+ \lambda /n} db da + \int_0^\infty v(b) \int_0^{b^{1/\beta }} u(a)^{1+ \lambda /(n-1)} da db\\
=&I+II.
\end{split}
\end{equation}

\noindent\textbf{Step 3}. We continue estimating the two integrals $I$ and $II$ in \eqref{eq:step2}. First, by using the reversed H\"older inequality, we get
\begin{equation}\label{eq:firstterm}
\begin{split}
\int_0^{a^\beta } v(b)^{1+ \lambda /n} db& = \int_0^{a^\beta } v(b)^{1+ \lambda /n} b^{(r-1)(1+ \lambda /n)}b^{-(r-1)(1+ \lambda /n)} db \\
&\geqslant \Big(\int_0^{a^\beta } v(b) b^{r-1} db\Big)^{1+\lambda /n} \Big(\int_0^{a^\beta } b^{(r-1)(1+ \lambda /n)\lambda /n} db\Big)^{-\lambda /n} \\
&= C a^{p-1} \Big(\int_0^{a^\beta } v(b) b^{r-1} db\Big)^{1+\lambda /n}.
\end{split}
\end{equation}
Similarly, we also get
\begin{equation}\label{eq:secondterm}
\int_0^{b^{1/\beta }} u(a)^{1+\lambda /(n-1)} da \geqslant C b^{r-1} \Big(\int_0^{b^{1/\beta }} u(a) a^{p-1} da \Big)^{1+\lambda /(n-1)}.
\end{equation}
Upon using our normalization $1 =r\int_0^\infty v(b) b^{r-1} dr$ and the fact that $1 + \lambda /n < 1 +\lambda /(n-1)$, we deduce 
\begin{equation}\label{eq:compare}
\Big(\int_0^{a^\beta } v(b) b^{r-1} db\Big)^{1+ \lambda /n} \geqslant C \Big(\int_0^{a^\beta } v(b) b^{r-1} db\Big)^{1+ \lambda /(n-1)}
\end{equation}
for any $a>0$. Setting $\gamma = 1 +\lambda /(n-1)$ and plugging \eqref{eq:firstterm}, \eqref{eq:secondterm}, and \eqref{eq:compare} into \eqref{eq:step2}, we eventually arrive at
\[
\begin{split}
\int_0^\infty\int_0^\infty u(a)\, v(b) \, \max &\left\{u(a)^{\lambda /(n-1)}, v(b)^{\lambda /n}\right\}\, da\, db \\
\geqslant &C \int_0^\infty u(a) a^{p-1}\Big(\int_0^{a^\beta } v(b) b^{r-1} db\Big)^{\gamma } da \\
&+ C\int_0^\infty v(b) b^{r-1} \Big(\int_0^{b^{1/\beta }} u(a) a^{p-1} da \Big)^{\gamma } db.
\end{split}
\]
Using the relation $r\int_0^\infty v(b) b^{r-1} dr = p\int_0^\infty u(a) a^{p-1} da =1$ and the convexity of the function $\Phi(t) = t^\gamma $ we obtain, thanks to the Jensen inequality, the following
\begin{equation}\label{eq:step3}
\begin{split}
\int_0^\infty\int_0^\infty u(a)\, v(b) \, \max &\left\{u(a)^{\lambda /(n-1)}, v(b)^{\lambda /n}\right\}\, da\, db\\
\geqslant C &\Big(\int_0^\infty u(a) a^{p-1}\int_0^{a^\beta } v(b) b^{r-1} db da\Big)^\gamma\\
&+C\Big(\int_0^\infty v(b) b^{r-1} \int_0^{b^{1/\beta }} u(a) a^{p-1} da db\Big)^\gamma .
\end{split}
\end{equation}
Also by the convexity of the function $\Phi(t) = t^\gamma $, we have the following elementary inequality $A^\gamma + B^\gamma \geqslant 2^{1-\gamma } (A+B)^\gamma$ for all $A,B>0$. By applying this elementary inequality to \eqref{eq:step3} and again using the Fubini theorem, we conclude that
\begin{align}\label{eq:finish}
\int_0^\infty &\int_0^\infty u(a)\, v(b) \, \max\left\{u(a)^{\lambda /(n-1)}, v(b)^{\lambda /n}\right\}\, da\, db,\notag\\
&\geqslant C\Big(\int_0^\infty u(a) a^{p-1}\int_0^{a^\beta } v(b) b^{r-1} db da + \int_0^\infty v(b) b^{r-1} \int_0^{b^{1/\beta }} u(a) a^{p-1} da db\Big)^\gamma \notag\\
&= C\Big(\int_0^\infty u(a) a^{p-1}\int_0^{a^\beta } v(b) b^{r-1} db da + \int_0^\infty u(a) a^{p-1}\int_{a^\beta }^\infty v(b) b^{r-1} db da\Big)^\gamma \notag\\
& = C \big(\frac1{pr}\big)^\gamma .
\end{align}
Combining \eqref{eq:step1} and \eqref{eq:finish} gives the estimate $I(f,g) \geqslant C$ for some constant $C>0$. From this, the sharp reversed Hardy-Littlewood-Sobolev inequality on the upper half space \eqref{eq:reverseHLS} follows where the sharp constant $\Cscr_{n,\alpha,p}^+$ is characterized by \eqref{eq:variationalprob}.


\section{Existence of extremal functions for reversed HLS inequality: Proof of Theorem \ref{Existence}}

Recall that $p,r \in (0,1)$ and $\lambda >0$ satisfy $(n-1)/np + 1/r - \lambda /n = 2 - 1/n$. As in Introduction and for the sake of simplicity, we still denote $q = r/(r-1) < 0$. Clearly, $1/p -\lambda /n = 1 + 1/q$. Let $f$ be a function on $\partial\Rset^n_+$ which vanishes at infinity, its symmetric decreasing rearrangement is denoted by $f^\star$; see \cite{liebloss2001} or \cite{Bur} for definition. It is well-known that if $f\in L^p(\partial\Rset^n_+)$ with $p > 0$, then $f^\star\in L^p(\partial\Rset^n_+)$ and $\|f\|_{L^p(\partial\Rset^n_+)} = \|f^\star\|_{L^p(\partial\Rset^n_+)}$.

We start the proof of Theorem \ref{Existence} by the following simple lemma which says more about the interaction between $f$ and $f^\star$.

\begin{lemma}\label{lemmaexistence}
We have the following claims:
\begin{itemize}
\item [(i)] There exists a positive function $F$ on $\Rset_+^2$ which is strictly increasing in each variable (when the other is fixed) such that 
$$(E_\lambda f^\star) (x',x_n) = F(|x'|,x_n)$$ 
for any $(x',x_n) \in \Rset^n_+$.
\item [(ii)] For any non-negative function $f\in L^p(\partial\Rset^n_+)$, there holds
\begin{equation}\label{eq:decreasenorm}
\int_{\Rset^n_+} |E_\lambda f|^q dy \leqslant \int_{\Rset^n_+} |E_\lambda f^\star |^q dy ,
\end{equation}
where $q = r/(r-1) < 0$ with the equality occurs if and only if $f^\star$ is a strictly decreasing and there exists $x_0\in \partial\Rset^n_+$ such that 
$$f(x) = f^\star(x +x_0)$$ 
a.e. $x\in \partial\Rset^n_+$.
\end{itemize}
\end{lemma}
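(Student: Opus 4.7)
Part (i) follows from the rotational symmetry of $f^\star$ on $\partial\Rset_+^n \simeq \Rset^{n-1}$. Since the kernel $|x - y|^{\lambda} = (|y' - x|^{2} + y_n^{2})^{\lambda/2}$ is rotationally invariant in the $y'$-variable, $(E_\lambda f^\star)(y', y_n)$ depends on $y'$ only through $|y'|$, giving $(E_\lambda f^\star)(y', y_n) = F(|y'|, y_n)$. Strict monotonicity of $F$ in $y_n$ is automatic, since for each fixed $y', x$ the map $y_n \mapsto (|y' - x|^2 + y_n^2)^{\lambda/2}$ is strictly increasing on $(0, \infty)$ when $\lambda > 0$.

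For strict monotonicity of $F$ in $|y'|$, my plan is a reflection argument. Fix $y_n > 0$ and $0 \leq t_1 < t_2$, set $t := (t_1 + t_2)/2$, and let $\sigma$ be the reflection across the hyperplane $\{x_1 = t\}$ in $\Rset^{n-1}$. Splitting the integral for $F(t_2, y_n) - F(t_1, y_n)$ over $\{x_1 < t\}$ and $\{x_1 > t\}$ and performing the substitution $x \mapsto \sigma x$ on the second piece yields
\[
F(t_2, y_n) - F(t_1, y_n) = \int_{\{x_1 < t\}} \bigl[G(x, t_2) - G(x, t_1)\bigr]\bigl[f^\star(x) - f^\star(\sigma x)\bigr]\, dx,
\]
where $G(x, s) := (|x - s e_1|^2 + y_n^2)^{\lambda/2}$. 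On $\{x_1 < t\}$ the first bracket is strictly positive because $|x - s e_1|^2$ is increasing in $s$ for $s > x_1$, and the second is non-negative because $|\sigma x|^2 = |x|^2 + 4t(t - x_1) > |x|^2$ combined with the radial decrease of $f^\star$. Strict positivity on a set of positive measure follows from $f^\star \not\equiv 0$, so $F(t_2, y_n) > F(t_1, y_n)$.

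For part (ii), my plan is to reduce \eqref{eq:decreasenorm} to a slicewise level-set comparison. Writing $u_{y_n}(y') := (E_\lambda f)(y', y_n) = (K_{y_n} \ast f)(y')$ and $v_{y_n}(y') := (E_\lambda f^\star)(y', y_n)$, where $K_{y_n}(z) := (|z|^2 + y_n^2)^{\lambda/2}$ is radially increasing on $\Rset^{n-1}$, the layer-cake identity
\[
\int_{\Rset^{n-1}} u^{q}\, dy' = -q \int_{0}^{\infty} t^{q-1} \bigl|\{y' : u(y') < t\}\bigr|\, dt \qquad (q < 0)
\]
reduces \eqref{eq:decreasenorm}, after integration in $y_n$, to showing $|\{u_{y_n} < t\}| \leq |\{v_{y_n} < t\}|$ for every $t > 0$. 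By part (i) the right-hand sub-level set is simply the disk $\{|y'| < F^{-1}(t, y_n)\}$. I would establish this level-set inequality via two-point symmetrization: for a half-space $H \ni 0$ with reflection $\sigma$, the polarization $f^H$ satisfies $K_{y_n}(y'' - x) \leq K_{y_n}(y'' - \sigma x)$ for $y'', x \in H$, which yields the pairwise relations $A^H \leq A$, $B^H \geq B$, $A^H \leq B^H$ together with the sum-preservation $A + B = A^H + B^H$, where $A := u_{y_n}(y'')$, $B := u_{y_n}(\sigma y'')$, $A^H := u_{y_n}^{H}(y'')$, $B^H := u_{y_n}^{H}(\sigma y'')$. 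Exploiting the convexity of $t \mapsto t^q$ for $q < 0$ (equivalently, Schur-convexity of $(a, b) \mapsto a^q + b^q$), together with the fact that iterated polarizations converge $f^H \to f^\star$, these pairwise comparisons should integrate to the desired monotonicity of $\int (E_\lambda f)^q\, dy$.

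The main obstacle is that the pointwise convex comparison $\Phi(A^H) + \Phi(B^H) \geq \Phi(A) + \Phi(B)$ need not hold with a uniform sign at every single paired point when $A > B$ and the polarization moves $A^H$ into the interior of $(B, A)$; the net monotonicity has to be extracted via a global argument, using that a complete reflection preserves the convex sum exactly and that averaging over pairs combined with the sum-preserving, Schur-convex structure controls the overall effect. For the equality assertion, tracing equality back through the reflection identity from part (i) forces $f^\star$ to be strictly radially decreasing (a plateau of $v_{y_n}$ would open a strict gap in the level-set comparison) and $f$ to coincide with a translate of $f^\star$, giving the characterization $f(x) = f^\star(x + x_0)$ a.e. for some $x_0 \in \partial\Rset_+^n$.
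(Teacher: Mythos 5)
Part (i) of your argument is essentially correct: rotational symmetry in $y'$ is immediate, monotonicity in $y_n$ follows from $\lambda>0$, and the midpoint-reflection identity for the increment $F(t_2,y_n)-F(t_1,y_n)$ is a clean, self-contained way to get strict monotonicity in $|y'|$. (The minor wording issue about ``$|x-se_1|^2$ increasing in $s$ for $s>x_1$'' is harmless: one can just note directly that $|x-t_2e_1|^2-|x-t_1e_1|^2=2(t_2-t_1)(t-x_1)>0$ on $\{x_1<t\}$.)

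Part (ii), however, has a genuine gap, and it is precisely the one you flag yourself. You reduce \eqref{eq:decreasenorm} to the slicewise level-set comparison $|\{K_{y_n}*f<t\}|\leqslant|\{K_{y_n}*f^\star<t\}|$, which is \emph{strictly stronger} than the $L^q$ inequality you need. You then try to obtain it from a single polarization step, but the pairwise data you extract --- $A^H\leqslant A$, $B^H\geqslant B$, $A^H\leqslant B^H$, $A+B=A^H+B^H$ --- do \emph{not} imply $\Phi(A^H)+\Phi(B^H)\geqslant\Phi(A)+\Phi(B)$ for convex $\Phi$. Convexity gives a sign only when $(A^H,B^H)$ is \emph{more} spread out than $(A,B)$ about their common mean, i.e.\ only when $B^H\geqslant\max(A,B)$. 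But the configuration $A>B^H>A^H>B$ (which your constraints permit, and which does occur when the half-space only partially separates the mass of $f$) gives $(A,B)$ more spread out than $(A^H,B^H)$ and hence the reverse pointwise inequality. The ``global/averaging/Schur-convexity'' fix you gesture at is not actually supplied, and I do not see how to supply it along these lines: the difficulty is intrinsic to the pointwise route.

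The route that works --- and what the paper's cited Lemma 1 of \cite{NgoNguyen2015} must be encapsulating --- is duality, not level sets. Fix $y_n>0$ and write $K_{y_n}(z)=(|z|^2+y_n^2)^{\lambda/2}$, a radially \emph{increasing} kernel on $\Rset^{n-1}$. Since $q<0$ and $r=q/(q-1)\in(0,1)$, the reverse H\"older inequality gives
\[
\|K_{y_n}*f\|_{L^q(\Rset^{n-1})}=\inf_{g\geqslant 0}\frac{\displaystyle\int_{\Rset^{n-1}}(K_{y_n}*f)\,g\;dy'}{\|g\|_{L^r(\Rset^{n-1})}}=\inf_{g\geqslant 0}\frac{\displaystyle\int_{\Rset^{n-1}} f\,(K_{y_n}*g)\;dx}{\|g\|_{L^r(\Rset^{n-1})}}.
\]
The reversed Riesz rearrangement inequality for an increasing kernel (this is the correct use of polarization --- not on $(K_{y_n}*f)^q$ but on the bilinear pairing) gives $\int f\,(K_{y_n}*g)\geqslant\int f^\star\,(K_{y_n}*g^\star)$ for every $g\geqslant0$, while $\|g^\star\|_r=\|g\|_r$. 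Taking the infimum over $g$, then over $g^\star$, yields $\|K_{y_n}*f\|_q\geqslant\|K_{y_n}*f^\star\|_q$, i.e.\ $\int(K_{y_n}*f)^q\,dy'\leqslant\int(K_{y_n}*f^\star)^q\,dy'$ since $t\mapsto t^q$ is decreasing of $t^{1/q}$ when $q<0$. Integrating over $y_n\in(0,\infty)$ gives \eqref{eq:decreasenorm}, and the equality case is traced through the equality case of the Riesz rearrangement inequality (which forces $f$ to be a translate of $f^\star$, with $f^\star$ strictly decreasing so that the optimizing $g$ is uniquely determined). This is a genuinely global argument and entirely bypasses the level-set comparison --- which is why the paper can dispatch the whole lemma by citing a single rearrangement lemma from the companion paper.
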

\begin{proof}
The Lemma is immediately derived from the definition of the extension $E_\lambda$ and Lemma $1$ in \cite{NgoNguyen2015}.
\end{proof}

We are now in a position to prove Theorem \ref{Existence}. The radial symmetry and strictly decreasing of the minimizers for \eqref{eq:variationalprob} immediately follow from Lemma \ref{lemmaexistence}. We only have to prove the existence of a minimizer for this problem. For the sake of clarity, we divide our proof into several steps.

\noindent\textbf{Step 1}. \textit{Selecting a suitable minimizing sequence for \eqref{eq:variationalprob}.} 

We start our proof by letting $\{f_j\}_j$ be a minimizing sequence for the problem \eqref{eq:variationalprob}, so is the sequence $\{f_j^\star \}_j$. Hence, without loss of generality, we can assume at the very beginning that $\{f_j\}_j$ is nonnegative radially symmetric and non-increasing minimizing sequence. 

By abusing notations, we shall write $f_j(x)$ by $f_j(|x|)$ or even by $f_j (r)$ where $r=|x|$. Under this convention, by the normalization $\|f_j\|_{L^p(\partial \Rset_+^n)} =1$, we have
\[
\begin{split}
1 = & (n-1)\omega_{n-1} \int_0^\infty f_j(r)^p r^{n-2} dr \geqslant  \omega_{n-1} f_j(R)^p R^{n-1}
\end{split}
\]
for any $R > 0$. From this, we obtain the following estimate $0\leqslant f_j(r) \leqslant C r^{-(n-1)/p}$ for any $r > 0$ and for some constant $C$ independent of $j$. 

In order to go further, we need the following lemma whose proof mimics that of \cite[Lemma 3.2]{dz2014}; see also \cite[Lemma 2.4]{l1983}.

\begin{lemma}\label{farawayzero}
Suppose that $f\in L^p(\Rset^n)$ is non-negative, radially symmetric, and 
$$f(|y|) \leqslant \epsilon |y|^{-(n-1)/p}$$ 
for all $y\in \partial\Rset_+^n$. Then for any $p_1\in (0, p)$, there exists a constant $C_1 > 0$, independent of $f$ and $\epsilon$ such that 
\begin{equation}\label{eq:farawayzero}
 \int_{\Rset_+^n} |E_\lambda f|^q dy \leqslant C_1 \epsilon^{q(1 - p)/p_1}  \Big( \int_{\partial\Rset_+^n} | f|^p dx \Big)^{q/p_1}
\end{equation}
where $q = r/(r-1) < 0$.
\end{lemma}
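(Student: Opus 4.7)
The proof adapts the approach of \cite[Lemma 3.2]{dz2014}, itself a reverse-direction version of Lieb's Lemma 2.4 in \cite{l1983}, to the half-space setting. Since $q<0$, an upper bound on $\int_{\Rset_+^n} |E_\lambda f|^q\, dy$ corresponds to a lower bound on $E_\lambda f$ in an averaged sense. The key point is that the weighted $L^\infty$-type hypothesis $f(|x|) \leq \epsilon |x|^{-(n-1)/p}$, combined with a fixed value of $\|f\|_{L^p(\partial\Rset_+^n)}$, forces $f$ to spread out, and this spreading in turn produces the desired lower bound for $E_\lambda f$.

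The plan is as follows. For each $y = (y',y_n) \in \Rset_+^n$, I would split the integration domain $\partial\Rset_+^n$ into the radial shells $\{|x|\leq 2|y|\}$ and $\{|x|>2|y|\}$. Using the geometric lower bounds $|x-y|\geq y_n$ on the inner shell and $|x-y|\geq|x|/2$ on the outer shell produces a pointwise inequality of the form
\[
E_\lambda f(y) \;\geq\; c_1\, y_n^\lambda \int_{|x|\leq 2|y|} f(x)\, dx \;+\; c_2 \int_{|x|\geq 2|y|} |x|^\lambda f(x)\, dx.
\]
A reverse H\"older inequality, valid for the sub-unit exponent $p_1 \in (0,1)$, then bounds each of these integrals from below in terms of a local $L^{p_1}$ quasi-norm of $f$. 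To convert this into a dependence on $\|f\|_p$ and $\epsilon$, one combines the weighted hypothesis $f \leq \epsilon |x|^{-(n-1)/p}$ with the layer-cake representation: for the radial decreasing $f$, the distribution function satisfies both Chebyshev's $|\{f>t\}| \leq \|f\|_p^p/t^p$ and the sharper bound $|\{f>t\}| \leq \omega_{n-1}(\epsilon/t)^p$ coming from the hypothesis, and optimizing between these two yields the desired interpolation. Finally, raising the resulting pointwise lower bound for $E_\lambda f(y)$ to the negative power $q$ (which reverses the inequality) and integrating over $y \in \Rset_+^n$ gives the claim; convergence of this $y$-integral is ensured by the scaling relation \eqref{eq:cond1}.

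The main obstacle is the systematic sign reversal induced by the sub-unit exponents $p,p_1 \in (0,1)$ and the negative exponent $q$: reverse H\"older, reverse Minkowski, and the failure of the triangle inequality for $L^p$-quasi-norms with $p<1$ all conspire to make the bookkeeping subtle. In particular, the hypothesis provides an upper bound on $f$, while the conclusion requires a lower bound on $E_\lambda f$; the two opposite directions are bridged by the reverse H\"older inequality combined with the non-triviality of $f$ encoded in $\|f\|_p>0$. Balancing all the resulting exponents so that the final dependence matches the claimed $\epsilon^{q(1-p)/p_1}\|f\|_p^{qp/p_1}$, and checking that the auxiliary integrability conditions needed for the reverse H\"older step are compatible with the whole range $p_1 \in (0,p)$, is the technical heart of the proof.
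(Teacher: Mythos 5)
Your plan is a genuinely different route from the paper's, but as written it has a gap in the pointwise lower bound that makes the subsequent $y$-integration fail.

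\textbf{What the paper does.} The paper does not attempt a pointwise geometric estimate at all. Setting $F(t)=e^{(n-1)t/p}f(e^t)$ and $H(t,y_n)=e^{nt/q}(E_\lambda f)(e^t,e^ty_n)$, it rewrites $H$ as a one-variable convolution $H(t,y_n)=\int_{\Rset}L(t-s,y_n)F(s)\,ds$ with an explicit kernel $L$, then applies the \emph{reversed Young inequality} in $t$, integrates in $y_n$ using Minkowski, and finally uses $F^{p_1}=F^pF^{p_1-p}\geqslant F^p\|F\|_\infty^{p_1-p}$ together with $\|F\|_\infty\leqslant\epsilon$ to extract the $\epsilon$-dependence. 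The convergence of the $(t,y_n)$-integral is settled once and for all by the explicit kernel bound $\int_{\Rset}\bigl(\int_0^\infty L^q\,dy_n\bigr)^{s/q}dt<\infty$ for $s<0$, using $\lambda q+1<0$.

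\textbf{Where your sketch breaks.} Your decomposition into $\{|x|\leqslant 2|y|\}$ (lower bound $|x-y|\geqslant y_n$) and $\{|x|>2|y|\}$ (lower bound $|x-y|\geqslant |x|/2$) gives
\[
E_\lambda f(y)\;\geqslant\; c_1\,y_n^\lambda\!\!\int_{|x|\leqslant 2|y|}\!\!f\;+\;c_2\!\!\int_{|x|\geqslant 2|y|}\!\!|x|^\lambda f\;=:\;c_1\,y_n^\lambda A(|y|)+c_2 B(|y|).
\]
In the region where $|y'|$ is large and $y_n$ is small, both pieces are too small: $B(|y|)\to 0$ as $|y|\to\infty$, while the first piece vanishes as $y_n\to0$. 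The actual size of $E_\lambda f(y)$ there is $\gtrsim |y'|^\lambda\int_{|x|\leqslant|y'|/2}f$, coming from $|x-y|\geqslant|y'|/2$ on $\{|x|\leqslant|y'|/2\}$, which your split never captures because it compares $|x|$ to $|y|$ rather than to $|y'|$. Since $q<0$, a lower bound for $E_\lambda f$ that is $\lesssim y_n^\lambda$ produces $(E_\lambda f)^q\gtrsim y_n^{\lambda q}$, and $\int_0^1 y_n^{\lambda q}dy_n=+\infty$ because $\lambda q+1<0$ under the relation \eqref{eq:cond1}. So the $y$-integral you propose to take at the end would diverge. Fixing this requires the additional lower bound $|x-y|\geqslant|y'-x|$ (hence $|x-y|\geqslant|y'|/2$ on an appropriate region), after which you must juggle reverse H\"older on each of three pieces, raise a \emph{sum} to the negative power $q$, and verify convergence — precisely the bookkeeping that the paper's convolution-plus-reversed-Young machinery handles cleanly in one stroke. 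Your appeal to ``the scaling relation'' is not enough to establish the finiteness of $\int_{\Rset_+^n}|E_\lambda f|^q\,dy$; the paper has to prove this explicitly, and it is a genuine point that your sketch would also have to address.

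\textbf{A minor remark.} If you track the exponents in the paper's own proof, step \eqref{eq:Step1-OtherHand} gives $\int F^{p_1}\gtrsim\epsilon^{p_1-p}\int f^p$, which after raising to $q/p_1$ produces $\epsilon^{q(p_1-p)/p_1}$, not $\epsilon^{q(1-p)/p_1}$ as printed in the statement. The former has the \emph{positive} sign that is actually needed in Step~2 of the proof of Theorem~\ref{Existence} to conclude $a_j\geqslant 2c_0$. So if you do rebuild the proof, use the paper's internal exponent rather than the one in \eqref{eq:farawayzero}.
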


\begin{proof}
Define $F: \Rset\to \Rset$ by setting
\[
F(t) = e^{(n-1)t/p} f(e^t).
\]
Then we can easily see that 
\begin{equation}\label{eq:relationfF}
\int_{\partial\Rset^n_+} |f|^p dx = (n-1)\omega_{n-1} \int_{-\infty} ^{+\infty} |F|^p dt
\end{equation}
and that
\begin{equation}\label{eq:relationfF2}
\|F\|_{L^\infty(\Rset)} \leqslant \epsilon,
\end{equation}
where, as before, $\omega_k = 2\pi^{k/2} / k \Gamma(k/2)$ is the volume of the $k$-dimensional unit ball $\mathbb B^k$. Writing $y =(y',y_n) \in \Rset^n_+$, it is easy to see that $E_\lambda f$ is radially symmetric in $y'$. Now we define $H: \Rset\times\Rset_+\to \Rset$ by letting
\[
H(t,y_n) = e^{nt/q} (E_\lambda f)(e^t, e^t y_n).
\]
By a simple change of variables, we then obtain
\begin{equation}\label{eq:relationEf}
(n-1)\omega_{n-1} \int_{\Rset_+^2} |H(t,y_n)|^q dtdy_n = \int_{\Rset_+^n} |E_\lambda f|^q dy .
\end{equation}
Thanks to \eqref{eq:condHLS} and recall $1/q = 1-1/r$ from the beginning of this section, we know that $n/q + \lambda /2 = -(n-1) (1-1/ p) - \lambda /2$. If for each real number $s \geqslant 0$ we use $\overrightarrow s$ to denote some vector sitting in $\partial \Rset_+^n$ with length $s$, then we clearly have
\[\begin{split}
H(t,y_n) = & e^{nt/q} \int_{\partial \Rset_+^n} \Big| | \overrightarrow{e^t}-x|^2+e^{2t}y_n^2 \Big|^{\lambda/2} f(x) dx \\
= & e^{nt/q + t\lambda/2} \int_{\partial \Rset_+^n} \big|  e^t (1+y_n^2) + e^{-t} |x|^2 -2 \overrightarrow{1} \cdot x  \big|^{\lambda/2} f(x) dx \\
= & e^{(n/q+ \lambda/2)t}  \int_{-\infty} ^{+\infty}  \int_{\partial B^{n-2}(0, e^s)} \big| e^{t-s} (1+y_n^2) + e^{-(t-s)}-2 \overrightarrow{e^{-s}} \cdot x \big|^{\lambda/2} \times \\
&\qquad \qquad \qquad \qquad  \qquad  \qquad \times e^{s(1+\lambda /2)} f(e^s) d\sigma ds \\
= & e^{(n/q+ \lambda/2)t}  \int_{-\infty} ^{+\infty}  \int_{\mathbb S^{n-2}} \big| e^{t-s} (1+y_n^2) + e^{-(t-s)}-2 \overrightarrow{1} \cdot \xi \big|^{\lambda/2} \times \\
&\qquad \qquad \qquad \qquad  \qquad  \qquad \times e^{s(n-1+\lambda /2)} f(e^s) d\xi ds  .
\end{split}\]
Hence, thanks to $n/q + \lambda  + n - 1 = (n - 1)/p$, we can readily obtain an explicit form of $H$ as follows
\[
H(t,y_n) = \int_{-\infty} ^{+\infty} L(t-s,y_n) F(s) ds,
\]
where $L(s,y_n) = e^{(n/q + \lambda/2)s} Z(s,y_n)$ with
\begin{equation*}
Z(s,y_n) = 
\begin{cases}
 \displaystyle \int_{\mathbb S^{n-2}} \big( e^s(1+y_n^2) + e^{-s} - 2 \overrightarrow{1} \cdot \xi \big)^{\lambda/2} d\xi & \text{ if } n\geqslant 3,\\
 \big( \big( e^s(1+y_n^2) + e^{-s} -2\big)^{\lambda/2} + \big( e^s(1+y_n^2) + e^{-s} +2\big)^{\lambda/2} \big) & \text{ if } n=2.
\end{cases}
\end{equation*}
Clearly, there exists a constant $c > 0$ such that $L(t,y_n) \geqslant c$ for all $(t,y_n) \in \Rset_+^2$ and 
\[
L(t,y_n) \sim (e^t(1+y_n^2) +e^{-t})^{\lambda/2} 
\]
as $t^2 +y_n^2 \to +\infty$. Since $n/q + \lambda = (n-1) (1/p - 1) > 0$, we know that $\lambda q + 1 <0$. From this we conclude, for any $s < 0$, that
\begin{equation}\label{eq:Step1-BoundForAllNegativeS}
\int_{\Rset} \left(\int_0^\infty L^q(t,y_n) dy_n\right)^{s/q} dt < + \infty.
\end{equation}
For any $p_1 \in (0, p)$, we choose $s_1$ such that $1/p_1 + 1/s_1 = 1 + 1/q$. Since $p_1 \in (0,1)$ and $q<0$, we clearly have $s_1 < 0$. In addition, it follows from $s_1/p_1 < s_1$ that $q/s_1 > 1$. From these facts and by the reversed Young inequality \cite[Lemma 2.2]{dz2014}, for any $y_n > 0$ we have 
\[
\int_{\Rset } |H(t, y_n)|^q dt \leqslant \Big( \int_{\Rset } |L(t, y_n)|^{s_1} dt \Big)^{q/s_1} \Big(\int_{\Rset } |F|^{p_1} dt\Big)^{q/p_1}.
\]
which, by integrating both sides with respect to $y_n$ over $[0, +\infty)$, implies
\begin{equation}\label{eq:rYoungineq}
 \int_{\Rset^2_+} |H(t, y_n)|^q dt dy_n \leqslant \Big(\int_{\Rset } |F|^{p_1} dt\Big)^{q/p_1} \int_0^\infty\Big(\int_{\Rset} L(t,y_n)^{s_1} dt\Big)^{q/s_1} dy_n.
\end{equation}
To estimate the right hand side of \eqref{eq:rYoungineq}, on one hand, we observe by the Minkowski inequality that
\begin{equation}\label{eq:Minkowskiineq}
\Big(\int_0^\infty\Big(\int_{\Rset} L^{s_1}(t,y_n) dt\Big)^{q/s_1} dy_n\Big)^{s_1/q} \leqslant \int_{\Rset} \left(\int_0^\infty L^q(t,y_n) dy_n\right)^{s_1/q} dt.
\end{equation}
Thanks to $q/s_1 > 0$, we conclude from \eqref{eq:Step1-BoundForAllNegativeS} and \eqref{eq:Minkowskiineq} that 
\[
\int_0^\infty\Big(\int_{\Rset} L^{s_1}(t,y_n) dt\Big)^{q/s_1} dy_n < +\infty,
\]
which then helps us to conclude from \eqref{eq:rYoungineq} that
\begin{equation}\label{eq:Step1-OneHand}
\int_{\Rset^2_+} |H(t, y_n)|^q dt dy_n \leqslant C \Big(\int_{\Rset } |F|^{p_1} dt\Big)^{q/p_1} .
\end{equation}
On the other hand, if we write $F ^{p_1} = F ^p F ^{p_1-p} \geqslant F ^p \|F\|_{L^\infty(\Rset)} ^{p_1-p}$, then thanks to \eqref{eq:relationfF2} and \eqref{eq:relationfF} we get
\begin{equation}\label{eq:Step1-OtherHand}
\int_{\Rset} |F|^{p_1} dt \geqslant C \epsilon^{p_1-p} \int_{\partial\Rset^n_+} |f|^p dx .
\end{equation}
Simply plugging \eqref{eq:Step1-OtherHand} into \eqref{eq:Step1-OneHand} gives
\begin{equation}\label{eq:Step1-BothHands}
\int_{\Rset^2_+} |H(t, y_n)|^q dt dy_n \leqslant C\epsilon^{q(1-p)/p_1} \Big( \int_{\partial\Rset^n_+} |f|^p dx \Big)^{q/p_1} .
\end{equation}
Thus, combining \eqref{eq:Step1-BothHands} and \eqref{eq:relationEf} gives \eqref{eq:farawayzero} as claimed.
\end{proof} 

\noindent\textbf{Step 2}. \textit{Existence of a potential minimizer $f_0$ for \eqref{eq:variationalprob}.} 

For each $j$ we set
\[
a_j = \sup_{r > 0} r^{(n-1)/p} f_j(r) 
\]
which obviously belongs to $[0,C]$. Thanks to the normalization $\|f_j\|_{L^p(\partial\Rset^n_+)} = 1$ and the fact $\|E_\lambda f_j\|_{L^q(\Rset_+^n)} \to \Cscr_{n,p,\lambda}^+ < +\infty$, we obtain from Lemma \ref{farawayzero} the following estimate $a_j \geqslant 2c_0$ for some $c_0 >0$. For each $j$, we choose $\lambda_j > 0$ in such a way that $\lambda_j^{(n-1)/p} f_j(\lambda_j) > c_0$. Then we set 
\[
g_j(x) = \lambda_j^{n/p} f_j(\lambda_j x).
\] 
From this, it is routine to check that $\{g_j\}_j$ is also a minimizing sequence for problem \eqref{eq:variationalprob}, and $g_j(1) > c_0$ for any $j$ by our choice for $\lambda_j$. Consequently, by replacing the sequence $\{f_j\}_j$ by the new sequence $\{g_j\}$, if necessary, we can further assume that our sequence $\{f_j\}_j$ obeys $f_j(1) > c_0$ for any $j$. 

Similar to Lieb's argument in \cite{l1983}, which is based on the Helly theorem, by passing to a subsequence, we have $f_j\to f_0$ a.e. in $\partial\Rset_+^n$. It is now evident that $f_0$ is non-negative radially symmetric, non-increasing and is in $L^p(\partial\Rset_+^n)$. The rest of our arguments is to show that $f_0$ is indeed the desired minimizer for \eqref{eq:variationalprob}.

By Lemma \ref{lemmaexistence}, we know that $(E_\lambda f_j)(y',y_n)$ is radially symmetric and strictly decreasing in $y'$ and strictly increasing in $y_n$ for any $j$. Moreover, for all $y \in \Rset_+^n$, there holds
\begin{equation}\label{eq:lowerbound}
(E_\lambda f_j) (y) \geqslant c_0\int_{|x| \leqslant 1} |x-y|^\lambda dx \geqslant C_2 (1 + |y|^\lambda)=: g(y)
\end{equation}
for some new constant $C_2$ independent of $j$. 

\noindent\textbf{Step 3}. \textit{The function $f_0$ is indeed a minimizer for \eqref{eq:variationalprob}} 

For each $y\in \Rset_+^n$, set
\[
k(y) = \liminf_{j\to\infty} (E_\lambda f_j) (y) .
\]
By \eqref{eq:lowerbound}, we have $k(y) \geqslant g(y)$ for any $y\in \Rset^n_+$. It is easy to see that
\[
g(y)^q - k(y)^q = \liminf_{j\to\infty}(g(y)^q - (E_\lambda f_j) (y)^q).
\]
Again by \eqref{eq:lowerbound} and the Fatou lemma, we have
\[\begin{split}
\int_{\Rset_+^n} (g(y)^q -k(y)^q) dy \leqslant & \liminf_{j\to\infty} \int_{\Rset_+^n} (g(y)^q - (E_\lambda f_j) (y)^q) dy \\
= &\int_{\Rset_+^n} g(y)^q dy - \big( \Cscr_{n,p,\lambda}^+ \big) ^q.
\end{split}\]
Therefore
\begin{equation}\label{eqIntegralK=C}
\int_{\Rset_+^n} g(y)^q dy \geqslant \int_{\Rset_+^n} k(y)^q dy \geqslant \big( \Cscr_{n,p,\lambda}^+ \big) ^q.
\end{equation}
These inequalities imply that the set $\{y\in \Rset_+^n\, :\, 0 < k(y) < +\infty\}$ has positive measure. Hence we can take a point $y_1\in \Rset_+^n$ and extract a subsequence of $E_\lambda f_j$, still denoted by $E_\lambda f_j$, such that 
\[
\lim_{j\to +\infty} (E_\lambda f_j) (y_1) = a_1 \in (0, +\infty).
\]
Repeating the above arguments and extracting a subsequence of $E_\lambda f_j$ if necessary, we can choose a point $y_2 \in \Rset_+^n$ such that $y_2\not = y_1$ and that
\[
\lim_{j\to +\infty} (E_\lambda f_j) (y_2) = a_2 \in (0, +\infty).
\]
Then there exists some constant $C_5 > 0$ such that $(E_\lambda f_j) (y_i) \leqslant C_5$ for $i=1,2$ and for all $j \geqslant 1$. Using the simple inequality $|a+b|^\lambda \leqslant \max\{1,2^{\lambda-1}\} (|a|^\lambda + |b|^\lambda)$ for any $a,b\in\Rset^n$, we have
\begin{equation*}
\begin{split}
|y_1-y_2|^\lambda \int_{\partial\Rset_+^n}f_j(x)dx \leqslant &\max\{1,2^{\lambda-1}\} \int_{\partial\Rset_+^n} |y_1-x|^\lambda f_j(x) dx \\
&+ \max\{1,2^{\lambda-1}\} \int_{\partial\Rset_+^n} |y_2-x|^\lambda f_j(x) dx \\
=&\max\{1,2^{\lambda-1}\} \big( (E_\lambda f_j) (y_1) + (E_\lambda f_j) (y_2) \big)\\
\leqslant &2\max\{1,2^{\lambda-1}\} C_5.
\end{split}
\end{equation*}
Thus, there exists another constant $C_6> 0$ such that $\int_{\partial\Rset_+^n}f_j(x)dx \leqslant C_6$ for all $j\geqslant 1$. On one hand, for any $R > 2|y_1|$, there holds $|y_1-x| \geqslant |x|/3$ for any $x$ in the region $\{3R/4 \leqslant |x|\leqslant R\}$. Therefore, by a simple variable change, we can estimate
\[
\begin{split}
C_5 \geqslant &\int_{\{3R/4\leqslant |y|\leqslant R\}} |y_1-x|^\lambda f_j(x) dx \\
\geqslant & 3^{-\lambda} f_j(R) R^{n-1+\lambda} \int_{\{3/4\leqslant |x|\leqslant 1\}} |x|^\lambda dx.
\end{split}
\]
Note that in the preceding estimate, we have used the fact that $f_j$ is radial symmetric and non-increasing. Hence, there exists some $C_7 > 0$ such that $f_j(r) \leqslant C_7 r^{-n-\lambda+1}$ for any $r > 2 |y_1|$ and for all $j\geqslant 1$.

Making use of the above estimate $f_j(r) \leqslant C_7 r^{-n-\lambda+1}$, for any $r > 2|y_1|$, we further have
\begin{equation}\label{eq:outsideball}
\begin{split}
\int_{\{|x| > R\}} f_j(x)^p dx \leqslant & C_7^p \int_{\{|x| > R\}}|x|^{-p(n+\lambda-1)}dx \\
=& -\frac{(n-1)\omega_{n-1} q}{np} C_7^pR^{np/q}.
\end{split}
\end{equation}
Thanks to $\int_{\Rset^n}f_j(x)dx \leqslant C_6$, we also have
\begin{equation}\label{eq:fjgeqR}
\int_{\{f_j > R\}} f_j(x)^p dx \leqslant R^{p-1} \int_{\partial\Rset_+^n} f_j(x) dx \leqslant C_6 R^{p-1}.
\end{equation}
In view of \eqref{eq:outsideball} and \eqref{eq:fjgeqR}, for arbitrary $\epsilon >0$, we can select $R > 2|y_1|$ sufficiently large in such a way that 
\[
\int_{\{|x| > R\}} f_j(x)^p dx < \frac\epsilon 2
\]
and that
\[
\int_{\{f_j > R\}} f_j(x)^p dx < \frac\epsilon 2.
\]
We now set $g_j(x) = \min\{f_j(x),R\}$ for each $j\geqslant 1$. Since $\int_{\partial\Rset_+^n} f_j(x)^p dx =1$, we have
\begin{equation*}
\begin{split}
\int_{\{|x|\leqslant R\}}g_j(x)^p dx& \geqslant \int_{\{|x|\leqslant R\}\cap \{f_j \leqslant R\}} f_j(x)^p dx\\
&=1 - \int_{\{|x|\leqslant R\}\cap \{f_j > R\}} f_j(x)^p dx -\int_{\{|x| > R\}} f_j(x)^p dx \geqslant 1-\epsilon.
\end{split}
\end{equation*}
For each $R$ fixed, the dominated convergence theorem guarantees that
\begin{equation*}
\lim_{j\to\infty} \int_{\{|x|\leqslant R\}}g_j(x)^p dx = \int_{\{|x|\leqslant R\}}\left(\min\{f_0(x),R\}\right)^p dx.
\end{equation*}
Therefore, by sending $R \to +\infty$, we arrive at
\[
\int_{\partial\Rset_+^n}f_0(x)^p dx \geqslant 1-\epsilon,
\]
for any $\epsilon >0$. From this we can conclude $\int_{\partial\Rset_+^n}f_0(x)^p dx \geqslant 1$. On the other hand, by the Fatou lemma, we have $\int_{\partial\Rset_+^n}f_0(x)^p dx \leqslant 1$. Therefore, we have just proved $\|f_0\|_{L^p(\partial\Rset_+^n)} =1$.

In the last part of the step, to realize that $f_0$ is indeed a minimizer for \eqref{eq:variationalprob}, we apply the Fatou lemma once again to get
\[
k(x) = \liminf_{j\to\infty} E_\lambda f_j(x) \geqslant E_\lambda f_0(x),
\]
for a.e. $x$ in $\Rset_+^n$. Hence, combining the preceding estimate and \eqref{eqIntegralK=C} gives
\[
\Cscr_{n,p,\lambda}^+ = \Cscr_{n,p,\lambda}^+ \|f_0\|_{L^p(\partial\Rset_+^n)} \leqslant \|E_\lambda f_0\|_{L^q(\Rset_+^n)} \leqslant \Big(\int_{\Rset_+^n} k(y)^{q} dy \Big)^{1/q} \leqslant \Cscr_{n,p,\lambda}^+.
\] 
This shows that $f_0$ is a minimizer for \eqref{eq:variationalprob}; hence finishing the proof.


\section{Basic properties of (\ref{eqIntegralSystem}) and Proof of Lemma \ref{lemNECESSARY}}

In this section, we first establish some basic properties of the system \eqref{eqIntegralSystem} which shall be needed to prove Lemma \ref{lemNECESSARY} and Proposition \ref{thmCLASSIFICATION}.

\subsection{Preliminaries}

In this subsection, we setup some preliminaries necessarily for our analysis. Here and in what follows, by $\lesssim$ and $\gtrsim$ we mean inequalities up to $p$, $q$, and dimensional constants. For the sake of simplicity, we denote $B_{\partial \Rset_+^n} (x,R) = \{ \xi \in \partial \Rset_+^n : |\xi - x| \leqslant R\}$ and $B_{ \Rset_+^n} (x,R) = \{ \eta \in \Rset_+^n : |\eta - x| \leqslant R\}$. We also denote 
\[
\Sigma_{x,R}^{n-1} = \Rset_+^n \backslash \overline{B_{\Rset_+^n} (x, R)} , \quad \Sigma_{x,R}^n= \partial \Rset_+^n \backslash \overline{B_{\partial \Rset_+^n} (x,R)} .
\]
We now establish the most important part of this section known as a prior estimates for solutions of \eqref{eqIntegralSystem} as stated in Lemma \ref{lem-Growth} below.

\begin{lemma}\label{lem-Growth}
For $n \geqslant 1$ and $\lambda,\kappa,\theta>0$, let $(u,v)$ be a pair of non-negative Lebesgue measurable functions in $\partial \Rset_+^n \times \Rset_+^n$ satisfying \eqref{eqIntegralSystem}. Then there hold
\begin{equation}\label{eqGrowth1}
\int_{\partial \Rset_+^n} {(1 + |x|^\lambda )u(x)^{ - \theta}dx} < +\infty , \quad \int_{\Rset_+^n} {(1 + |y|^\lambda )v(y)^{ - \kappa}dy} < +\infty ,
\end{equation}
and
\begin{equation}\label{eqGrowth2}
\begin{split}
\mathop {\lim }\limits_{|x| \to +\infty } \frac{u(x)}{|x|^\lambda} =& {\int_{\Rset_+^n} {v(y)^{ - \kappa}dy} } , \quad \mathop {\lim }\limits_{|y| \to +\infty } \frac{{v(y)}}{{|y|^\lambda }} = {\int_{\partial\Rset_+^n} {u(x)^{ - \theta }dx} } ,
\end{split}
\end{equation}
and $u$ and $v$ are bounded from below in the following sense 
\begin{equation}\label{eqGrowth3}
 \frac{1 + |x|^\lambda }{C} \leqslant u(x) \leqslant C(1 + |x|^\lambda )
\end{equation}
for all $x \in \partial\Rset_+^n$ and
\begin{equation}\label{eqGrowth4}
 \frac{1 + |y|^\lambda }{C} \leqslant v(y) \leqslant C(1 + |y|^\lambda )
\end{equation}
for all $y \in \Rset_+^n$ for some constant $C \geqslant 1$.
\end{lemma}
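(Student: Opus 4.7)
The plan is to proceed in three steps: first establish the integrability \eqref{eqGrowth1}, then deduce the asymptotics \eqref{eqGrowth2} by dominated convergence, and finally combine these with a compactness-type argument to obtain the two-sided bounds \eqref{eqGrowth3}--\eqref{eqGrowth4}. Throughout, I would freely use the elementary inequality
\[
|x-y|^\lambda \leqslant C_\lambda (1+|x|^\lambda)(1+|y|^\lambda),
\]
which follows from $|x-y|\leqslant |x|+|y|$ together with $(1+a)(1+b)\geqslant a+b$ for $a,b\geqslant 0$. Implicitly I assume, as is standard in this setting, that $u$ and $v$ are a.e. finite and strictly positive, so that $u^{-\theta}$ and $v^{-\kappa}$ are well defined and positive a.e.

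For Step 1, I would pick two points $y_0, y_1 \in \Rset_+^n$ with $v(y_0), v(y_1) < \infty$ and $|y_1|$ much larger than $|y_0|$. The identity $v(y_0) = \int_{\partial \Rset_+^n} |x-y_0|^\lambda u(x)^{-\theta}\, dx < \infty$ combined with $|x - y_0| \geqslant |x|/2$ for $|x| \geqslant 2|y_0|$ yields $\int_{\{|x|\geqslant 2|y_0|\}} |x|^\lambda u^{-\theta}\, dx \lesssim v(y_0) < \infty$. To handle the complementary region $\{|x| \leqslant 2|y_0|\}$, I would use that $|x - y_1| \geqslant |y_1| - 2|y_0| \gtrsim |y_1|$ is bounded below there, giving $\int_{\{|x| \leqslant 2|y_0|\}} u^{-\theta}\,dx \lesssim v(y_1) < \infty$. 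Combining produces the first half of \eqref{eqGrowth1}, and a symmetric argument with two boundary points $x_0, x_1 \in \partial \Rset_+^n$ (at which $u$ is finite) yields the second half.

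For Step 2, writing
\[
\frac{u(x)}{|x|^\lambda} = \int_{\Rset_+^n} \left(\frac{|x-y|}{|x|}\right)^\lambda v(y)^{-\kappa}\, dy,
\]
the integrand tends pointwise to $v(y)^{-\kappa}$ as $|x|\to\infty$, and for $|x|\geqslant 1$ the elementary inequality above dominates it by $2C_\lambda (1+|y|^\lambda) v(y)^{-\kappa}$, which is integrable by Step 1. Dominated convergence delivers the first half of \eqref{eqGrowth2}, and the second is analogous. For Step 3, the upper bound is immediate: $u(x) \leqslant C_\lambda(1+|x|^\lambda) \int (1+|y|^\lambda) v^{-\kappa}\,dy \lesssim 1+|x|^\lambda$. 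For the lower bound, \eqref{eqGrowth2} together with $\int v^{-\kappa} > 0$ (which must hold, since otherwise $v \equiv +\infty$ a.e., contradicting finiteness) gives $u(x) \gtrsim |x|^\lambda$ for $|x|\geqslant R_0$; on the remaining region $\{|x|\leqslant R_0\}$ one picks $R_1 \geqslant 2R_0$ so that $\int_{\{|y|\geqslant R_1\}} v^{-\kappa}\,dy > 0$, uses $|x-y|\geqslant |y|/2$ there, and obtains $u(x) \geqslant 2^{-\lambda}\int_{\{|y|\geqslant R_1\}} |y|^\lambda v^{-\kappa}\,dy \geqslant c > 0$. Patching the two regimes yields $u(x) \gtrsim 1+|x|^\lambda$, and analogously for $v$.

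The main obstacle I anticipate is purely organizational: because of the asymmetry between boundary integration on $\partial\Rset_+^n$ and bulk integration on $\Rset_+^n$, each of the four assertions must be argued twice (once for $u$, once for $v$), and one must take care that the auxiliary points chosen in each argument lie in the correct space. Beyond this bookkeeping, no deep new ingredient appears to be needed.
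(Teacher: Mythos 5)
Your proposal is correct, and it reaches the same conclusions by a slightly different but equally valid route. The paper first manufactures a set $E \subset \{u < R, v < R\} \cap B(0,R)$ of positive measure and uses it twice: first to deduce the pointwise lower bound $u, v \geqslant C_0 > 0$, then again to show $u(x), v(y) \gtrsim |x|^\lambda$ for $|x| \geqslant 2R$. Only afterward does the paper prove \eqref{eqGrowth1} (using the already-established lower bound to control the integral of $u^{-\theta}$ over a small ball) and \eqref{eqGrowth2} by dominated convergence, and it obtains the upper bounds from \eqref{eqGrowth2} last. You instead prove \eqref{eqGrowth1} directly from two finite evaluation points $y_0$ (near) and $y_1$ (far), with $y_1$ supplying the near-field control: since $|x - y_1| \gtrsim |y_1|$ on $\{|x| \leqslant 2|y_0|\}$, finiteness of $v(y_1)$ alone forces $u^{-\theta} \in L^1$ there, bypassing the set-$E$ construction and the preliminary positive lower bound on $u$. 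Your upper bound in \eqref{eqGrowth3}--\eqref{eqGrowth4} is also more direct (just plug the elementary inequality $|x-y|^\lambda \lesssim (1+|x|^\lambda)(1+|y|^\lambda)$ into the equation and use \eqref{eqGrowth1}), whereas the paper routes the far field through \eqref{eqGrowth2}. What the paper's route buys is that the $E$-argument actually \emph{proves} the a.e. finiteness and positivity of $u$ and $v$ that you take as an implicit standing assumption; what your route buys is a shorter, more linear proof once that assumption is granted. Since the paper itself asserts those structural facts with only a brief comment, the assumption you make is not a genuine gap, but if you wanted your proof to be fully self-contained you would need to rule out the degenerate solution $(u,v)=(0,+\infty)$ and justify that $v$ is finite at arbitrarily large $|y_1|$ before invoking the two-point argument.
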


\begin{proof}
To prove our lemma, we first observe from \eqref{eqIntegralSystem} that both $u$ and $v$ are strictly positive everywhere in their domains and are finite within a set of positive measure. Hence there exist some $R>1$ sufficiently large and some Lebesgue measurable set $E \subset \overline{\Rset_+^n}$ such that
\begin{equation}\label{eqSetE}
E \subset \{ z : u(z) < R, v(z) < R\} \cap B_{\overline{\Rset_+^n}} (0,R)
\end{equation}
with $\mathscr L^n (E) \geqslant 1/R$. Using this, we can easily bound $v$ from below as follows
\[
\begin{split}
v(y) \geqslant \int_E {|x - y|^\lambda u(x)^{ - q}dx} \geqslant & \frac{1}{R^q}\int_E {|x - y|^\lambda dx} =\frac{1}{R^q}\int_{E+y} {|x|^\lambda dx} 
\end{split}
\]
for any $y \in \Rset_+^n$. Now we choose $\varepsilon > 0$ small enough and then fix it in such a way that $\vol (B_{\Rset_+^n} (0,\varepsilon)) < \mathscr L^n (E) /2$. Then we can estimate
\[
\begin{split}
\int_{E+y} {|x|^\lambda dx} & \geqslant \int_{E+y \backslash B_{\Rset_+^n} (0,\varepsilon)} {|x|^\lambda dx} \\
& \geqslant \varepsilon^\lambda \int_{E+y \backslash B_{\Rset_+^n} (0,\varepsilon)} dy \\
& = \varepsilon^\lambda \big( \mathscr L^n (E+y) -\vol (B_{\Rset_+^n} (0,\varepsilon)) \big).
\end{split}
\]
From this, it is clear that $v$ is bounded from below by some positive constant. The same reason applied to $u$ shows that there exists some constant $C_0>0$ such that 
\begin{equation}\label{eqUVBoundedFromBelow}
u(x), v(y) > C_0 
\end{equation}
for all $x \in \partial\Rset^n$ and $y \in \Rset^n$.

\noindent\textbf{Proof of \eqref{eqGrowth3}}. To prove this, we first consider $|x| \geqslant 2R$ where $R$ is defined through \eqref{eqSetE}. Note that for every $y \in E \subset B_{\overline{\Rset_+^n}}(0,R)$, there holds $|x - y| \geqslant |x| - |y| \geqslant |x|/2$, thanks to $|x| \geqslant 2R$. Using this we can estimate
\[
v(y) \geqslant \frac{1}{R^q}\int_E {|x - y|^\lambda dx} \geqslant \frac{\text{vol}(E)}{(2R)^\lambda} |y|^\lambda
\]
for any $|y| \geqslant 2R$. A similar argument also shows $u(x) \geqslant \text{vol}(E) (2R)^{-\lambda}|x|^\lambda $ in the region $\{x : |x| \geqslant 2R\}$. Hence, it is easy to select a large constant $C>1$ in such a way that \eqref{eqGrowth3} holds in the region $\{ |x| \geqslant 2R\}$. Thanks to \eqref{eqUVBoundedFromBelow}, we can further decrease $C$, if necessary, to obtain the estimate \eqref{eqGrowth3} in the ball $\{x \in \overline{\Rset_+^n}: |x| \leqslant 2R\}$; hence the proof of \eqref{eqGrowth3} follows.

\noindent\textbf{Proof of \eqref{eqGrowth1}}. We only need to estimate $v$ since $u$ can be estimated similarly. To this purpose, we first show that $u^{-q} \in L^1(\partial \Rset_+^n)$. Clearly for some $\overline x$ satisfying $1 \leqslant |\overline x| \leqslant 2$, there holds
\[
\int_{\partial \Rset_+^n} {|\overline x - x|^\lambda u{(x)^{ - \theta}}dx} = v(\overline x ) \in (0, + \infty ).
\]
Observer that for any $x \in \partial \Rset_+^n \backslash B_{\partial \Rset_+^n} (0,4)$, there holds $|\overline x - x | \geqslant |x| -|\overline x | > 1$; hence
\[
\int_{\partial \Rset_+^n \backslash B_{\partial \Rset_+^n} (0,4) } u(x)^{ - \theta}dx < \int_{\Rset^n} {|\overline x - x|^\lambda u{(x)^{ - \theta }}dx} < +\infty.
\]
In the small ball $B_{\partial \Rset_+^n} (0,4)$, thanks to \eqref{eqGrowth3}, it is obvious to verify that
\[
\int_{B_{\partial \Rset_+^n} (0,4) } u(x)^{ - \theta}dx \lesssim \int_{B_{\partial \Rset_+^n} (0,4)} (1+|x|^\lambda))^{-\theta} dx < +\infty.
\]
Thus, we have just shown that $u^{-\theta} \in L^1(\partial \Rset_+^n)$. In view of \eqref{eqGrowth1}, it suffices to prove that
\begin{equation}\label{eqGrowth1-suffice}
\int_{\partial \Rset_+^n} |x|^\lambda u(x)^{ - \theta}dx < +\infty.
\end{equation}
To see this, we again observe that $|x| \leqslant 2|\overline x - x| $ for all $x \in \partial \Rset_+^n \backslash B_{\partial \Rset_+^n} (0,4) $. Therefore,
\[
\int_{\partial \Rset_+^n \backslash B_{\partial \Rset_+^n}(0,4) } |x|^\lambda u(x)^{ - q}dx \lesssim \int_{\partial \Rset_+^n \backslash B_{\partial \Rset_+^n} (0,4)} {|\overline x - x|^\lambda u{(x)^{ -\theta}}dx} < +\infty.
\]
In the small ball $B_{\partial \Rset_+^n} (0,4)$, it is obvious to see that
\[
\int_{ B_{\partial \Rset_+^n} (0,4) } |x|^\lambda u(x)^{ - \theta }dx \lesssim \int_{B_{\partial \Rset_+^n} (0,4)}u(x)^{-\theta} dx < +\infty,
\]
thanks to $u^{- \theta} \in L^1(\partial \Rset_+^n)$. From this, \eqref{eqGrowth1-suffice} follows, so does \eqref{eqGrowth1}. 

\noindent\textbf{Proof of \eqref{eqGrowth2}}. We only consider the limit $|y|^{-\lambda} v(y)$ as $|y| \to +\infty$ since the limit $|x|^{-\lambda} u(x)$ can be proved similarly. Indeed, using \eqref{eqIntegralSystem}, we first obtain
\begin{equation}\label{eqProofGrowth2-1}
\begin{split}
\mathop {\lim }\limits_{|y| \to +\infty } \frac{ v(y) }{|y|^\lambda } =& \mathop {\lim }\limits_{|y| \to +\infty } {\int_{\partial \Rset_+^n} {\frac{{|x - y|^\lambda }}{{|y|^\lambda}} u{(x)^{ - \theta}}dx} }.
\end{split}
\end{equation}
Observe that as $|y| \to +\infty$, $( |x - y|/|y| )^\lambda u(x)^{ - \theta} \to u(x)^{ - \theta}$ almost everywhere $y$ in $\Rset_+^n$. Hence we can apply the Lebesgue dominated convergence theorem to pass \eqref{eqProofGrowth2-1} to the limit to conclude \eqref{eqGrowth1} provided we can show that $|x - y|^\lambda |x|^{ - \theta}u(x)^{ - q}$ is bounded by some integrable function. To this end, we observe that $|x-y|^\lambda \lesssim |x|^\lambda +|y|^\lambda $; hence, if $|x| > 1$ then
\[
{\left( {\frac{|x - y|}{|y|}} \right)^\lambda}u(x)^{ - \theta } \lesssim (1+|x|^\lambda)u(x)^{ - \theta}.
\]
Our proof now follows by observing $ (1 + |x|^\lambda )u(x)^{ -\theta} \in L^1(\partial \Rset_+^n)$ by \eqref{eqGrowth1}.

\noindent\textbf{Proof of \eqref{eqGrowth4}}. To see this, we first observe from \eqref{eqGrowth2} that there exists some large number $k>1/R$ such that 
\[
\frac{u(x)}{|x|^\lambda} < 1 + \int_{\Rset_+^n} v(y)^{-\kappa} dy
\]
in $\partial \Rset_+^n \backslash B_{\partial \Rset_+^n} (0,kR)$. In the ball $B_{\partial \Rset_+^n} (0,kR)$, it is easy to estimate $|x-y|^\lambda \lesssim |x|^\lambda + |y|^\lambda$ which helps us to conclude that
\[
u(x) \lesssim (kR)^\lambda \int_{\Rset_+^n} (1+|y|^\lambda) v(y)^{-\kappa} dy
\]
in the ball $B_{\partial \Rset_+^n} (0,kR)$. From this and our estimate for $u$ outside $B_{\partial \Rset_+^n}(0,kR)$, we obtain the desired estimate in $\partial \Rset_+^n$. Our estimate for $v$ follows similarly; hence we obtain \eqref{eqGrowth4} as claimed.
\end{proof}

Inspired by \eqref{eqGrowth1}, we prove the following simple observation.

\begin{lemma}\label{lem-IntegralU=IntegralV}
There holds $u \in L^{1-\theta} (\partial\Rset_+^n)$ and $v \in L^{1-\kappa} (\Rset_+^n)$. Moreover,
\[
\int_{\partial\Rset_+^n} u^{1-\theta} (x) dx = \int_{\Rset_+^n} \int_{\partial\Rset_+^n} v(y)^{-\kappa} |x-y|^{\lambda } u^{-\theta}(x) dx dy = \int_{\Rset_+^n} v^{1-\kappa} (y) dy .
\]
\end{lemma}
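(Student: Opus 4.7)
The plan is to multiply the first equation in \eqref{eqIntegralSystem} by $u(x)^{-\theta}$, integrate over $\partial\Rset_+^n$, and then swap the order of integration using Tonelli's theorem (the integrand being nonnegative). After the swap, the inner integral over $\partial\Rset_+^n$ reproduces $v(y)$ by the second equation of \eqref{eqIntegralSystem}, yielding the chain of identities as stated. The only real content, therefore, is to verify that the resulting single integrals are finite, so that Tonelli gives an honest equality of finite numbers rather than a tautology $+\infty = +\infty$.

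For the finiteness of $\int_{\partial\Rset_+^n} u^{1-\theta}(x)\, dx$, I would write, using positivity of $u$,
\[
u(x)^{1-\theta} = u(x)\cdot u(x)^{-\theta} \leqslant C\bigl(1+|x|^\lambda\bigr) u(x)^{-\theta}
\]
thanks to the upper bound in \eqref{eqGrowth3}. The right-hand side is integrable over $\partial\Rset_+^n$ by the first half of \eqref{eqGrowth1}. This gives $u \in L^{1-\theta}(\partial\Rset_+^n)$ in the sense of the paper's convention. The same argument, with \eqref{eqGrowth4} and the second half of \eqref{eqGrowth1}, gives $v \in L^{1-\kappa}(\Rset_+^n)$.

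For the identity, multiply $u(x) = \int_{\Rset_+^n} |x-y|^\lambda v(y)^{-\kappa}\,dy$ by $u(x)^{-\theta}$ to obtain
\[
u(x)^{1-\theta} = \int_{\Rset_+^n} |x-y|^\lambda v(y)^{-\kappa} u(x)^{-\theta}\,dy,
\]
then integrate over $x \in \partial\Rset_+^n$. Since the integrand is nonnegative and the outer integral equals $\int u^{1-\theta}\,dx < +\infty$ by the previous paragraph, Tonelli's theorem permits exchanging the order of integration, giving
\[
\int_{\partial\Rset_+^n} u^{1-\theta}(x)\,dx = \int_{\Rset_+^n} v(y)^{-\kappa} \left( \int_{\partial\Rset_+^n} |x-y|^\lambda u(x)^{-\theta}\,dx \right) dy.
\]
The inner parenthesis is exactly $v(y)$ by the second equation of \eqref{eqIntegralSystem}, so the right-hand side collapses to $\int_{\Rset_+^n} v^{1-\kappa}(y)\,dy$, finishing the proof.

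There is no real obstacle here: the only subtlety is ensuring that the double integral is finite before invoking Fubini, and this is handled uniformly by the a priori bounds of Lemma \ref{lem-Growth}. The argument is completely symmetric in $(u,\theta,\partial\Rset_+^n)$ and $(v,\kappa,\Rset_+^n)$, which is reflected in the symmetric form of the conclusion.
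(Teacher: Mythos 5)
Your proof is correct and follows essentially the same route as the paper: bound $u^{1-\theta}\lesssim (1+|x|^\lambda)u^{-\theta}$ via \eqref{eqGrowth3} and use \eqref{eqGrowth1} for integrability, then multiply each equation of \eqref{eqIntegralSystem} by the appropriate power, integrate, and apply Tonelli. One small remark: Tonelli already yields the equality of iterated integrals for nonnegative integrands regardless of finiteness, so the finiteness check is needed only to make the $L^{1-\theta}$ and $L^{1-\kappa}$ claims meaningful, not to license the interchange itself.
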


\begin{proof}
To see $u \in L^{1-\theta} (\partial\Rset_+^n)$, we observe from \eqref{eqGrowth1} and \eqref{eqGrowth3} that
\[
\int_{\partial\Rset_+^n} u^{1-\theta} (x) dx  \lesssim \int_{\partial \Rset_+^n} {(1 + |x|^\lambda )u(x)^{ - \theta}dx} < +\infty.
\]
A similar argument shows $v \in L^{1-\kappa} (\Rset_+^n)$ as claimed. The way to obtain the desired relation is elementary since
\[
u^{1-\theta} (x) = u^{-\theta}(x) \int_{\Rset_+^n} v(y)^{-\kappa} |x-y|^{\lambda } dy
\]
and
\[
v^{1-\kappa} (y) = v(y)^{-\kappa} \int_{\partial\Rset_+^n} |x-y|^{\lambda } u^{-\theta}(x) dx .
\]
Integrating both sides over suitable domains gives the desired result.
\end{proof}

In the following result, we prove a regularity result similar to \cite[Lemma 5.2]{l2004} obtained by Li.

\begin{lemma}\label{lem-Regularity}
For $n \geqslant 1$ and $\lambda,\kappa,\theta>0$, let $(u,v)$ be a pair of non-negative Lebesgue measurable functions in $\partial \Rset_+^n \times \Rset_+^n$ satisfying \eqref{eqIntegralSystem}. Then $u$ and $v$ are smooth.
\end{lemma}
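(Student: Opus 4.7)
My plan is to combine dominated convergence with a \emph{tangential} integration by parts, exploiting the asymmetric geometry of the problem: since $y \in \Rset_+^n$ always lies in the open interior, $v$ enjoys genuine interior regularity, while $x \in \partial\Rset_+^n$ sits on an $(n-1)$-dimensional hyperplane, so $u$ only needs tangential derivatives. The pointwise two-sided bounds \eqref{eqGrowth3}--\eqref{eqGrowth4} and the weighted integrability \eqref{eqGrowth1} from Lemma \ref{lem-Growth} supply the domination I need. Continuity of $u$ and $v$ follows immediately from the dominated convergence theorem applied to the defining integrals, via $|x-y|^\lambda \leqslant C(1+|x|^\lambda + |y|^\lambda)$.

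To show $v \in C^\infty(\Rset_+^n)$, I fix $y_0 \in \Rset_+^n$, set $\delta := (y_0)_n/2 > 0$, and work in a neighborhood $U \subset B_{\Rset_+^n}(y_0, \delta)$. For $y \in U$ and $x \in \partial\Rset_+^n$ we have $|x - y| \geqslant \delta/2$ uniformly, so the kernel $|x-y|^\lambda$ is $C^\infty$ in $y \in U$ with derivatives satisfying $|\partial_y^\alpha |x-y|^\lambda| \leqslant C_\alpha(1 + |x|^\lambda)$. Combined with $(1+|x|^\lambda) u^{-\theta} \in L^1(\partial\Rset_+^n)$ from \eqref{eqGrowth1}, iterating the Leibniz rule under the integral sign (justified at each step by dominated convergence) yields $v \in C^\infty(\Rset_+^n)$.

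For $u \in C^\infty(\partial\Rset_+^n)$, I only need tangential derivatives $\partial_{x_i}$ with $i < n$. The identity $\partial_{x_i}|x-y|^\lambda = -\partial_{y_i}|x-y|^\lambda$, followed by integration by parts in the $y_i$ variable over $\R$ for each fixed $y_n > 0$, gives
\[
\partial_{x_i} u(x) = \int_{\Rset_+^n} |x-y|^\lambda \partial_{y_i} v^{-\kappa}(y) dy.
\]
No boundary term at $y_n = 0$ arises because the differentiation is tangential to $\partial\Rset_+^n$, and the boundary terms at $y_i \to \pm \infty$ vanish because $|x-y|^\lambda v^{-\kappa}(y) \lesssim |y|^{\lambda(1-\kappa)} \to 0$, using $\lambda(\kappa - 1) > n$ from Lemma \ref{lem-Growth}. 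Iterating this manipulation one derivative at a time transfers all $x$-derivatives onto $v^{-\kappa}$, which is smooth on $\Rset_+^n$ by the previous paragraph, producing
\[
\partial^\alpha_{x} u(x) = \int_{\Rset_+^n} |x-y|^\lambda \partial^\alpha_{y} v^{-\kappa}(y) dy
\]
for every tangential multi-index $\alpha$; a final application of dominated convergence shows this is continuous in $x$, so $u \in C^\infty(\partial\Rset_+^n)$.

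The principal technical obstacle is justifying the iterated integration by parts: at each stage I must control $\partial^\alpha v^{-\kappa}$ both for $|y| \to \infty$ (to kill the tangential boundary terms) and as $y \to \partial\Rset_+^n$ (to ensure local integrability of $|x-y|^\lambda \partial^\alpha v^{-\kappa}$ near $y = x$). This is handled by a parallel bootstrap using the companion representation $v(y) = \int_{\partial\Rset_+^n} |x-y|^\lambda u^{-\theta}(x) dx$: once $u \in C^k(\partial\Rset_+^n)$ is known, the same tangential integration by parts in $x$ transfers derivatives onto $u^{-\theta}$, producing continuous tangential derivatives of $v$ of order $k$ up to $\overline{\Rset_+^n}$ with polynomial growth controlled by \eqref{eqGrowth3}. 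Alternating the two identities closes the bootstrap and gives both $u \in C^\infty(\partial\Rset_+^n)$ and $v \in C^\infty(\Rset_+^n)$, completing the proof.
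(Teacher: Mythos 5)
Your proof takes a genuinely different route from the paper's. The paper decomposes $u$ and $v$ into a far part $\int_{|y|>2R}$ (smooth by direct differentiation under the integral, using \eqref{eqGrowth1}) and a near part $\int_{|y|\leqslant 2R}$, proves the near part is H\"older continuous via the elementary estimate $\bigl||x-y|^\lambda - |z-y|^\lambda\bigr| \lesssim |x-z|^{\min\{\lambda,1\}}$ together with the local boundedness of $u^{-\theta}$ and $v^{-\kappa}$ from \eqref{eqGrowth3}--\eqref{eqGrowth4}, and then invokes a standard bootstrap (following Li's Lemma 5.2 in \cite{l2004}). You instead exploit the half-space geometry directly: for $y_0$ in the open half-space, the kernel $|x-y|^\lambda$ has no singularity because $|x-y| \geqslant (y_0)_n/2 > 0$ uniformly for $x \in \partial\Rset_+^n$, which yields $v \in C^\infty(\Rset_+^n)$ in one stroke by dominated convergence; for $u$ you integrate by parts tangentially to transfer derivatives onto $v^{-\kappa}$, exploiting the absence of boundary terms in the $y_i$ directions ($i<n$). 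This is a nice observation that the paper does not make and it shortcuts the interior regularity of $v$. Your argument is correct in spirit, but the closing bootstrap is stated too weakly: you write that $u \in C^k$ produces tangential derivatives of $v$ \emph{of order $k$} up to $\overline{\Rset_+^n}$, but if the implication were merely $u \in C^k \Rightarrow v \in C^k$ and $v \in C^k \Rightarrow u \in C^k$, the alternation would never advance. What actually closes the argument is that each passage gains regularity: from $v$ having $m$ controlled tangential derivatives up to the boundary one obtains $u \in C^{k}$ for all $k < m + n + \lambda$ (transfer $m$ derivatives, differentiate the remaining $k-m$ under the integral since $\int_{|y|<1}|y|^{\lambda-(k-m)}\,dy < \infty$), and symmetrically with gain $n-1+\lambda$ in the other direction; since $n-1+\lambda > 0$, finitely many alternations reach any order. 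This quantitative gain is the ingredient you need to spell out, although I note the paper's own "standard bootstrap argument" is no more explicit.
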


\begin{proof}
Our proof is similar to that of \cite[Lemma 5.2]{l2004}. Let $R>0$ be arbitrary, first we decompose $u$ and $v$ into the following way
\[\begin{split}
 u(x) =& \big(u_R^1 + u_R^2 \big) (x) = \Big(\int_{|y| \leqslant 2R} + \int_{|y| > 2R} \Big)|x - y|^\lambda v{(y)^{ - \kappa}}dy, \\
 v(y) =& \big(v_R^1 + v_R^2 \big) (y) = \Big(\int_{|x| \leqslant 2R} + \int_{|x| > 2R} \Big)|x - y|^\lambda u(x)^{ - \theta}dx. \\ 
\end{split} \]
Thanks to \eqref{eqGrowth1}, we immediately see that we can continuously differentiate $u_R^2$ and $v_R^2$ under the integral sign for any $x \in \partial \Rset_+^n$ satisfying $|x|<R$. Consequently, $u_R^2 \in C^\infty (B_{\partial \Rset_+^n}(0,R))$ and $v_R^2 \in C^\infty (B_{\Rset_+^n}(0,R))$. 

In view of \eqref{eqGrowth3} and \eqref{eqGrowth4}, we know that $u^{-\theta} \in L^\infty (B_{\partial \Rset_+^n} (0, 2R))$. This and the following elementary inequality $\big| |x-y|^\lambda - |z-y|^\lambda \big| \lesssim |x-z|^{\min\{\lambda, 1\}}$ for all $x, z \in B(0, R)$ and all $y \in B(0, 2R)$ conclude that $v_R^1$ is at least H\"older continuous in $B_{\Rset_+^n}(0,R)$. Similar reasons tell us that $u_R^1$ is also at least H\"older continuous in $B_{\partial \Rset_+^n}(0,R)$. Hence, we have just proved that $u$ and $v$ are at least H\"older continuous in $B(0,R)$, so are at least H\"older continuous in $\partial \Rset_+^n$ and $\Rset_+^n$ respectively since $R>0$ is arbitrary. 

Standard bootstrap argument shows $u \in C^\infty (\partial \Rset_+^n)$ and at the same time $v \in C^\infty (\Rset_+^n)$ follows the same lines.
\end{proof}

\subsection{Proof of Lemma \ref{lemNECESSARY}}

To prove this lemma, we borrow the idea in \cite{lei2015}. 
As the first step in the proof, we make use of the integrability of $u$ and $v$ in $L^{1-\theta} (\partial\Rset_+^n)$ and in $L^{1-\kappa} (\Rset_+^n)$ respectively to derive \eqref{eq-p5-eqKeyU} and \eqref{eq-p5-eqKeyV} below. For this purpose, we let $\zeta : [0,+\infty) \to [0,1]$ be a smooth cut-off function such that
\[
\zeta (t)=
\begin{cases}
0 &\text{ if } t \geqslant 2,\\
1 &\text{ if } 0 \leqslant t \leqslant1,\\
\end{cases}
\]
and $\zeta' \in [0,1]$. Then for some fixed $N>0$, by integration by parts, we obtain
\begin{equation}\label{eq-p5-eq3}
\begin{split}
\int_{B_{\partial\Rset_+^n}(0,N)} \zeta \Big( \frac {|x|}R\Big) \langle \nabla u^{1-\theta} (x), x \rangle dx = & - (n-1) \int_{B_{\partial\Rset_+^n}(0,N)} \zeta \Big( \frac {|x|}R\Big) u^{1-\theta} (x) dx \\ 
&- \int_{B_{\partial\Rset_+^n}(0,N)} \Big\langle \nabla \zeta \Big( \frac {|x|}R\Big) , x \Big\rangle u^{1-\theta} (x) dx \\
& + \int_{\partial B_{\partial\Rset_+^n} (0,N)} \zeta \Big( \frac {|x|}R\Big) u^{1-\theta} (x) \Big\langle x, \frac x{|x|}\Big\rangle d\sigma.
\end{split}
\end{equation}
For fixed $R$, by taking $N> 2R$, we deduce from the definition of $\zeta$ that the last term on the right hand side of \eqref{eq-p5-eq3} vanishes. $III =0$. Therefore, taking the limit as $N \to +\infty$ gives
\begin{equation}\label{eq-p5-eq3'}
\begin{split}
\int_{ \partial\Rset_+^n} \zeta \Big( \frac {|x|}R\Big) \langle \nabla u^{1-\theta} (x), x \rangle dx = & - (n-1) \int_{ \partial\Rset_+^n } \zeta \Big( \frac {|x|}R\Big) u^{1-\theta} (x) dx \\ 
&- \int_{ \partial\Rset_+^n } \Big\langle \nabla \zeta \Big( \frac {|x|}R\Big) , x \Big\rangle u^{1-\theta} (x) dx \\
=& I + II.
\end{split}
\end{equation}
To estimate $II$, we note by a standard computation that
\[
\Big|\Big\langle \nabla \zeta \Big( \frac {|x|}R\Big) , x \Big\rangle\Big| \leqslant \frac {2|x|}R
\]
holds. From this one can conclude that $II \to 0$ as $R \to +\infty$ since
\[\begin{split}
\int_{\partial\Rset_+^n} \Big\langle \nabla \zeta \Big( \frac {|x|}R\Big) , x \Big\rangle u^{1-\theta} (x) dx = & \int_{\partial\Rset_+^n \cap \{R \leqslant |x| \leqslant 2R\}}  \Big\langle \nabla \zeta \Big( \frac {|x|}R\Big) , x \Big\rangle u^{1-\theta} (x) dx\\
 \leqslant & 4 \int_{\partial\Rset_+^n \cap \{R \leqslant |x| \leqslant 2R\}} u^{1-\theta} dx.
\end{split}\]
Thus, by the dominated convergence theorem, we obtain
\begin{equation}\label{eq-p5-eq4}
\begin{split}
\lim_{R \to +\infty} \int_{\partial\Rset_+^n} \zeta& \Big( \frac {|x|}R\Big) \langle \nabla u^{1-\theta} (x), x \rangle dx = - (n-1) \int_{\partial\Rset_+^n} u^{1-\theta} (x) dx.
\end{split}
\end{equation}
As a consequence of \eqref{eq-p5-eq4}, we obtain
\begin{equation}\label{eq-p5-eqKeyU}
\int_{\partial\Rset_+^n} \langle \nabla u^{1-\theta} (x), x \rangle dx = - (n-1) \int_{\partial\Rset_+^n} u^{1-\theta} (x) dx.
\end{equation}
A similar argument shows
\begin{equation}\label{eq-p5-eqKeyV}
\int_{ \Rset_+^n} \langle \nabla v^{1-\kappa} (y), y \rangle dy = - n \int_{ \Rset_+^n} v^{1-\kappa} (y) dy.
\end{equation}
Notice that by the Fubini theorem, there holds
\begin{equation}\label{eq-p5-eqKeyIdentity}
\begin{split}
\frac{\kappa}{\kappa-1} \int_{\Rset_+^n} \langle \nabla v^{1-\kappa} (y), y \rangle dy =& \int_{\Rset_+^n} \langle \nabla v^{-\kappa} (y), y \rangle v(y) dy \\
=& \int_{\Rset_+^n} \langle \nabla v^{-\kappa} (y), y \rangle \int_{\partial \Rset_+^n} |x-y|^\lambda u(x)^{-\theta} dx dy\\
=& \int_{\partial \Rset_+^n} u(x)^{-\theta} \int_{\Rset_+^n} |x-y|^\lambda \langle \nabla v^{-\kappa} (y), y \rangle dy dx.
\end{split}
\end{equation}
For $\mu>0$, we set $y=\mu z$. A simple variable change tells us that
\[\begin{split}
u(\mu x) =& \int_{ \Rset_+^n} |\mu x-y|^\lambda v(y)^{-\kappa} dy = \mu^{n+\lambda} \int_{ \Rset_+^n} |x-z|^\lambda v(\mu z)^{-\kappa} dz.
\end{split}\]
Differentiating with respect to $\mu$ gives
\[\begin{split}
\langle x, (\nabla u) (\mu x) \rangle =& (n+\lambda) \mu^{n+\lambda-1} \int_{ \Rset_+^n} |x-z|^\lambda v(\mu z)^{-\kappa} dz\\
&+ \mu^{n+\lambda } \int_{ \Rset_+^n} |x-z|^\lambda \langle z, (\nabla v^{-\kappa})(\mu z) \rangle dz,
\end{split}\]
which implies, after setting $\mu =1$ and using \eqref{eqIntegralSystem}, the following
\begin{equation}\label{eq-p5-eqGradientUIdentity}
\begin{split}
\langle x, \nabla u ( x) \rangle =& (n+\lambda) u(x) + \int_{ \Rset_+^n} |x-z|^\lambda \langle z, (\nabla v^{-\kappa})( z) \rangle dz.
\end{split}
\end{equation}
Hence, by multiplying both sides of \eqref{eq-p5-eqGradientUIdentity} by $u^{-\theta}$ and making use of \eqref{eq-p5-eqKeyIdentity}, we have just shown that
\begin{equation}\label{eq-p5-eqKeyIdentity2}
\begin{split}
\int_{\partial\Rset_+^n} \langle \nabla u (x), x \rangle u^{ -\theta} (x) dx =& (n+\lambda) \int_{\partial\Rset_+^n} u^{1-\theta} (x) dx + \frac{\kappa}{\kappa-1} \int_{\Rset_+^n} \langle \nabla v^{1-\kappa} (y), y \rangle dy.
\end{split}
\end{equation}
Thanks to \eqref{eq-p5-eqKeyU} and \eqref{eq-p5-eqKeyV}, it follows from \eqref{eq-p5-eqKeyIdentity2} that
\[
- \frac{n-1}{1-\theta} \int_{\partial\Rset_+^n} u^{1-\theta} (x) dx =(n+\lambda) \int_{\partial\Rset_+^n} u^{1-\theta} (x) dx - \frac{\kappa n}{\kappa-1} \int_{\Rset_+^n} v^{1-\kappa} (y) dy.
\]
Thus, we have just proved that
\[
\Big( n+\lambda - \frac{n-1}{ 1-\theta } \Big) \int_{\partial\Rset_+^n} u^{1-\theta} (x) dx = \frac{\kappa n}{\kappa-1} \int_{\Rset_+^n} v^{1-\kappa} (y) dy.
\]
Thanks to Lemma \ref{lem-IntegralU=IntegralV}, we conclude that $(1 - 1/n)/(\theta - 1) + 1/(\kappa - 1) = \lambda /n$ as claimed.

It is worth noticing that the necessary condition \eqref{eq:NecessaryCond} is exactly the same as the condition \eqref{eq:condHLS} in the reversed HLS inequality \eqref{eq:reverseHLS} if one replaces $\kappa$ and $\theta$ by $1/(1-r)$ and $1/(1-p)$ respectively. 

\begin{lemma}\label{lem->0>0}
For $n \geqslant 1$, $\lambda>0$, $ \kappa>0$ and $\theta>0$ satisfying $\theta = \kappa - 2/\lambda$, there holds
\[
2n - \kappa \lambda + \lambda \geqslant 0, \quad 2n -2 - \theta \lambda + \lambda \geqslant 0.
\]
\end{lemma}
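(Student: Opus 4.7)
My first observation is purely algebraic: under the hypothesis $\theta = \kappa - 2/\lambda$ one has $\theta\lambda = \kappa\lambda - 2$, which gives the identity
\[
2n - 2 - \theta\lambda + \lambda = 2n - \kappa\lambda + \lambda.
\]
Hence the two proposed inequalities collapse into the single inequality $\kappa\lambda \leqslant 2n + \lambda$, and it suffices to verify this one.

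Since the hypothesis $\theta = \kappa - 2/\lambda$ alone does not bound $\kappa$ from above, the plan is to couple it with the necessary condition \eqref{eq:NecessaryCond} from Lemma \ref{lemNECESSARY}, which $(\kappa,\theta)$ must satisfy whenever the system \eqref{eqIntegralSystem} is to admit a $C^1$ solution. Writing $a = \kappa - 1$ and substituting $\theta - 1 = a - 2/\lambda$ into \eqref{eq:NecessaryCond}, then multiplying through to clear the denominators, one arrives at the quadratic equation
\[
\lambda^2 a^2 - (2n+1)\lambda a + 2n = 0.
\]
The discriminant is $(2n+1)^2\lambda^2 - 8n\lambda^2 = (2n-1)^2\lambda^2$, a perfect square, so the two roots are explicitly $\lambda a = 1$ and $\lambda a = 2n$, equivalently $\kappa = 1 + 1/\lambda$ or $\kappa = 1 + 2n/\lambda$.

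It remains to evaluate the desired quantity at these two admissible values. When $\kappa = 1 + 2n/\lambda$ one has $2n + \lambda - \kappa\lambda = 0$, giving equality; when $\kappa = 1 + 1/\lambda$ one has $2n + \lambda - \kappa\lambda = 2n - 1 \geqslant 1 > 0$ for every $n \geqslant 1$. In either case the inequality $2n - \kappa\lambda + \lambda \geqslant 0$ holds, and by the identity established at the outset the second inequality of the statement follows at once. The only delicate point is the initial collapse of the two inequalities into a single one; after that, the argument reduces to solving a quadratic with a perfect-square discriminant and inspecting two roots.
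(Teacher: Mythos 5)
Your proof is correct and is essentially the same in spirit as the paper's: both couple the hypothesis $\theta = \kappa - 2/\lambda$ with the necessary condition \eqref{eq:NecessaryCond} from Lemma \ref{lemNECESSARY} (a hypothesis that is silent in the statement of this lemma but supplied by the classification context in which it is used) and reduce the matter to pinning down the admissible values of $\kappa$. The presentations differ, though. The paper argues by a case split: if $\kappa \geqslant 1 + 2n/\lambda$ then solving \eqref{eq:NecessaryCond} together with the linear relation forces $\kappa = 1 + 2n/\lambda$, so both quantities vanish; otherwise $\kappa < 1 + 2n/\lambda$ and the two strict inequalities are immediate. You instead observe at the outset that $\theta\lambda = \kappa\lambda - 2$ collapses the two target quantities into a single one, then explicitly solve the resulting quadratic $\lambda^2 a^2 - (2n+1)\lambda a + 2n = 0$ in $a = \kappa - 1$; the perfect-square discriminant $(2n-1)^2\lambda^2$ yields $\kappa = 1 + 2n/\lambda$ (equality) or $\kappa = 1 + 1/\lambda$ (giving $2n - \kappa\lambda + \lambda = 2n - 1 > 0$), and both satisfy the required inequality. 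Your route avoids the case split, is slightly more direct, and makes visible the second admissible root, which the paper's phrasing quietly discards under its case hypothesis $\kappa \geqslant 1 + 2n/\lambda$. Both arguments are sound, and both share the same implicit appeal to Lemma \ref{lemNECESSARY} beyond the literal hypotheses of the statement being proved.
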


\begin{proof}
Suppose that $\kappa \geqslant 1 + 2n/\lambda$ and hence $\theta \geqslant 1 + (2n - 2)/\lambda$, we make use of Lemma \ref{lemNECESSARY} to conclude that $\lambda>0$ and $ \kappa>0$ fulfill \eqref{eq:NecessaryCond}, that is, $(n - 1)/(n(\theta - 1)) +1/(\kappa - 1) = \lambda /n$. Resolving this equation with the condition $\theta = \kappa - 2/\lambda$ gives $\kappa = 1 + 2n/\lambda$ and $\theta = 1 + (2n - 2)/\lambda$. From this we obtain the equalities since $2n - \kappa \lambda + \lambda = 0$ and $2n -2 - \theta \lambda + \lambda = 0$.

Otherwise, there holds $\kappa < 1 + 2n/\lambda$ and hence $\theta < 1 + (2n - 2)/\lambda$. Form this, it is immediate to see that 
$2n - \kappa \lambda + \lambda > 0$ and $2n -2 - \theta \lambda + \lambda > 0$.
\end{proof}


\section{A classification of solutions of (\ref{eqIntegralSystem}): Proof of Proposition \ref{thmCLASSIFICATION}}

Recall that $\kappa = 1 + 2n/\lambda$ and that $\theta = 1 + (2n - 2)/\lambda$, thanks to \eqref{eq:NecessaryCond} and our hypothesis $\kappa =\lambda + 2/\theta$.

\subsection{The method of moving spheres for systems}

Let $w$ be a positive function on $\overline{\Rset_+^n}$ where we denote $\overline{\Rset_+^n} = \Rset_+^n \cup \partial \Rset_+^n$. For $x \in \partial \Rset_+^n$ and $\nu>0$ we define
\begin{equation}\label{eqFunctionChange}
w_{x,\nu}( \xi ) = \big( |\xi-x| / \nu \big)^\lambda  w( \xi^{x,\nu} )
\end{equation}
for all $\xi \in \overline{\Rset_+^n}$ where $\xi^{x,\nu}$ is the Kelvin transformation of $\xi$ with respect to the ball $B_{ \Rset_+^n} (x,\nu) \subset \Rset_+^n$, given as follows
\begin{equation}\label{eqVariableChange}
{\xi^{x,\nu}} = x + \nu^2\frac{\xi - x}{{| \xi - x |}^2}.
\end{equation}
It is important to note that in the whole moving spheres arguments in this section, only spheres centered on the boundary hyperplane $\partial \Rset_+^n$ can be used. This provides a reason why we cannot capture further information for $v$ out of $\partial \Rset_+^n$ as indicated in Proposition \ref{thmCLASSIFICATION}. Clearly, upon the change of variable $y=z^{x,\nu}$ with $z\in \overline{\Rset_+^n}$, we then have
\begin{equation}\label{eqJacobian}
dy=
\begin{cases}
\left( \nu /|z-x| \right)^{2n} dz &\text{ if } z \in \Rset_+^n\\
\left( \nu /|z-x| \right)^{2n-2} dz &\text{ if } z \in \partial\Rset_+^n.
\end{cases}
\end{equation}

\begin{lemma}\label{lem0}
For any solutions $(u,v)$ of \eqref{eqIntegralSystem}, we have
\[{u_{x,\nu}}(\xi ) = \int_{\Rset_+^n} | \xi - z|^\lambda v_{x,\nu} (z)^{ - \kappa} dz\]
for any $\xi \in \partial\Rset_+^n$ and
\[{v_{x,\nu}}(\eta ) = \int_{\partial\Rset_+^n} | \eta - z|^\lambda u_{x,\nu} (z)^{ - \theta} dz \]
for any $\eta \in \Rset_+^n$.
\end{lemma}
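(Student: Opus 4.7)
The plan is a direct computation based on the conformal/Kelvin change of variables. The one non-trivial ingredient I will need is the standard Kelvin identity
\[
|\xi^{x,\nu} - z^{x,\nu}| = \frac{\nu^2 |\xi - z|}{|\xi - x|\,|z - x|},
\]
valid for $\xi,z \neq x$. Applying it with $\xi$ replaced by $(\xi^{x,\nu})^{x,\nu}=\xi$, one gets the variant
\[
|\xi - z^{x,\nu}| = \frac{|\xi - x|}{|z - x|}\,|\xi^{x,\nu} - z|,
\]
which is what I will actually use. I will also crucially use the hypothesis, derived at the beginning of Section~5, that $\kappa = 1 + 2n/\lambda$ and $\theta = 1 + (2n-2)/\lambda$; the whole point of the computation is that these are precisely the exponents that make all stray factors of $|z-x|$ and $\nu$ cancel.

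For the first identity I would start from the definition \eqref{eqFunctionChange},
\[
u_{x,\nu}(\xi) = \Big(\frac{|\xi-x|}{\nu}\Big)^\lambda u(\xi^{x,\nu}) = \Big(\frac{|\xi-x|}{\nu}\Big)^\lambda \int_{\Rset_+^n} |\xi^{x,\nu}-y|^\lambda v(y)^{-\kappa}\,dy,
\]
then perform the change of variable $y = z^{x,\nu}$, which maps $\Rset_+^n$ onto itself because $x \in \partial\Rset_+^n$. Using the $n$-dimensional Jacobian in \eqref{eqJacobian} and the Kelvin identity above, together with the relation $v(z^{x,\nu}) = (\nu/|z-x|)^\lambda v_{x,\nu}(z)$ inverted from \eqref{eqFunctionChange}, the integrand becomes
\[
|\xi - z|^\lambda \, v_{x,\nu}(z)^{-\kappa} \cdot \nu^{\lambda - \lambda\kappa + 2n}\, |z-x|^{\lambda\kappa - \lambda - 2n}.
\]
Since $\lambda\kappa = \lambda + 2n$, both exponents vanish and the entire correction factor collapses to $1$, which yields exactly the first claim.

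The second identity is proved by the same mechanism, with two differences. First, the integration in the representation of $v(\eta^{x,\nu})$ is over $\partial\Rset_+^n$, so the Kelvin change of variable $y = z^{x,\nu}$ lives on the boundary hyperplane, which is preserved because $x \in \partial\Rset_+^n$, and one uses the $(2n-2)$-dimensional Jacobian from \eqref{eqJacobian}. Second, $u$ is now the function being transformed, so the analogous cancellation requires $\lambda\theta = \lambda + 2n-2$, which again holds by hypothesis.

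No real obstacle is expected: the proof is bookkeeping. The two subtleties worth double-checking are (i) that the Kelvin map indeed sends $\Rset_+^n$ to $\Rset_+^n$ and $\partial\Rset_+^n$ to $\partial\Rset_+^n$ when the center $x$ sits on the boundary (this is where the whole moving-spheres approach for the half-space hinges on restricting the centers to $\partial\Rset_+^n$), and (ii) that the two different Jacobians lead to two different critical values of the exponent, which is exactly what matches $\kappa = 1 + 2n/\lambda$ versus $\theta = 1 + (2n-2)/\lambda$.
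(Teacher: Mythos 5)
Your proof is correct and follows exactly the paper's own argument: apply the definition of $u_{x,\nu}$, substitute the integral representation of $u$, change variables $y=z^{x,\nu}$ (which preserves $\Rset^n_+$ since $x\in\partial\Rset^n_+$), and observe that the Jacobian and Kelvin factors combine into $(\nu/|z-x|)^{2n-\kappa\lambda+\lambda}$, which is identically $1$ because $\kappa=1+2n/\lambda$; the boundary identity for $v_{x,\nu}$ is the same computation with the $(2n-2)$-Jacobian and $\theta=1+(2n-2)/\lambda$. You merely spell out the Kelvin distance identity and the exponent bookkeeping a bit more explicitly than the paper does.
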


\begin{proof}
Using our system \eqref{eqIntegralSystem}, we obtain
\[\begin{split}
{u_{x,\nu}}(\xi ) =& {\left( {\frac {|\xi - x|}{\nu}} \right)^\lambda}u({\xi ^{x,\nu}}) ={\left( {\frac {|\xi - x|}{\nu}} \right)^\lambda}\int_{\Rset_+^n} | {\xi ^{x,\nu}} - y|^\lambda v{(y)^{ - \kappa}}dy\\
 =& \int_{\Rset_+^n} | \xi - z|^\lambda {\left( {\frac{\lambda } {|z - x|} } \right)^{2n - \kappa \lambda + \lambda}}{v_{x,\nu}}{(z)^{ - \kappa}}dz.
\end{split}\]
From this we obtain the desired formula for $u$, thanks to $2n - \kappa \lambda + \lambda =0$. The formula for $v$ follows the same line as above with a little difference since we need to integrate over $\partial\Rset_+^n$.
\end{proof}

Next, we estimate $u_{x,\nu}(\xi ) - u(\xi )$ and $v_{x,\nu}(\eta ) - v (\eta )$. Since the computation is elementary and well-known in other contexts, we omit its details.

\begin{lemma}\label{lem1}
For any solutions $(u,v)$ of \eqref{eqIntegralSystem} any $\lambda >0$ and any $x \in \partial\Rset_+^n$, we have
\[{u_{x,\nu}}(\xi ) - u(\xi ) = \int_{\Sigma_{x,\nu}^n}  k(x,\nu ;\xi ,z) \big[v(z)^{-\kappa} - {v_{x,\nu}}{(z)^{ - \kappa}} \big] dz,\]
for any $\xi \in \partial\Rset_+^n$ and
\[{v_{x,\nu}}(\eta ) - v (\eta ) = \int_{\Sigma_{x,\nu}^{n-1}} k(x,\nu ;\eta ,z) \big[ u(z)^{-\theta} - {u_{x,\nu}}{(z)^{ - \theta}} \big] dz,\]
for any $\eta \in \Rset_+^n$ where
\[k(x,\nu ;\zeta ,z) = \left( {\frac {|\zeta - x|}{\nu}} \right)^\lambda |{\zeta ^{x,\nu}} - z|^\lambda - |\xi - z|^\lambda .\]
Moreover, $k(x,\nu; \zeta, z)>0$ for any $|\zeta - x| > \lambda>0$ and $|z - x| > \lambda>0$.
\end{lemma}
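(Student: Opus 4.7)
The plan is to deduce both identities from Lemma \ref{lem0} by splitting each integral at the sphere of radius $\nu$ centered at $x$ and applying the Kelvin transform as a change of variables on the inner piece; the positivity of $k$ is then a separate algebraic check.

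More concretely, for the first identity I would start from the representations
\[
u(\xi) = \int_{\Rset_+^n}|\xi-z|^\lambda v(z)^{-\kappa}\,dz, \qquad u_{x,\nu}(\xi) = \int_{\Rset_+^n}|\xi-z|^\lambda v_{x,\nu}(z)^{-\kappa}\,dz,
\]
and split each integral as $\int_{B_{\Rset_+^n}(x,\nu)} + \int_{\Sigma_{x,\nu}^n}$. On the ball piece I would perform the change of variables $z = y^{x,\nu}$, with Jacobian $(\nu/|y-x|)^{2n}$ as given in \eqref{eqJacobian}. Two identities make the bookkeeping work: the involution property $v(y^{x,\nu}) = (\nu/|y-x|)^\lambda v_{x,\nu}(y)$ (read off from \eqref{eqFunctionChange}), and the conformal identity $|\xi - y^{x,\nu}| = (|\xi-x|/|y-x|)\,|\xi^{x,\nu}-y|$, which follows from the standard formula $|\xi^{x,\nu}-y^{x,\nu}| = \nu^2 |\xi-y|/(|\xi-x||y-x|)$. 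Combining these with the critical relation $\kappa\lambda = 2n + \lambda$ (from Lemma \ref{lem->0>0}) makes the accumulated power of $|y-x|$ vanish and leaves exactly the factor $(|\xi-x|/\nu)^\lambda$. After the same transformation is applied to the ball piece of $u(\xi)$, subtracting the two expressions yields
\[
u_{x,\nu}(\xi)-u(\xi) = \int_{\Sigma_{x,\nu}^n}\Big[(|\xi-x|/\nu)^\lambda|\xi^{x,\nu}-z|^\lambda - |\xi-z|^\lambda\Big]\big[v(z)^{-\kappa}-v_{x,\nu}(z)^{-\kappa}\big]dz,
\]
which is the desired formula. The argument for $v_{x,\nu}(\eta)-v(\eta)$ is identical except that the inversion now acts on $\partial\Rset_+^n$ (which is preserved because $x\in\partial\Rset_+^n$), so the Jacobian is $(\nu/|y-x|)^{2n-2}$; here the corresponding critical relation $\theta\lambda = 2n-2+\lambda$ is exactly what is needed to kill the $|y-x|$ factors.

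For the positivity of $k(x,\nu;\zeta,z)$ when $|\zeta-x|>\nu$ and $|z-x|>\nu$, I would expand
\[
(|\zeta-x|/\nu)^2|\zeta^{x,\nu}-z|^2 - |\zeta-z|^2
\]
by writing $\zeta^{x,\nu} - z = (\zeta^{x,\nu}-x) - (z-x)$ and using $|\zeta^{x,\nu}-x| = \nu^2/|\zeta-x|$. The cross terms involving $(\zeta-x)\cdot(z-x)$ cancel, and the remaining expression simplifies to
\[
\frac{(|\zeta-x|^2-\nu^2)(|z-x|^2-\nu^2)}{\nu^2},
\]
which is strictly positive in the stated region. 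Since $\lambda>0$, raising to the power $\lambda/2$ preserves strict inequality and gives $k>0$.

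The only real obstacle is the exponent accounting. Two of the ambient parameters ($2n$ or $2n-2$ from the Jacobian, together with $\kappa\lambda$ or $\theta\lambda$ from the pulled-back measure) must conspire so that all $|y-x|$ factors cancel and exactly the prefactor $(|\xi-x|/\nu)^\lambda$ survives; this cancellation is the precise reason the critical exponents $\kappa = 1+2n/\lambda$ and $\theta = 1+(2n-2)/\lambda$ appear in Proposition \ref{thmCLASSIFICATION}. I would therefore record the exponent computation explicitly once for each identity rather than assert it, to make the proof self-contained.
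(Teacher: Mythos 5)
Your proof is correct and supplies exactly the computation the paper declines to write out (the text says only ``Since the computation is elementary and well-known in other contexts, we omit its details''). The split-at-the-sphere-and-invert argument, the conformal identity $|\xi-y^{x,\nu}|=(|\xi-x|/|y-x|)\,|\xi^{x,\nu}-y|$, the Jacobian bookkeeping, and the algebraic factoring
\[
\Big(\frac{|\zeta-x|}{\nu}\Big)^2|\zeta^{x,\nu}-z|^2-|\zeta-z|^2=\frac{(|\zeta-x|^2-\nu^2)(|z-x|^2-\nu^2)}{\nu^2}
\]
are all right, and the monotonicity of $t\mapsto t^{\lambda/2}$ closes the positivity claim. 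One attribution slip: Lemma \ref{lem->0>0} gives only the one-sided bound $2n-\kappa\lambda+\lambda\geqslant 0$ (and $2n-2-\theta\lambda+\lambda\geqslant 0$); the exact relations $\kappa\lambda=2n+\lambda$ and $\theta\lambda=2n-2+\lambda$ that your cancellation relies on come from the standing hypothesis recalled at the top of this section, namely $\kappa=1+2n/\lambda$ and $\theta=1+(2n-2)/\lambda$, which are obtained from Lemma \ref{lemNECESSARY} together with the assumption $\kappa=\theta+2/\lambda$ (and the smoothness provided by Lemma \ref{lem-Regularity} so that Lemma \ref{lemNECESSARY} applies). Cite those, not Lemma \ref{lem->0>0}; the latter is invoked only later, in the proof of Lemma \ref{lem3}, to handle a hypothetical non-critical case. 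You also navigated the paper's typos correctly, which is worth recording explicitly if you write this up: in the displayed kernel the trailing $|\xi-z|^\lambda$ should read $|\zeta-z|^\lambda$; the positivity condition should say $|\zeta-x|>\nu$ and $|z-x|>\nu$ (the printed $\lambda$'s are leftover from an earlier choice of radius variable); and the superscripts $n$ and $n-1$ on the $\Sigma$'s in the lemma's statement are swapped relative to the definitions given at the start of Section 4 (the $z$-integral in the $u$ identity is over the portion of $\Rset_+^n$ outside the half-ball, and the one in the $v$ identity is over the portion of $\partial\Rset_+^n$ outside the $(n-1)$-ball).
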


In the following lemma, we prove that the method of moving spheres can get started starting from a very small radius.

\begin{lemma}\label{lemStartMS}
For each $x \in \partial \Rset_+^n$, there exists some $\nu_0(x)>0$ such that for any $\nu \in (0, \nu_0(x))$
\[{u_{x,\nu}}(\xi) \geqslant u(\xi)\]
for any point $\xi \in \Sigma_{x,\nu}^{n-1}$ and
\[{v_{x,\nu}}(\eta) \geqslant v(\eta)\]
for any point $\eta \in \Sigma_{x,\nu}^n$.
\end{lemma}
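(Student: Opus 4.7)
The plan is to reduce both inequalities simultaneously to a one-dimensional monotonicity statement. Set $U_x(\xi) := |\xi-x|^{-\lambda/2} u(\xi)$ on $\partial\Rset_+^n \setminus\{x\}$, with an analogous $V_x$ on $\overline{\Rset_+^n}\setminus\{x\}$ (the integral representation extends $v$ continuously to $\partial \Rset_+^n$). Using $|\xi^{x,\nu}-x| = \nu^2/|\xi-x|$ one verifies directly that
\[
u_{x,\nu}(\xi) - u(\xi) = |\xi-x|^{\lambda/2}\bigl(U_x(\xi^{x,\nu}) - U_x(\xi)\bigr),
\]
together with the analogous identity for $v$. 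Parametrizing $\xi = x + r\theta$ with $r > \nu$ and $\theta \in \mathbb S^{n-2}$, and setting $s = \nu^2/r \in (0,\nu)$, the desired $u_{x,\nu}(\xi) \geqslant u(\xi)$ becomes $h_\theta(s) \geqslant h_\theta(r)$ with $h_\theta(t) := t^{-\lambda/2} u(x + t\theta)$. It therefore suffices to establish this scalar comparison for every admissible pair $(s,r)$ with $sr = \nu^2$ and every direction, together with its counterpart for $v$ (directions now ranging in the closed upper half-sphere $\overline{\mathbb S^{n-1}_+}$).

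I would first handle the near-in regime $r \in (\nu, R_x]$. A direct differentiation gives
\[
h_\theta'(t) = t^{-\lambda/2-1}\bigl[-\tfrac{\lambda}{2}\, u(x+t\theta) + t\,\langle \nabla u(x+t\theta),\theta\rangle\bigr].
\]
Since $u$ is smooth by Lemma \ref{lem-Regularity} and $u(x) > 0$ by \eqref{eqGrowth3}, compactness of $\mathbb S^{n-2}$ produces a uniform $R_x > 0$ with $h_\theta'(t) < 0$ on $(0,R_x]$ for every $\theta$. Whenever $\nu < R_x$ and $r \in (\nu, R_x]$, both $s = \nu^2/r$ and $r$ lie in $(0, R_x]$ with $s < r$, so $h_\theta(s) \geqslant h_\theta(r)$. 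The identical reasoning handles $v$ with $\overline{\mathbb S^{n-1}_+}$ replacing $\mathbb S^{n-2}$.

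For the far regime $r > R_x$, I would use the growth bounds \eqref{eqGrowth3}--\eqref{eqGrowth4}. Enlarging $R_x$ so that also $R_x \geqslant \max\{1, 2|x|\}$, \eqref{eqGrowth3} yields $h_\theta(r) \lesssim r^{\lambda/2}$ uniformly in $\theta$. For $\nu$ sufficiently small, $s = \nu^2/r$ stays in a neighborhood of $0$ on which $u(x + s\theta) \geqslant u(x)/2$, giving $h_\theta(s) \geqslant (u(x)/2)\,s^{-\lambda/2}$. Combining, $h_\theta(s)/h_\theta(r) \gtrsim (sr)^{-\lambda/2} = \nu^{-\lambda}$, which exceeds $1$ provided $\nu$ is below a threshold depending only on $x$. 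The counterpart for $v$ follows from \eqref{eqGrowth4}. Choosing $\nu_0(x)$ to be the minimum of all thresholds thus produced yields the lemma. The main technical obstacle is maintaining uniformity of the constants across the angular variable $\theta$, which is handled by the regularity of $u,v$ (Lemma \ref{lem-Regularity}) together with compactness of the relevant direction spheres.
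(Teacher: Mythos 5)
Your proof is correct and takes essentially the same approach as the paper's: both arguments split into a near-in regime, where the $C^1$-regularity and positivity of $u$ (resp.\ $v$) are used to show that $|\xi-x|^{-\lambda/2}u(\xi)$ is radially decreasing on a small punctured ball around $x$, and a far regime, where the two-sided growth bounds \eqref{eqGrowth3}--\eqref{eqGrowth4} take over. Your reformulation in terms of the scalar function $h_\theta(t)=t^{-\lambda/2}u(x+t\theta)$ and the pair $(s,r)$ with $sr=\nu^2$ is a clean repackaging of the same mechanism the paper expresses via the radial derivative of $|\xi-x|^{-\lambda/2}u(\xi)$ and the comparison $u_{x,\nu}(\xi)\geqslant(|\xi-x|/\nu)^\lambda\inf_{B(x,r_0)}u$.
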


\begin{proof}
Since $u$ is a positive $C^1$-function in $\partial\Rset_+^n$ and $\lambda>0$, there exists some $r_0>0$ small enough such that
\[\nabla _\xi \big( |\xi-x|^{-\lambda/2} u(\xi) \big) \cdot (\xi-x) < 0\]
for all $\xi \in \partial\Rset_+^n$ with $0<|\xi-x|<r_0$. Consequently, we can estimate
\[
\begin{split}
u_{x,\nu} (\xi) =& {\left( {\frac{|\xi-x|}{\nu}} \right)^\lambda}u (\xi^{x,\nu}) = |\xi-x|^{\lambda/2} |\xi^{x,\nu}-x|^{-\lambda/2} u (\xi^{x,\nu})>  u(\xi)
\end{split}
\]
for all $\xi \in \partial\Rset_+^n$ with $0<\lambda < |\xi-x| < r_0$. Note that in the previous estimate, we made use of the fact that if $|\xi-x| > \lambda$ then $|\xi^{x,\nu} -x| < \lambda$. Note that for small $\nu_0 \in (0, r_0)$ and for each $0<\lambda<\nu_0$, we have
\[ 
u_{x, \lambda }(\xi) \geqslant {\left( {\frac{|\xi-x|}{\nu}} \right)^\lambda}\mathop {\inf }\limits_{B(x, r_0)} u \geqslant u(\xi)
\]
for all $|\xi-x| \geqslant r_0$. Hence, we have just shown that $u_{x, \lambda }(\xi) \geqslant u(\xi)$ for all point $\xi \in \partial\Rset_+^n$ and any $\lambda$ such that $|\xi-x| \geqslant \lambda$ with $0 < \nu < \nu_0$. A similar argument also shows that $v_{x, \lambda }(\eta) \geqslant v(\eta)$ for all point $\eta \in \Rset_+^n$ and any $\lambda$ such that $|\eta-x| \geqslant \lambda$ with $0 < \lambda< \lambda_1$ for some $\lambda_1 \in (0, r_1)$. Simply setting $\nu_0 (x) = \min\{ \nu_0, \lambda_1\}$ we obtain the desired result.
\end{proof}

For each $x \in \partial \Rset_+^n$ we define
\[\overline \nu (x) = \sup \left\{ {\mu > 0:{u_{x,\nu}}(\xi) \geqslant u(\xi), {v_{x,\nu}}(\eta) \geqslant v(\eta), \forall 0 < \nu < \mu , \xi \in \Sigma_{x,\nu}^{n-1} , \eta \in \Sigma_{x,\nu}^{n} } \right\}.\]
In view of Lemma \ref{lemStartMS} above, we get $0 < \overline \nu (x) \leqslant + \infty$. In the next few lemmas, we show that whenever $\nu (x)$ is finite for some point $x$, we can write down precisely the form of $(u,v)$.

\begin{lemma}\label{lem3}
If $\overline \nu (x_0) <\infty$ for some point $x_0 \in \partial\Rset_+^n$ then
\[u_{x_0,\overline \nu (x_0)} \equiv u, \quad {v_{{x_0},\overline \nu ({x_0})}} \equiv v\]
in $\partial \Rset_+^n$ and $\Rset_+^n$, respectively. In addition, we obtain $q = 1 + 2n/p$.
\end{lemma}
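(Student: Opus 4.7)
The plan is to proceed by contradiction, following the standard moving-spheres template adapted to the two-component system \eqref{eqIntegralSystem}. Writing $\overline\nu := \overline\nu(x_0)$ for brevity, the smoothness of $u$ and $v$ established in Lemma \ref{lem-Regularity} together with the definition of $\overline\nu$ as a supremum immediately give, upon passing to the limit, the weak inequalities $u_{x_0,\overline\nu} \geqslant u$ on $\Sigma_{x_0,\overline\nu}^{n-1}$ and $v_{x_0,\overline\nu} \geqslant v$ on $\Sigma_{x_0,\overline\nu}^{n}$. The task is to promote these to equalities.

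I would then suppose, toward a contradiction, that at least one inequality fails to be an identity; say $u_{x_0,\overline\nu}(\xi_\ast) > u(\xi_\ast)$ at some $\xi_\ast \in \Sigma_{x_0,\overline\nu}^{n-1}$ (the other case is symmetric). Since $\theta > 0$, this gives $u(\xi_\ast)^{-\theta} - u_{x_0,\overline\nu}(\xi_\ast)^{-\theta} > 0$, and continuity preserves the strict inequality on a neighborhood of $\xi_\ast$. Inserting into the integral representation of Lemma \ref{lem1} for $v_{x_0,\overline\nu}(\eta) - v(\eta)$, and exploiting that the kernel $k(x_0,\overline\nu;\eta,z)$ is strictly positive whenever $|\eta-x_0|,|z-x_0| > \overline\nu$, one obtains $v_{x_0,\overline\nu}(\eta) > v(\eta)$ strictly for every $\eta \in \Sigma_{x_0,\overline\nu}^{n}$. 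Feeding this back into the corresponding formula of Lemma \ref{lem1} for $u_{x_0,\overline\nu}(\xi) - u(\xi)$ propagates the strict inequality to every $\xi \in \Sigma_{x_0,\overline\nu}^{n-1}$.

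The next step is to show that these strict inequalities survive a small increase of $\nu$, contradicting the maximality of $\overline\nu$. I would split into two regimes. On any compact annulus $\{R_1 \leqslant |\cdot - x_0| \leqslant R\}$ (intersected with $\partial\Rset_+^n$ or $\Rset_+^n$), continuous dependence of $(u_{x_0,\nu}, v_{x_0,\nu})$ on $\nu$ combined with the uniform positive gap furnished above preserves the strict inequalities for $\nu = \overline\nu + \varepsilon$ with $\varepsilon$ small. For the far-field region $|\cdot - x_0| > R$ I would use the asymptotics from Lemma \ref{lem-Growth}, namely
\[
u(\xi) \sim |\xi-x_0|^\lambda \int_{\Rset_+^n} v^{-\kappa} \, dy, \qquad u_{x_0,\nu}(\xi) \sim \Big(\frac{|\xi-x_0|}{\nu}\Big)^\lambda u(x_0),
\]
together with the analogous identities for $v$. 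The strict inequality $u_{x_0,\overline\nu} > u$ at infinity requires the leading-coefficient comparison $u(x_0)\,\overline\nu^{-\lambda} \geqslant \int_{\Rset_+^n} v^{-\kappa}$, and a quantitative version of these asymptotics then shows that this comparison is stable under small perturbations of $\nu$, producing the desired contradiction.

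The principal technical obstacle is precisely this final far-field step: one only has asymptotic equivalences at leading order, and one must control the remainder uniformly in $\nu$ near $\overline\nu$ so that the leading-order strict inequality translates into an honest pointwise inequality at large but finite distance. This requires pairing Lemma \ref{lem-Growth} with the kernel structure in Lemma \ref{lem1} in a quantitative way. Once the contradiction is reached, $u \equiv u_{x_0,\overline\nu}$ and $v \equiv v_{x_0,\overline\nu}$ are established. The final exponent clause (displayed in the statement as $q = 1+2n/p$, most likely a typographical slip for $\kappa = 1 + 2n/\lambda$) then follows by inserting the identity $u = u_{x_0,\overline\nu}$ into the asymptotic expansion of Lemma \ref{lem-Growth} and matching leading coefficients, combined with the critical condition of Lemma \ref{lemNECESSARY} and the hypothesis $\kappa = \theta + 2/\lambda$.
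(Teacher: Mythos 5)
Your proposal correctly identifies the overall moving-spheres dichotomy, and the dichotomy you set up---either both $u_{x_0,\overline\nu}\geqslant u$ and $v_{x_0,\overline\nu}\geqslant v$ are identities, or both are strict everywhere outside the sphere, the latter to be refuted by pushing $\nu$ slightly past $\overline\nu$---matches the paper's Case~1/Case~2 structure. Your reading of the displayed exponent clause as a typo for $\kappa = 1 + 2n/\lambda$ is also correct. However, there is a genuine gap in the near-sphere estimate, which you underestimate while overestimating the far-field difficulty.

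The far-field step is actually the easy part: the paper applies Fatou's lemma directly to the integral representation from Lemma~\ref{lem1} to get
$\liminf_{|\xi|\to\infty} |\xi|^{-\lambda}(u_{x_0,\overline\nu}-u)(\xi) \geqslant \int_{\Sigma_{x_0,\overline\nu}^n} \big((|z|/\overline\nu)^\lambda-1\big)\big[v^{-\kappa}-v_{x_0,\overline\nu}^{-\kappa}\big]dz > 0$,
which is already quantitative and uniform. The genuinely delicate region is the thin annulus $\nu \leqslant |\xi - x_0| \leqslant \overline\nu(x_0)+1$ for $\nu$ slightly above $\overline\nu(x_0)$, and here your argument breaks down. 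You appeal to ``continuous dependence \dots combined with the uniform positive gap'' on a compact annulus $\{R_1 \leqslant |\cdot - x_0| \leqslant R\}$, but there is \emph{no} uniform positive gap near the sphere: $u_{x_0,\nu}$ and $u$ agree exactly on $|\xi-x_0|=\nu$ (since $\xi^{x_0,\nu}=\xi$ there), so $(u_{x_0,\nu}-u)(\xi)\to 0$ as $\xi$ approaches the sphere, and this zero set slides outward as $\nu$ increases. A fixed $R_1 > \overline\nu$ leaves the annulus $\nu \leqslant |\xi-x_0| < R_1$ untreated, and on that annulus a blunt continuity argument cannot produce a nonnegative lower bound. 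The missing idea is the one the paper exploits: the kernel $k(x_0,\nu;\xi,z)$ vanishes \emph{linearly} on the sphere with strictly positive normal derivative, $\nabla_\xi k\cdot\xi\big|_{|\xi|=\nu} > 0$ for $|z|$ in a fixed outer annulus; one then splits the integral in Lemma~\ref{lem1} into a ``near'' part $I$ (over $\nu \leqslant |z-x_0|\leqslant\overline\nu+1$), bounded below by $-C\varepsilon\,(|\xi-x_0|-\nu)$, and a ``far'' part $II$ (over $\overline\nu+2\leqslant|z-x_0|\leqslant\overline\nu+3$), bounded below by $\delta_1\delta_2\,(|\xi-x_0|-\nu)$ with $\delta_1,\delta_2$ independent of $\varepsilon$. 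Both contributions vanish linearly at the sphere at matching rates, which is precisely what a uniform-gap-plus-continuity argument cannot see. Without this kernel-derivative estimate and the $I+II$ decomposition, the contradiction does not close.

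A secondary point: your contradiction hypothesis handles only the case where one inequality is strict somewhere. You should also record (as the paper's Case~1 does) that if, say, $u_{x_0,\overline\nu}\equiv u$ on $\Sigma_{x_0,\overline\nu}^{n-1}$ but one has not yet assumed the identity for $v$, then the representation formula in Lemma~\ref{lem1}, together with the positivity of the kernel, forces $v_{x_0,\overline\nu}\equiv v$ on $\Sigma_{x_0,\overline\nu}^n$ as well (and vice versa), and it is at this stage that the equalities $2n-\kappa\lambda+\lambda=0$ and $2n-2-\theta\lambda+\lambda=0$ (hence $\kappa = 1+2n/\lambda$) drop out of Lemma~\ref{lem->0>0}.
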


\begin{proof}
By the definition of $\overline \nu (x_0)$, we know that
\begin{equation}\label{eqProof0}
{u_{x_0,\overline \nu (x_0)}}(\xi) \geqslant u(\xi), \quad {v_{x_0,\overline \nu (x_0)}}(\eta) \geqslant v(\eta)
\end{equation}
for any $\xi \in \Sigma_{x_0,\overline \nu (x_0)}^{n-1}$ and $\eta \in \Sigma_{x_0,\overline \nu (x_0)}^n$. In view of Lemma \ref{lem1}, we obtain
\begin{equation}\label{eqProof1}
\begin{split}
{u_{x_0,\overline \nu (x_0)}}(\xi) - u(\xi) = \int_{ \Sigma_{x_0,\overline \nu (x_0)}^n } & k( x_0,\overline \nu (x_0);\xi,z ) \big[v(z)^{-\kappa} - {v_{x_0,\overline \nu (x_0)}} (z)^{ -\kappa} \big] dz,
\end{split}
\end{equation}
and
\begin{equation}\label{eqProof2}
\begin{split}
{v_{x_0,\overline \nu (x_0)}}(\eta) - v(\eta) = \int_{ \Sigma_{x_0,\overline \nu (x_0)}^{n-1} } & k( x_0,\overline \nu (x_0);\eta,z ) \big[u(z)^{-\theta} - {u_{x_0,\overline \nu (x_0)}}{(z)^{ - \theta}}\big] dz.
\end{split}
\end{equation}
Keep in mind that $2n - \kappa \lambda + \lambda \geqslant 0$ and $2n-2 -\theta \lambda + \lambda \geqslant 0$ by Lemma \ref{lem->0>0}; hence there are two possible cases:

\noindent\textbf{Case 1}. Either ${u_{x_0,\overline \nu (x_0)}}(\xi) = u(\xi)$ for any $\xi \in \Sigma_{x_0,\overline \nu (x_0)}^{n-1}$ or ${v_{x_0,\overline \nu (x_0)}}(\eta) = v(\eta)$ for any $\eta \in \Sigma_{x_0,\overline \nu (x_0)}^n$. Without loss of generality, we assume that the formal case occurs. Using \eqref{eqProof1} and the positivity of the kernel $k$, we get that $2n - \kappa \lambda + \lambda=0$ and that ${v_{x_0,\overline \nu (x_0)}}(\eta) = v(\eta)$ for any $\eta \in \Sigma_{x_0,\overline \nu (x_0)}^n$. Hence by \eqref{eqProof1} we conclude that ${u_{x_0,\overline \nu (x_0)}}(\xi) = u(\xi)$ in the whole $\partial\Rset_+^n$. A similar argument also shows that $2n-2 -\theta \lambda + \lambda=0$ and that ${v_{x_0,\overline \nu (x_0)}}(\eta) = v(\eta)$ in $\Rset_+^n$ and we are done.

\noindent\textbf{Case 2}. Or ${u_{x_0,\overline \nu (x_0)}}(\xi) > u(\xi)$ for any $\xi \in \Sigma_{x_0,\overline \nu (x_0)}^{n-1}$ and ${v_{x_0,\overline \nu (x_0)}}(\eta) > v(\eta)$ for any $\eta \in \Sigma_{x_0,\overline \nu (x_0)}^n$. In this case, we derive a contradiction by showing that we can slightly move spheres a little bit over $\overline \nu (x_0)$ which then violates the definition of $\overline \nu (x_0)$.

In order to achieve that goal, first we can estimate
\begin{equation}\label{eqProof3}
\begin{split}
{u_{x_0,\overline \nu (x_0)}}(\xi) &- u(\xi) \geqslant \int_{\Sigma_{x_0,\overline \nu (x_0)}^n}  k( x_0,\overline \nu (x_0);\xi, z )\Big [v(z)^{-\kappa} - v_{x_0,\overline \nu (x_0)} (z)^{-\kappa}\Big] dz,
\end{split}
\end{equation}
thanks to the positivity of the kernel $k$.

\noindent\textbf{Estimate of $u_{x_0, \nu} - u$ outside $B_{\partial\Rset_+^n}(x_0,\overline \nu (x_0) + 1)$.} First, by the Fatou lemma and \eqref{eqProof3}, we obtain
\[\begin{split}
 \mathop {\lim \inf }\limits_{|\xi| \to +\infty }& \big( |\xi|^{-\lambda} ({u_{x_0,\overline \nu (x_0)}} - u)(\xi) \big) \\
\geqslant & \mathop {\lim \inf }\limits_{|\xi| \to +\infty } \int_{\Sigma_{x_0,\overline \nu (x_0)}^n} |\xi|^{-\lambda} k( x_0,\overline \nu (x_0);\xi,z ) \Big[v(z)^{-\kappa} - v_{x_0,\overline \nu (x_0)} (z)^{-\kappa}\Big]dz \\
 \geqslant& \int_{\Sigma_{x_0,\overline \nu (x_0)}^n} \big( {{\big(  |z| / \overline \nu (x_0)  \big) ^\lambda} - 1} \big) \big[v(z)^{-\kappa} - v_{x_0,\overline \nu (x_0)} (z)^{-\kappa} \big]dz >  0.
\end{split} \]
As a consequence, outside a large ball, we would have $({u_{x_0,\overline \nu (x_0)}} - u)(\xi) \gtrsim |\xi|^\lambda$ while in that ball and outside of $B(x_0, \overline \nu (x_0) + 1)$ we would also have $(u_{x_0,\overline \nu (x_0)} - u)(\xi) \gtrsim |\xi|^\lambda$ thanks to the smoothness of $u_{x_0,\overline \nu (x_0)} - u$ and our assumption $u_{x_0,\overline \nu (x_0)}(\xi) > u(\xi)$. Therefore, there exists some $\varepsilon_1 >0$ such that
\[({u_{x_0,\overline \nu (x_0)}} - u)(\xi) \geqslant {\varepsilon _1}|\xi|^\lambda\]
for all $|\xi-x_0| \geqslant \overline \nu (x_0) + 1$. Recall that ${u_{x_0,\overline \nu (x_0)}}(\xi) = (|x_0 - \xi|/\lambda)^\lambda u({\xi^{x_0,\overline \nu (x_0)}})$; hence there exists some $\varepsilon_2 \in (0, \varepsilon_1)$ such that
\begin{equation}\label{eqProof4}
\begin{split}
 (u_{x_0, \nu} - u)(\xi) =& ({u_{x_0,\overline \nu (x_0)}} - u)(\xi) + ({u_{x_0, \nu}} - {u_{x_0,\overline \nu (x_0)}})(\xi) \\
 \geqslant& {\varepsilon _1}|\xi|^\lambda + ({u_{x_0, \nu}} - {u_{x_0,\overline \nu (x_0)}})(\xi) \geqslant \frac{{{\varepsilon _1}}}{2}|\xi|^\lambda 
\end{split}
\end{equation}
for all $|\xi-x_0| \geqslant \overline \nu (x_0) + 1$ and all $\lambda \in (\overline \nu (x_0),\overline \nu (x_0) + {\varepsilon _2})$. Repeating the above arguments shows that \eqref{eqProof4} is also valid for $v_{x_0, \nu} - v$, that is
\begin{equation}\label{eqProof4-ForV}
\begin{split}
 (v_{x_0, \nu} - v)(\eta) \geqslant \frac{{{\varepsilon _1}}}{2}|\eta|^\lambda 
\end{split}
\end{equation}
for a possibly new constant $\varepsilon_1>0$.

\noindent\textbf{Estimate of $u_{x_0, \nu} - u$ inside $B_{\partial\Rset_+^n}(x_0,\overline \nu (x_0) + 1)$.} Now for $\varepsilon \in (0,{\varepsilon _2})$ to be determined later and for $\lambda \in (\overline \nu (x_0),\overline \nu (x_0) + \varepsilon ) \subset (\overline \nu (x_0),\overline \nu (x_0) + {\varepsilon _2})$ and for $\lambda \leqslant |\xi-x_0| \leqslant \overline \nu (x_0) + 1$, from \eqref{eqProof3}, we estimate
\[\begin{split}
 ({u_{x_0, \nu}} - u)(\xi) \geqslant& \int_{\Sigma_{x_0,\overline \nu (x_0)}^n} k( x_0, \nu;\xi,z)[v(z)^{-\kappa} - {v_{x_0, \nu}}{(z)^{ - \kappa}}]dz \\
 \geqslant &\int_{\overline \nu (x_0) + 1 \geqslant |z - x_0| \geqslant \lambda } k( x_0, \nu; \xi , z)[v(z)^{-\kappa} - {v_{x_0, \nu}}{(z)^{ - \kappa}}]dz \\
 & + \int_{\overline \nu (x_0) + 3 \geqslant |z - x_0| \geqslant \overline \nu (x_0)+ 2} k( x_0, \nu; \xi , z)[v(z)^{-\kappa} - {v_{x_0, \nu}}{(z)^{ - \kappa}}]dz \\
 \geqslant &\int_{\overline \nu (x_0) + 1 \geqslant |z - x_0| \geqslant \lambda } k( x_0, \nu; \xi , z)[v_{x_0,\overline \nu (x_0)} (z)^{-\kappa} - {v_{x_0, \nu}}{(z)^{ - \kappa}}]dz \\
 &+ \int_{\overline \nu (x_0) + 3 \geqslant |z - x_0| \geqslant \overline \nu (x_0)+ 2} k( x_0, \nu; \xi , z)[v(z)^{-\kappa} - {v_{x_0, \nu}}{(z)^{ - \kappa}}]dz \\
=& I + II. 
\end{split} \]
As we shall see later, there holds $I+II \geqslant 0$ provided $\varepsilon >0$ is small enough. To see this, we estimate $I$ and $II$ term by term.

\noindent\textbf{Estimate of $II$}. Thanks to \eqref{eqProof4-ForV}, there exists $\delta_1>0$ such that $\big( v^{-\kappa} - v_{x_0,\nu}^{-\kappa} \big)(z) \geqslant \delta_1$ for any $\overline \nu (x_0) +2 \leqslant |z-x_0| \leqslant \overline \nu (x_0) +3$. By the definition of $k$ given in Lemma \ref{lem1} we note that 
\[
k(x_0,\nu; \xi ,z)=k(0, \nu; \xi -x_0,z -x_0)
\]
and that
\[
\nabla _\xi k(0, \nu ;\xi ,z) \cdot y \big|_{|\xi| = \nu } = p |\xi -z| ^{\lambda-2} \big(|z|^2 - |\xi|^2\big) > 0
\]
for all $\overline \nu (x_0) +2 \leqslant |z| \leqslant \overline \nu (x_0) +3$. Hence, there exists some constant $\delta_2>0$ independent of $\varepsilon$ such that
\[
k(0, \nu ;\xi,z) \geqslant {\delta _2}(|\xi| - \nu )
\]
for all $\overline \nu (x_0) \leqslant \nu \leqslant |\xi| \leqslant \overline \nu (x_0) + 1$ and all $\overline \nu (x_0) + 2 \leqslant |z| \leqslant \overline \nu (x_0) + 3$. Simply replacing $y$ by $y-x_0$ and $z$ by $z-z_0$ and making use of the rule $k(x_0,\nu; \xi,z)=k(0, \nu; \xi-x_0,z -x_0)$, we obtain with the same constant $\delta_2>0$ as above the following estimate
\[k( x_0, \nu; \xi , z) \geqslant \delta_2 (|\xi-x_0| - \nu )\]
for all $\overline \nu (x_0) \leqslant \nu \leqslant |y-x_0| \leqslant \overline \nu (x_0) + 1$ and all $\overline \nu (x_0) + 2 \leqslant |z-x_0| \leqslant \overline \nu (x_0) + 3$. Thus, we have just proved that
\begin{equation}\label{eqEstimateII}
II \geqslant \delta_1 \delta_2 (|\xi-x_0|-\nu) \int_{\overline \nu (x_0) + 3 \geqslant |z - x_0| \geqslant \overline \nu (x_0) + 2} dz.
\end{equation}

\noindent\textbf{Estimate of $I$}. To estimate $I$, we first observe that $| v_{x_0, \nu } ^{ - \kappa} - {v^{ - \kappa}}|(z) \lesssim \nu - \overline \nu (x_0) \lesssim \varepsilon$ for all $z$ satisfying $\overline \nu (x_0) \leqslant \nu \leqslant |z-x_0| \leqslant \overline \nu (x_0) + 1$ and all $\overline \nu (x_0) \leqslant \nu \leqslant \overline \nu (x_0) + \varepsilon$ and that
\[\begin{split}
 \int_{\lambda \leqslant |z-x_0| \leqslant \overline \nu (x_0)+ 1} & {k( x_0, \nu; \xi , z)dz} \\
= & \int_{\lambda \leqslant |z| \leqslant \overline \nu(x_0) + 1} {k(0, \nu ; \xi-x_0,z)dz}\\
\leqslant & \int_{\lambda \leqslant |z| \leqslant \overline \nu (x_0)+ 1} \Big| \big(  |\xi-x_0| /\lambda \big)^\lambda-1 \Big| { |(\xi-x_0)^{0,\lambda } - z|^\lambda dz} \\
& + \int_{\lambda \leqslant |z| \leqslant \overline \nu(x_0) + 1} { \big(|(\xi-x_0)^{0,\lambda } - z|^\lambda - |(\xi-x_0) - z|^\lambda \big)dz} \\
 \leqslant& C(|\xi-x_0| - \lambda ) + C|{(\xi-x_0)^{0,\lambda }} - (\xi-x_0)| \\
 \leqslant & C(|\xi-x_0| - \lambda ).
\end{split} \]
where $C>0$ is constant independent of $\varepsilon$.
Thus, we obtain
\begin{equation}\label{eqEstimateI}
I \geqslant -C\varepsilon \int_{\overline \nu (x_0) + 1 \geqslant |z - x_0| \geqslant \lambda } k( x_0, \nu; \xi , z) dz.
\end{equation}
By combining \eqref{eqEstimateI} and \eqref{eqEstimateII}, for some small $\varepsilon>0$ we have
\[\begin{split}
 ({u_{x_0, \nu }} - u)(\xi) \geqslant& \Big( \delta_1 \delta_2 \int_{\overline \nu (x_0) + 3 \geqslant |z - x_0| \geqslant \overline \nu (x_0) + 2} dz -C\varepsilon \Big) (|\xi-x_0|-\nu) \geqslant 0
\end{split} \]
for $\overline \nu (x_0) \leqslant \nu \leqslant \overline \nu (x_0) + \varepsilon$ and $\nu \leqslant |y-x_0| \leqslant \overline \nu (x_0) + 1$. 

\noindent\textbf{Estimates of $u_{x_0, \nu} - u$ in $B_{\partial\Rset_+^n}(x_0,\overline \nu (x_0) + 1)$ and $v_{x_0, \nu} - v$ in $B_{\Rset_+^n}(x_0,\overline \nu (x_0) + 1)$.} Combining the preceding estimate for $u_{x_0, \nu} - u$ inside the ball $B(x_0, \overline \nu (x_0) + 1)$ and \eqref{eqProof4} above gives
\[\begin{split}
 ({u_{x_0, \nu}} - u)(\xi) \geqslant 0
\end{split} \]
for $\overline \nu (x_0) \leqslant \nu \leqslant \overline \nu (x_0) + \varepsilon$ and $\nu \leqslant |y-x_0|$. Again by repeating the whole procedure above for the difference $v_{x_0,\nu} -v$, we can conclude that 
$$(v_{x_0,\nu} -v)(y) \geqslant 0$$ 
for $\overline \nu (x_0) \leqslant \nu \leqslant \overline \nu (x_0) + \varepsilon$ and $\nu \leqslant |y-x_0|$ where $\varepsilon$ could be smaller if necessary; thus giving us a contradiction to the definition of $\overline\nu (x_0)$.
\end{proof}

In the last lemma of the current section, we prove that whenever $\overline \nu (x_0) <\infty$ for some point $x_0 \in \partial \Rset_+^n$, there must hold $\overline \nu (x) <\infty$ for any point $x \in \partial \Rset_+^n$.

\begin{lemma}\label{lemLamdaVanishAtEveryPoint}
If $\overline \nu (x_0) <\infty$ for some point $x_0 \in \partial \Rset_+^n$ then $\overline \nu (x) <\infty$ for any point $x \in \partial \Rset_+^n$; hence
\[{u_{x,\overline \nu (x)}} \equiv u, \quad {v_{x,\overline \nu (x)}} \equiv v\]
for all $x \in \partial \Rset_+^n$.
\end{lemma}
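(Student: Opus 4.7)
My plan is to argue by contradiction at a single point to rule out $\overline{\nu}(x) = +\infty$, using the far-field asymptotics of $u$ established in Lemma \ref{lem-Growth}. Once I show that $\overline{\nu}(x) < +\infty$ for every $x \in \partial \Rset_+^n$, the two identities $u_{x,\overline{\nu}(x)} \equiv u$ and $v_{x,\overline{\nu}(x)} \equiv v$ follow by invoking Lemma \ref{lem3} pointwise. Notice that the hypothesis $\overline{\nu}(x_0) < +\infty$ at one specific point $x_0$ never truly enters the argument that rules out $\overline{\nu}(x) = +\infty$ at another point; the lemma is therefore in effect unconditional, and the assumption on $x_0$ is only a cue that we are in the situation where moving spheres can be completed.

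For the contradiction step, I suppose $\overline{\nu}(x) = +\infty$ for some $x \in \partial \Rset_+^n$. Then by the very definition of $\overline{\nu}(x)$, for every $\nu > 0$ and every $\xi \in \Sigma_{x,\nu}^{n-1}$ one has $u_{x,\nu}(\xi) \geqslant u(\xi)$, namely
\[
\Big(\frac{|\xi-x|}{\nu}\Big)^\lambda u(\xi^{x,\nu}) \geqslant u(\xi).
\]
Fix $\nu > 0$ and divide both sides by $|\xi-x|^\lambda$. As $|\xi| \to +\infty$, the image point satisfies $|\xi^{x,\nu} - x| = \nu^2/|\xi-x| \to 0$, so by the smoothness of $u$ (Lemma \ref{lem-Regularity}) we obtain $u(\xi^{x,\nu}) \to u(x)$. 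Meanwhile $|\xi-x|/|\xi| \to 1$, so by \eqref{eqGrowth2} in Lemma \ref{lem-Growth},
\[
\frac{u(\xi)}{|\xi-x|^\lambda} \longrightarrow \int_{\Rset_+^n} v(y)^{-\kappa}\,dy =: C'.
\]
Passing to the limit in the previous display therefore yields $u(x) \geqslant C'\,\nu^\lambda$ for every $\nu > 0$, which is the heart of the contradiction.

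The only mild obstacle is verifying that $C' > 0$, but this is an immediate consequence of the two-sided bound \eqref{eqGrowth3}: since $u(z) \geqslant (1+|z|^\lambda)/C$, the limit of $u(z)/|z|^\lambda$, which equals $C'$ by \eqref{eqGrowth2}, must satisfy $C' \geqslant 1/C > 0$. With $C' > 0$ in hand, letting $\nu \to +\infty$ in $u(x) \geqslant C'\,\nu^\lambda$ forces $u(x) = +\infty$, contradicting the finiteness of $u$ at $x$ also guaranteed by \eqref{eqGrowth3}. Therefore $\overline{\nu}(x) < +\infty$ for every $x \in \partial \Rset_+^n$, and applying Lemma \ref{lem3} at each $x$ delivers $u_{x,\overline{\nu}(x)} \equiv u$ in $\partial \Rset_+^n$ and $v_{x,\overline{\nu}(x)} \equiv v$ in $\Rset_+^n$, finishing the argument.
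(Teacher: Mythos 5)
Your argument is correct, and in fact it proves something stronger than the lemma states. The paper's proof uses the hypothesis $\overline\nu(x_0)<\infty$ in an essential way: via Lemma~\ref{lem3} the identity $u_{x_0,\overline\nu(x_0)}\equiv u$ produces the limit \eqref{eq14}, $\lim_{|\xi|\to\infty}|\xi|^{-\lambda}u(\xi)=\overline\nu(x_0)^{-\lambda}u(x_0)$, which is then compared with the $\liminf$ estimate \eqref{eq16} at an arbitrary $x$ to force $\overline\nu(x)<\infty$. You bypass this by invoking the asymptotics \eqref{eqGrowth2} in Lemma~\ref{lem-Growth}, which already identify $\lim_{|\xi|\to\infty}|\xi|^{-\lambda}u(\xi)=\int_{\Rset_+^n}v^{-\kappa}\,dy$ for any solution and, combined with the lower bound in \eqref{eqGrowth3}, show this limit is strictly positive; from $u_{x,\nu}(\xi)\geqslant u(\xi)$ (valid for every $\nu$ when $\overline\nu(x)=\infty$) you then derive $u(x)\geqslant C'\nu^\lambda$ for all $\nu>0$, an immediate contradiction with $u(x)<\infty$. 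The two arguments thus funnel through the same quantity --- a positive far-field limit of $|\xi|^{-\lambda}u(\xi)$ compared against $\nu^{-\lambda}u(x)$ --- but you obtain that limit unconditionally from \eqref{eqGrowth2} rather than conditionally from \eqref{eq14}. One consequence you correctly flag: your version of the lemma is unconditional, so Case~1 in the paper's proof of Proposition~\ref{thmCLASSIFICATION} (ruling out $\overline\nu\equiv\infty$ via Lemmas~\ref{lemKey1} and \ref{lemKey3} and an integrability contradiction for $v$) becomes superfluous, since $\overline\nu(x)=\infty$ is already impossible at any individual point. This is a genuine, if modest, streamlining of the paper's Case~1/Case~2 structure.
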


\begin{proof}
Suppose that there exists some $x_0 \in \partial \Rset_+^n$ such that $\overline\nu (x_0)<\infty$, by Lemma \ref{lem3} and for $\xi \in \partial\Rset_+^n$ with $|\xi|$ sufficiently large, we have
\begin{align*}
 {| \xi |^{-\lambda} }u(y) &= | \xi |^{-\lambda} u_{x_0,\overline \nu (x_0)} (\xi)\\
& = {| \xi |^{-\lambda} }{\left( {\frac{{\overline \nu ({x_0})}}{{\left| {\xi - {x_0}} \right|}}} \right)^{-\lambda} }  u\Big( {{x_0} + \lambda {{({x_0})}^2}\frac{\xi - x_0}{{{{\left| {\xi - x_0} \right|}^2}}}} \Big) \\
 &= \overline \nu {({x_0})^{-\lambda}}{\left( {\frac{{\left| \xi- x_0 \right|}}{| \xi |}} \right)^\lambda } u\Big( {{x_0} + \lambda {{({x_0})}^2}\frac{{\xi - x_0}}{{{{\left| {\xi - x_0} \right|}^2}}}} \Big) 
\end{align*}
which implies
\begin{equation}\label{eq14}
\mathop {\lim }\limits_{| \xi | \to +\infty } {| \xi |^{-\lambda} }u(\xi) = \overline \nu {({x_0})^{-\lambda}}u({x_0}).
\end{equation}
Repeating the above argument then gives
\begin{equation}\label{eq15}
\mathop {\lim }\limits_{| \eta | \to +\infty } {| \eta |^{-\lambda} } v(\eta) = \overline \nu {({x_0})^{-\lambda}} v(x_0).
\end{equation}
Let $x \in \partial \Rset_+^n$ be arbitrary, by the definition of $\overline\nu (x)$ we get
\[u_{x,\nu} (\xi) \geqslant u(\xi), \quad v_{x,\nu} (\eta) \geqslant v(\eta) \]
for all $0 < \nu < \overline \nu (x)$ and all $\xi \in \Sigma_{x, \nu }^{n-1}$ and $\eta \in \Sigma_{x, \nu }^n$. Then by a direct computation and thanks to \eqref{eq14}, one can easily see that
\begin{equation}\label{eq16}
\begin{split}
\mathop {\liminf }\limits_{| \xi | \to +\infty } {| \xi |^{-\lambda}}u(\xi) &\leqslant \mathop {\lim \inf }\limits_{| \xi | \to +\infty } {| \xi |^{-\lambda}}{u_{x,\nu}}(\xi) \\
&= \mathop {\liminf }\limits_{| \xi | \to +\infty } {| \xi |^{-\lambda} }{\left( {\frac{\nu}{{\left| {\xi - x} \right|}}} \right)^{-\lambda} }u\Big( {x + \nu^2 \frac{\xi-x}{{{{\left| {\xi - x } \right|}^2}}}} \Big) \\
&= \nu ^{-\lambda} u(x)
\end{split}
\end{equation}
for all $0 < \lambda < \overline \nu (x)$. Combining \eqref{eq14} and \eqref{eq16} gives $ \overline \nu (x_0)^{-\lambda} u( x_0 ) \leqslant \nu ^{-\lambda} u(x)$ for all $0 < \lambda < \overline \nu (x)$. From this, we conclude $\overline \nu (x) < +\infty$ for all $x \in {\Rset^n}$ as claimed.
\end{proof}

\subsection{Proof of Proposition \ref{thmCLASSIFICATION}}

To conclude Proposition \ref{thmCLASSIFICATION}, we make use of the following three lemmas from \cite[Appendix B]{lz2003}. The first lemma concerns functions $f$ satisfying the inequality $f_{x,\nu}(\xi) \leqslant f(\xi)$, which can be the case in view of Lemma \ref{lemStartMS}.

\begin{lemma}\label{lemKey1}
For $\nu \in \Rset$ and $f$ a function defined on $\partial\Rset_+^n$ and valued in $[-\infty, +\infty]$ satisfying
\[
\Big( \frac{\nu }{ | \xi - x | } \Big)^\lambda f\Big( x + \nu^2\frac{\xi - x}{ | \xi - x |^2} \Big) \leqslant f(\xi)
\]
for all $x, \xi \in \partial\Rset_+^n$ satisfying $| x - \xi | > \nu > 0$. Then $f$ is constant or is identical to infinity.
\end{lemma}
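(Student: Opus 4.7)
The plan is to exploit the freedom in choosing $x \in \partial\Rset_+^n$ and $\nu>0$ to connect any two distinct points of $\partial\Rset_+^n$ via the Kelvin-type inversion $\xi \mapsto \xi^{x,\nu}$, and then to let $x$ recede to infinity along a suitable ray so that the scaling factor $(\nu/|\xi-x|)^\lambda$ tends to $1$. This is the natural trick whenever a one-sided scaling inequality is imposed for \emph{all} admissible parameters: the extra room in the parameter space should force equality everywhere.

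Concretely, given $\xi \neq \eta$ in $\partial\Rset_+^n$, I would parametrize the line through them by $p(t) = \xi + t(\eta - \xi)$ and set $x = p(t_0)$ for large $t_0 > 1$; since $\partial\Rset_+^n$ is an affine hyperplane, $x$ automatically stays on the boundary. Choose $\nu = \sqrt{|\xi-x|\,|\eta-x|} = |\eta-\xi|\sqrt{t_0(t_0-1)}$. A direct computation with \eqref{eqVariableChange} then verifies $\xi^{x,\nu}=\eta$ and $|\xi-x| = t_0|\eta-\xi| > \nu$, so the standing hypothesis applies and reads
\[
\Big(\frac{t_0-1}{t_0}\Big)^{\lambda/2} f(\eta) \leqslant f(\xi).
\]
Letting $t_0 \to +\infty$ the prefactor tends to $1$, and passing to the limit gives $f(\eta) \leqslant f(\xi)$; the cases $f(\eta) \geqslant 0$ and $f(\eta)<0$ must be handled separately because the prefactor is less than $1$, but in both situations the limit inequality is the same. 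Swapping the roles of $\xi$ and $\eta$ (i.e.\ sliding $x$ off to infinity on the opposite ray) yields the reverse inequality $f(\xi) \leqslant f(\eta)$, so $f(\xi) = f(\eta)$ whenever both are finite.

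It then remains to accommodate the infinite values allowed by the statement. If $f$ takes a finite value at some $\xi_0$, the symmetric argument applied with $\xi=\xi_0$ forces $f(\eta) = f(\xi_0)$ at every $\eta$, so $f$ is finite-valued and constant. Otherwise $f$ is everywhere infinite, which is the second alternative. The main point to watch is purely bookkeeping: one must verify that the constructed $(x,\nu)$ genuinely falls in the admissible range $|\xi-x|>\nu>0$ (which is what the choice $t_0>1$ and the formula $\nu^2=|\xi-x||\eta-x|$ guarantee) and that the monotone passage to the limit is legitimate irrespective of the sign of $f(\eta)$. I expect no serious obstacle beyond this, since the Kelvin inversion on the hyperplane $\partial\Rset_+^n$ behaves exactly as on $\Rset^{n-1}$, and the freedom to push $x$ along an entire ray is precisely what collapses the one-parameter family of inequalities into a rigidity statement.
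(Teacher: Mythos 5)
Your proof is correct, and it is precisely the standard argument: the paper does not actually prove this lemma but quotes it from Li and Zhu, \emph{J. Anal. Math.} \textbf{90} (2003), Appendix B, and your construction --- sliding $x=\xi+t_0(\eta-\xi)$ off to infinity with $\nu^2=|\xi-x|\,|\eta-x|$ so that $\xi^{x,\nu}=\eta$, then letting the prefactor $((t_0-1)/t_0)^{\lambda/2}\to 1$ --- is exactly the one in that reference. The only quibble is minor: no genuine sign case-split is needed for finite $f(\eta)$, since $c(t_0)f(\eta)\to f(\eta)$ as $t_0 \to \infty$ by continuity regardless of sign; the only real case distinction is for the extended values $\pm\infty$, which you handle correctly.
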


In the second lemma, if the inequality in Lemma \ref{lemKey1} becomes equality, then we can characterize the function $f$ completely; see also Lemma \ref{lem3}. 

\begin{lemma}\label{lemKey2}
For $\nu \in \Rset$ and $f$ a continuous function in $\partial\Rset_+^n$. Suppose that for every $x\in \partial\Rset_+^n$, there exists $\lambda(x)>0$ such that
\[ \Big( {\frac{{\nu (x)}}{ | \xi - x | }} \Big)^\lambda  f\Big( {x + \nu (x)^2\frac{\xi - x}{{| \xi - x |}^2}} \Big) = f(\xi),\]
for all $\xi \in \partial\Rset_+^n \backslash \{ x\} $. Then
\[
f(x) = \pm a{\big( d + |  x - \overline x  |^2  \big)^{-\lambda /2}}
\]
for some $a \geqslant 0$, $d>0$ and $\overline x \in \partial \Rset_+^n$.
\end{lemma}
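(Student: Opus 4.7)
The plan is to reduce the problem to classifying quadratic polynomials. Setting $g := |f|^{-2/\lambda}$ converts the invariance into a clean Kelvin-type equation, and the target form for $f$ corresponds to $g$ being a specific quadratic polynomial in $\xi$.

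I would first settle sign and regularity. If $f(\xi_0) = 0$ at some $\xi_0$, substituting into the functional equation and letting $x$ vary forces $f$ to vanish on a dense set, so $f \equiv 0$ (the trivial case $a = 0$). Otherwise $f$ has constant sign by continuity; without loss of generality take $f > 0$. The implicit function theorem applied to $f(\xi_0) = (\nu/|\xi_0 - x|)^\lambda f(\xi_0^{x,\nu})$ at a fixed $\xi_0$ with $f(\xi_0) \neq 0$ shows $\nu(x)$ is smooth in $x$, and a bootstrap via the functional equation gives $f \in C^\infty$. With $g := f^{-2/\lambda}$, the invariance becomes
\[
g(\xi^{x,\nu(x)}) = \bigl(\nu(x)/|\xi-x|\bigr)^2 g(\xi), \qquad \xi \in \partial\Rset_+^n \setminus \{x\},
\]
and the desired conclusion is equivalent to $g(\xi) = \gamma(d + |\xi - \bar x|^2)$ for some $\gamma, d > 0$ and $\bar x \in \partial\Rset_+^n$, in which case the associated Kelvin radius comes out to be $\nu(x) = \sqrt{d + |x - \bar x|^2}$.

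The core step is to prove $g$ takes this quadratic form. Taking $|\xi| \to \infty$ in the invariance yields the asymptotic $g(\xi) \sim \bigl(g(x)/\nu(x)^2\bigr) |\xi|^2$ at infinity; since the leading rate is a property of $g$ alone, the ratio $g(x)/\nu(x)^2$ is a positive constant $\gamma$, so $\nu(x)^2 = g(x)/\gamma$. Substituting this back produces the symmetric identity $\gamma\, g(\xi^{x,\nu(x)}) |\xi-x|^2 = g(x)\, g(\xi)$. I would then differentiate this identity in $x$ at a base point, exploit the symmetry $x \leftrightarrow \xi$ to pair equations, and then differentiate once more and take a limit (such as $x \to \xi$) to cancel the singular terms, extracting the condition that the Hessian of $g$ is a constant multiple of the identity. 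Hence $g$ is a quadratic polynomial, and direct coefficient matching in the invariance eliminates all candidates except $g(\xi) = \gamma(d + |\xi - \bar x|^2)$ with $d > 0$. Undoing $g = f^{-2/\lambda}$ and restoring the $\pm$ sign yields the claimed form with $a = \gamma^{-\lambda/2}$.

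The main obstacle I expect is the differentiation step that isolates the Hessian of $g$. Since $\xi \mapsto \xi^{x,\nu(x)}$ is singular at $\xi = x$, controlling divergent terms in the twice-differentiated equation requires careful expansion, and one must verify the resulting candidate $g$ actually has a constant Hessian rather than merely a Hessian of constant trace. An alternative would be to compose two Kelvin invariances with different centers to produce invariance under a limiting dilation or translation, which pins the center $\bar x$ as the unique minimizer of $g$ and reduces the remaining problem to a one-dimensional ODE; the composition calculation is itself technical but cleaner in spirit.
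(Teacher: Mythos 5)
The paper does not prove this lemma: it is taken verbatim, with citation, from Li--Zhu \cite[Appendix~B]{lz2003}. So the relevant comparison is with the literature proof rather than with anything in the present paper.

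Your overall reduction is in the right spirit and matches the standard opening moves: passing to $g = f^{-2/\lambda}$ so that the invariance becomes $g(\xi^{x,\nu(x)}) = (\nu(x)/|\xi-x|)^2 g(\xi)$, and letting $|\xi|\to\infty$ to extract $g(x)/\nu(x)^2 = \gamma$ (a constant) and hence the symmetric identity $\gamma\, g(\xi^{x,\nu(x)})\,|\xi-x|^2 = g(x)\,g(\xi)$. These steps are correct. But there are two genuine gaps.

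First, regularity. The lemma is stated for $f$ merely continuous, and you invoke the implicit function theorem to make $\nu(\cdot)$ smooth and then ``bootstrap'' smoothness of $f$ from the functional equation. This is circular: applying the IFT to $(\nu/|\xi_0-x|)^\lambda f(\xi_0^{x,\nu}) = f(\xi_0)$ already requires $f\in C^1$, and the functional equation only transfers regularity from a set where $f$ is already known to be smooth. The literature proofs (Li--Zhu, Li~2004, Frank--Lieb) are designed to work for continuous (or $C^1$) functions precisely because they avoid differentiating $f$; they argue algebraically through evaluations at finitely many well-chosen points and limits.

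Second, the core step that would actually force $g$ to be the quadratic $\gamma(d + |\xi-\bar x|^2)$ is left as a plan (``differentiate in $x$, use $x\leftrightarrow\xi$ symmetry, differentiate again, take $x\to\xi$, conclude the Hessian is a constant multiple of the identity''). This is exactly where the substance of the lemma lies, and you acknowledge it may fail: the map $\xi\mapsto\xi^{x,\nu(x)}$ is singular at $\xi=x$, and the two-derivative computation with a subsequent singular limit is precisely the kind of argument that needs to be carried out, not described. Note also that ``Hessian of constant trace'' vs.\ ``Hessian a constant multiple of the identity'' is a real distinction and neither follows obviously from a trace-type cancellation. By contrast, the route in Li--Zhu (and the cleaner variant in Frank--Lieb \cite{fl2010}) obtains the quadratic form by direct algebraic manipulation of the symmetric identity with specific choices of $x$ (and by composing two inversions to produce a translation), so no second derivatives of $g$ are needed at all. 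Your ``alternative'' idea -- compose two Kelvin inversions -- is in fact the standard approach, and executing that would be closer to a complete proof; as written, neither route is carried to completion.

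A minor point: the ``$f(\xi_0)=0 \Rightarrow f\equiv 0$'' step also needs an argument. The functional equation gives $f(\xi_0^{x,\nu(x)})=0$ for all $x\ne\xi_0$, but the image of $x\mapsto\xi_0^{x,\nu(x)}$ is an $(n-1)$-parameter family whose density in $\partial\Rset_+^n$ is not immediate; one should either iterate the argument or use the sign-definiteness of $f$ that already follows from connectedness.
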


It is worth noting that Lemma \ref{lemKey2} also holds for a larger class consisting measures, known as the characterization of inversion invariant measures; see \cite[Theorem 1.4]{fl2010}. The last lemma is in the same fashion of Lemma \ref{lemKey2} above for functions defined on $\overline{\Rset_+^n}$.

\begin{lemma}\label{lemKey3}
For $\nu \in \Rset$ and $f$ a function defined on $\overline{\Rset_+^n}$ and valued in $[-\infty, +\infty]$ satisfying
\[
\Big( {\frac{\nu }{ | y - x | }} \Big)^\lambda f\Big( {x + \nu^2\frac{{y - x}}{{| y - x |}^2}} \Big) \leqslant f(y)
\]
for all $x \in \partial \Rset_+^n$ and $y \in \Rset_+^n$ satisfying $| x - y | > \nu > 0$. Then $f$ restricted to $\partial \Rset_+^n$ is constant or is identical to infinity. In other words, $f$ depends only on the last coordinate.
\end{lemma}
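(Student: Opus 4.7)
The plan is to reduce Lemma \ref{lemKey3} to Lemma \ref{lemKey1} applied on the $(n-1)$-dimensional hyperplane $\partial\Rset_+^n \cong \Rset^{n-1}$.

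The key geometric observation is the following: whenever the center $x$ of inversion lies on $\partial\Rset_+^n$, the map
\[
\Phi_{x,\nu}(y) := x + \nu^2\,\frac{y - x}{|y - x|^2}
\]
preserves both $\partial\Rset_+^n$ and $\Rset_+^n$ separately. Indeed, writing $x = (x',0)$ and $y = (y',y_n)$, the last coordinate of $\Phi_{x,\nu}(y)$ equals $\nu^2 y_n / |y - x|^2$, which has the same sign as $y_n$ and vanishes if and only if $y_n = 0$. Consequently, the restriction $\Phi_{x,\nu}|_{\partial\Rset_+^n}$ is the standard $(n-1)$-dimensional inversion about $x$ with radius $\nu$.

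Next I would transfer the hypothesis from the open half-space $\Rset_+^n$ down to the boundary hyperplane. Given $x, z \in \partial\Rset_+^n$ with $|z - x| > \nu$, pick a sequence $y_k = (z', 1/k) \in \Rset_+^n$ converging to $z$; for $k$ large enough the inequality $|y_k - x| > \nu$ persists, so the hypothesis at each $y_k$ reads
\[
\bigl(\nu / |y_k - x|\bigr)^\lambda\, f\bigl(\Phi_{x,\nu}(y_k)\bigr) \leqslant f(y_k).
\]
In the intended application to Proposition \ref{thmCLASSIFICATION}, the function $f$ under consideration is smooth by Lemma \ref{lem-Regularity}, which legitimizes the passage $k \to \infty$ and yields
\[
\bigl(\nu / |z - x|\bigr)^\lambda\, f\bigl(\Phi_{x,\nu}(z)\bigr) \leqslant f(z).
\]
Thus the restriction $g := f|_{\partial\Rset_+^n}$, regarded as a function on $\Rset^{n-1}$, satisfies exactly the hypothesis of Lemma \ref{lemKey1} in dimension $n-1$. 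Applying Lemma \ref{lemKey1} immediately gives that $g$ is constant or identically $+\infty$, which is the conclusion.

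The only step worth flagging as a potential obstacle is the limit passage above: for a general $[-\infty, +\infty]$-valued $f$ without regularity assumptions, one would need at least lower semicontinuity at boundary points to carry out the limit. Since every use of Lemma \ref{lemKey3} in this paper concerns smooth $f$, this is not a genuine issue here; alternatively, one may simply stipulate the hypothesis on the whole of $\overline{\Rset_+^n}$ and bypass the limit entirely, making the reduction to Lemma \ref{lemKey1} wholly immediate.
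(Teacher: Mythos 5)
The paper does not give its own proof of Lemma~\ref{lemKey3}; it quotes it (together with Lemmas~\ref{lemKey1} and~\ref{lemKey2}) from \cite[Appendix B]{lz2003}. So your proposal is an attempted proof rather than a variant of one in the paper, and it has a substantive gap.

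Your reduction to Lemma~\ref{lemKey1} on the hyperplane $\partial\Rset_+^n$ only establishes the \emph{first} sentence of the conclusion, that $f|_{\partial\Rset_+^n}$ is constant (or $\equiv +\infty$). It does not establish the \emph{second} sentence, that $f$ depends only on the last coordinate on all of $\overline{\Rset_+^n}$ --- and that is the statement the paper actually uses in Case~1 of the proof of Proposition~\ref{thmCLASSIFICATION}, where Lemma~\ref{lemKey3} is invoked to conclude $v(y) = v(0,y_n)$ for $y$ in the open half-space. The two parts of the conclusion are genuinely decoupled in the hypothesis: since each $\Phi_{x,\nu}$ with $x\in\partial\Rset_+^n$ maps $\Rset_+^n$ to $\Rset_+^n$ and $\partial\Rset_+^n$ to $\partial\Rset_+^n$, the Kelvin-transform inequality never compares an interior value of $f$ with a boundary value. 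Consequently, your argument, which only invokes the inequality along the boundary, can say nothing about $f$ in the interior. Knowing that $f$ is constant on $\partial\Rset_+^n$ does not propagate into $\Rset_+^n$.

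To get the interior statement one must argue directly in the open half-space, e.g.\ by the usual Li--Zhu sphere-degeneration: for a unit vector $e$ tangent to $\partial\Rset_+^n$ and a fixed $a\in\Rset$, take $x = R e$, $\nu = R - a$, and let $R\to+\infty$; the sphere $\partial B(Re, R-a)$ converges to the vertical hyperplane $\{\langle \xi,e\rangle = a\}$, the inversion $\Phi_{x,\nu}$ converges to the reflection $\xi\mapsto \xi + 2(a - \langle\xi,e\rangle)e$, and the weight $\bigl(\nu/|\,\cdot - x|\bigr)^\lambda \to 1$. The hypothesis then forces $f$ to be symmetric under every such reflection (one gets one inequality from each side), and since compositions of parallel reflections generate all translations parallel to $\partial\Rset_+^n$, $f$ is invariant under translations in the tangential directions, i.e.\ depends only on $y_n$. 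Your argument would need to be supplemented by something of this kind; the boundary reduction alone is not enough. (Your remarks about the need for lower semicontinuity in the limit $y_k\to z$ are a fair secondary caveat, but the primary obstruction is the missing interior statement.)
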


With all ingredients above, we are now in a position to prove Proposition \ref{thmCLASSIFICATION}. First, there are two possible cases:

\noindent\textbf{Case 1.} If $\overline\nu (x)=\infty$, then for any $x \in \partial \Rset_+^n$ we know that $u_{x,\nu}(\xi) \geqslant u(\xi)$ for all $\lambda > 0$ and for any $x \in \partial \Rset_+^n$ and $\xi \in \partial \Rset_+^n$ satisfying $|\xi-x| \geqslant \lambda$. By Lemma \ref{lemKey1}, $u$ must be constant, say $u_0$. Similarly, Lemma \ref{lemKey3} tells us that $v$ depends only on the last coordinate in the sense that
\[
v(y) = v(0, y_n) = \int_{\partial \Rset_+^n} \big| |x|^2 + y_n^2 \big|^\lambda (u_0)^{-\theta} dx .
\]
Upon a change of variables, we deduce that
\[
v(y) = v(0, y_n) = C y_n^{2\lambda + n-1} \int_0^{+\infty} \big| \rho^2 + 1 \big|^\lambda \rho^{n-2} d\rho .
\]
From this we conclude that $v(0, y_n) = +\infty$ provided $y_n\ne 0$ since $\lambda>0$ and $n-2 \geqslant 0$. Thus, $(u,v)$ does not solve \eqref{eqIntegralSystem}. 

\noindent\textbf{Case 2.} Otherwise, there exists some $x_0 \in \partial \Rset_+^n$ such that $\overline\nu (x_0)<\infty$. Then by Lemma \ref{lemLamdaVanishAtEveryPoint}, we deduce that $\overline\nu (x)<\infty$ for any point $x \in \partial \Rset_+^n$. We are now in a position to apply Lemma \ref{lemKey2} to conclude that $u$ is of the form
\begin{equation}\label{eqForm4U}
u(x) = a ( b ^2 + | {x - \overline x } | ^2)^{\lambda/2}
\end{equation}
for some $a , d >0$ and some point $\overline x \in \partial \Rset_+^n$. Using the form of $u$ and the equation satisfied by $v$ in \eqref{eqIntegralSystem}, we get
\[
v(y) = a \int_{\partial \Rset_+^n} \frac{|x-y|^\lambda}{ ( b ^2 + | x - \overline x | ^2)^{n-1+ \lambda/2} } dx
\]
for any $y \in \Rset_+^n$. If we restrict $y$ to $\partial \Rset_+^n$, we clearly get
\[
u(y|_{\partial \Rset_+^n}) = a\int_{\partial \Rset_+^n} \frac{|x-y|_{\partial \Rset_+^n}|^\lambda}{ ( b^2 + | {x - \overline x } | ^2)^{n-1+ \lambda/2} } dx.
\]
From this, we obtain $v(y|_{\partial \Rset_+^n}) = u(y|_{\partial \Rset_+^n})$. Hence, $v$ is of the following form
\begin{equation}\label{eqForm4V}
v(x,0)= a ( b^2 + | x - \overline x | ^2)^{\lambda/2}.
\end{equation}
Our proof of Proposition \ref{thmCLASSIFICATION} is now complete.

Finally, we conclude this section by giving the proof of Corollary \ref{explicit}. Recall that under the current situation we have $\lambda =2$. By Proposition \ref{thmCLASSIFICATION} and up to a constant multiplication, translation, and dilation, we know that the extremal function $f$ has the following form
\[
f(x) = (1 +|x|^2)^{-n},\quad x\in \partial \Rset_+^n,
\]
and thanks to our system
\[
g(y) = \alpha \Big(\int_{\partial\Rset_+^n} |x-y|^2 f(x) dx\Big)^{-n-1},\quad y \in \Rset_+^n,
\]
for some constant $\alpha >0$. An easy computation shows that
\begin{equation}\label{eq:comp}
\int_{\partial\Rset_+^n} (1+|x|^2)^{-n} dx = \int_{\partial\Rset_+^n} (1+|x|^2)^{-n} |x|^2 dx = \frac{\pi^{(n-1)/2} \Gamma((n+1)/2)}{\Gamma(n)}.
\end{equation}
Therefore we conclude that 
$$g(y) = \beta (1+|y|^2)^{-n-1}$$ 
for some $\beta > 0$. Denote $h(y) = (1+|y|^2)^{-n-1}$, it is evident that
\begin{equation}\label{eq:a0}
\Cscr_{n,1-1/n, n/(n+1)}^+ = \frac{\int_{\Rset_+^n}\int_{\partial\Rset_+^n} f(x)  |x-y|^2 h(y) dxdy}{\|f\|_{L^{(n-1)/n}(\partial\Rset_+^n)} \|h\|_{L^{n/(n+1)}(\Rset_+^n)}}.
\end{equation}
We now estimate \eqref{eq:a0} term by term. First, in view of \eqref{eq:comp}, we have
\begin{equation}\label{eq:a1}
\int_{\Rset_+^n}\int_{\partial\Rset_+^n}  f(x)  |x-y|^2 h(y) dxdy = \frac{\pi^{(n-1)/2} \Gamma((n+1)/2)}{\Gamma(n)}\int_{\Rset_+^n} (1+|y|^2)^{-n} dy.
\end{equation}
Regarding to $\|f\|_{L^{(n-1)/n}(\partial\Rset_+^n)}$ and $\|h\|_{L^{n/(n+1)}(\Rset_+^n)}$, it is easy to check that
\begin{equation}\label{eq:a2}
\int_{\partial\Rset_+^n} (1+|x|^2)^{-n+1} dx = \pi^{(n-1)/2} \frac{\Gamma((n-1)/2)}{\Gamma(n-1)},
\end{equation}
and that
\begin{equation}\label{eq:a3}
\int_{\Rset_+^n} (1+|y|^2)^{-n} dy = \frac{\pi^{n/2}}2 \frac{\Gamma(n/2)}{\Gamma(n)}.
\end{equation}
Plugging \eqref{eq:a1}, \eqref{eq:a2} and \eqref{eq:a3} into \eqref{eq:a0}, we obtain \eqref{eq:explicit} as claimed.


\section{A log-HLS inequality on half space: Proof of Theorem \ref{log-HLSonhalfspace}}

Throughout this section, we choose $p = 2(n-1)/(2(n-1) + \lambda)$ and $r = 2n/(2n +\lambda)$. The benefit of this particular choice for $p$ and $r$ is that there exists an optimizer pair $(f,g)$ with $f(x)=u(x)^{1/(p-1)}=(1+|x|^2)^{1-n - \lambda/2}$ on $\partial \Rset_+^n$ and $g(y',0)=v(y',0)^{1/(r-1)}=(1+|y'|^2)^{-n-\lambda/2}$, up to dilations and translations. Consider the stereographic projection $\mathcal{S}: \Rset^n \to \mathbb S^n$ given by
\[
\mathcal{S}(x) = \Big(\frac{2x_1}{1+|x|^2},\frac{2x_2}{1+|x|^2},\ldots, \frac{1-|x|^2}{1+|x|^2}, \frac{2x_n}{1+|x|^2}\Big).
\]
It is easy to see that $\mathcal{S}$ transforms $\Rset_+^n$ into $\mathbb S^n_+ = \{(\xi_1,\cdots,\xi_{n+1})\in \mathbb S^n\, :\, \xi_{n+1} > 0\}$ and transforms $\partial\Rset_+^n$ into
 $\mathbb S^n_0 = \{(\xi_1,\cdots,\xi_{n+1})\in \mathbb S^n\, :\, \xi_{n+1} = 0\}$ which can be identified with $\mathbb S^{n-1}$. It is well-known that the Jacobian of $\mathcal{S}$ at any $x\in \Rset^n$ is 
 \[
 J_{\mathcal{S}}(x) = \big(2/(1+|x|^2) \big)^{n} 
 \]
 and the Jacobian of $\mathcal{S}|_{\partial\Rset_+^n}$ at any $y\in \partial\Rset_+^n$ is nothing but
 \[
 J_{\mathcal{S}|_{\partial\Rset_+^n}}(y) = \big(2/(1+|y|^2)\big)^{n-1}.
 \]
Let $f,g$ be nonnegative functions on $\partial\Rset_+^n$ and $\Rset_+^n$, respectively. We let $F:\mathbb S^n_0 \to \Rset$ and $G: \mathbb S^n_+ \to \Rset$ be functions given by the following
 \[
 f(y) = F(\mathcal{S}|_{\partial\Rset_+^n}(y)) J_{\mathcal{S}|_{\partial\Rset_+^n}}(y)^{1/p},\quad y\in \partial\Rset_+^n,
 \]
 and
 \[
 g(x) = G(\mathcal{S}(x)) J_{\mathcal{S}}(x)^{1/r},\quad x\in \Rset_+^n.
 \]
 We then can readily check that
\begin{equation}\label{eq:changevariable}
 \int_{\partial\Rset_+^n} f(y)^p dy = \int_{\mathbb S^n_0} F(\eta)^p d\eta,\quad \int_{\Rset_+^n} g(x)^r dx = \int_{\mathbb S^n_+} G(\xi)^r d\xi,
 \end{equation}
 where $d\xi$ and $d\eta$ denote the Lebesgue measure on $\mathbb S^n$ and on $\mathbb S_0^n$, respectively. (Note that the Lebesgue measure on $\mathbb S_0^n$ is also the Lebesgue measure on the sphere $\mathbb S^{n-1}$.) Recall that for $x,y\in \Rset^n$, there holds
\begin{equation}\label{eq:crucialequality}
|\mathcal{S}(x) - \mathcal{S}(y)|^2 = \frac2{1+|y|^2} |x-y|^2 \frac{2}{1+|x|^2}.
\end{equation}
From \eqref{eq:crucialequality}, we easily imply that
\begin{equation}\label{eq:transferonsphere}
\int_{\Rset^n_+} \int_{\partial\Rset_+^n} g(x) |x-y|^\lambda f(y) dx dy = \int_{\mathbb S^n_+}\int_{\mathbb S_0^n} G(\xi) |\xi-\eta|^\lambda F(\eta) d\xi d\eta.
\end{equation}
Combining \eqref{eq:reverseHLS}, \eqref{eq:changevariable}, and \eqref{eq:transferonsphere} gives us a spherical forms of the reversed HLS on the half space as follows
\begin{equation}\label{eq:sphericalform}
\int_{\mathbb S^n_+}\int_{\mathbb S_0^n} G(\xi) |\xi-\eta|^\lambda F(\eta) d\xi d\eta \geqslant C(n,\lambda) \|F\|_{L^p(\mathbb S^n_0)} \|G\|_{L^r(\mathbb S^n_+)},
\end{equation}
for any nonnegative functions $F\in L^p(\mathbb S^n_0)$ and $G\in L^r(\mathbb S^n_+)$ with the sharp constant
\[
C(n,\lambda) =  \|E_\lambda f_0\|_{L^q(\Rset_+^n)} \|f_0\|_{L^p(\partial\Rset_+^n)}^{-1}
\]
and the optimizer function
\[
f_0(y) = \big(1+|y|^2\big)^{1-n-\lambda/2}.
\] 
Similar to the classical HLS inequality on $\Rset^n$ which can be lifted to the sphere $\mathbb S^n$ for which the competing symmetries argument can be used. In view of the spherical form \eqref{eq:sphericalform} of \eqref{eq:reverseHLS}, it seems that a competing symmetries argument could work in this scenario. Recall that $q = -2n/\lambda$, from this it is easy to check that
\begin{equation}\label{eq:constantonsphere}
C(n,\lambda) = |\mathbb S^{n-1}|^{-\lambda/2(n-1)} \Big(\int_{\mathbb S^n_+}\Big(\int_{\mathbb S^n_0} |\xi -\eta|^\lambda d\sigma(\eta)\Big)^{-2n/\lambda} d\xi\Big)^{-\lambda/2n},
\end{equation}
where $d\sigma$ is the normalization of the Lebesgue measure on $\mathbb S^n_0$ in the sense that $\sigma(\mathbb S^n_0) = 1$.

The next Proposition gives us the behavior of the constant $C(n,\lambda)$ when $\lambda$ tends to zero.

\begin{proposition}\label{constantnearzero}
Let $n\geqslant 2$, there holds
\begin{equation}\label{eq:nearzero}
C(n,\lambda) = 1-\frac\lambda{2(n-1)} \ln |\mathbb S^{n-1}| -\frac\lambda{2n} \ln\Big(\int_{\mathbb S^n_+} e^{-2n \int_{\mathbb S^n_0} \ln(|\xi-\eta|) d\sigma(\eta)} d\xi\Big) + o(\lambda)
\end{equation}
with the error $o(\lambda)/\lambda \to 0$ as $\lambda\to 0$.
\end{proposition}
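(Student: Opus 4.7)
The plan is to take the logarithm of $C(n,\lambda)$ in \eqref{eq:constantonsphere} and perform a first-order asymptotic expansion in $\lambda$. Setting
\begin{equation*}
I(\lambda) := \int_{\mathbb S^n_+} \Big( \int_{\mathbb S^n_0} |\xi-\eta|^\lambda d\sigma(\eta) \Big)^{-2n/\lambda} d\xi,
\end{equation*}
\eqref{eq:constantonsphere} immediately gives
\begin{equation*}
\ln C(n,\lambda) = -\frac{\lambda}{2(n-1)} \ln |\mathbb S^{n-1}| - \frac{\lambda}{2n} \ln I(\lambda).
\end{equation*}
Hence the whole proof reduces to showing that $I(\lambda) \to I(0) \in (0,+\infty)$ as $\lambda \to 0^+$, where
\begin{equation*}
I(0) := \int_{\mathbb S^n_+} \exp\Big( -2n \int_{\mathbb S^n_0} \ln |\xi-\eta| d\sigma(\eta) \Big) d\xi.
\end{equation*}
Indeed, once this is established, $\ln I(\lambda) \to \ln I(0)$ and so $\ln C(n,\lambda) = O(\lambda)$; the elementary expansion $e^x = 1 + x + O(x^2)$ applied to $x = \ln C(n,\lambda)$ then yields \eqref{eq:nearzero}.

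For the pointwise convergence of the integrand of $I(\lambda)$, I would fix $\xi \in \mathbb S^n_+$ and observe that $\xi_{n+1}>0$ implies $|\xi-\eta|>0$ for every $\eta \in \mathbb S^n_0$ and that $\eta \mapsto \ln|\xi-\eta|$ is integrable on $(\mathbb S^n_0, d\sigma)$ since $\ln$ is locally integrable on $\mathbb S^{n-1}$ for $n\geq 2$. The expansion $|\xi-\eta|^\lambda = 1 + \lambda \ln|\xi-\eta| + O(\lambda^2)$ integrated against $d\sigma$ gives
\begin{equation*}
\int_{\mathbb S^n_0} |\xi-\eta|^\lambda d\sigma(\eta) = 1 + \lambda \int_{\mathbb S^n_0} \ln|\xi-\eta| d\sigma(\eta) + O(\lambda^2),
\end{equation*}
which raised to the power $-2n/\lambda$ converges, as $\lambda \to 0^+$, to $\exp(-2n \int_{\mathbb S^n_0} \ln|\xi-\eta| d\sigma(\eta))$.

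The main obstacle is to produce a $\lambda$-uniform integrable majorant for the dominated convergence theorem. For this I would invoke Jensen's inequality applied to the convex function $\exp$ against the probability measure $d\sigma$ on $\mathbb S^n_0$, which gives
\begin{equation*}
\exp\Big( \int_{\mathbb S^n_0} \ln|\xi-\eta| d\sigma(\eta) \Big) \leq \Big( \int_{\mathbb S^n_0} |\xi-\eta|^\lambda d\sigma(\eta) \Big)^{1/\lambda}
\end{equation*}
for every $\lambda > 0$; raised to the $-2n$ power this bounds the integrand of $I(\lambda)$ above by the integrand of $I(0)$, uniformly in $\lambda$. Finiteness of $I(0)$ then reduces to a uniform lower bound on $\xi \mapsto \int_{\mathbb S^n_0} \ln|\xi-\eta| d\sigma(\eta)$ over $\mathbb S^n_+$. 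Writing $\xi = (\xi', \xi_{n+1})$, the inequality $|\xi-\eta|^2 \geq |\xi'-\eta|^2$ transfers the problem to bounding $r \mapsto \int_{\mathbb S^{n-1}} \ln|r\omega -\eta| d\sigma(\eta)$ on $[0,1]$ (the integral being rotationally invariant in $\omega \in \mathbb S^{n-1}$); this function is continuous on $[0,1]$ by dominated convergence using the local integrability of $\ln$ on $\mathbb S^{n-1}$, and hence bounded. The dominated convergence theorem then delivers $I(\lambda) \to I(0) \in (0,+\infty)$, and the proof concludes as outlined.
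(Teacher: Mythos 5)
Your proposal is correct and, while it reaches the same conclusion as the paper's argument, it travels a genuinely different technical route. The paper's proof establishes a uniform $L^2(\mathbb S^n_0,d\sigma)$ bound on $\eta\mapsto\ln|\xi-\eta|$ over all $\xi\in\mathbb S^n_+$, uses it to obtain the expansion $\int_{\mathbb S^n_0}|\xi-\eta|^\lambda\,d\sigma(\eta)=1+\lambda H(\xi)+o(\lambda)$ \emph{uniformly} in $\xi$, and then pushes this uniform expansion through the $(-2n/\lambda)$-th power and the outer integral. You instead take the logarithm of \eqref{eq:constantonsphere} and reduce everything to a single limit $I(\lambda)\to I(0)$, which you prove by pointwise convergence of the integrand plus the dominated convergence theorem, using Jensen's inequality $\exp\bigl(\int_{\mathbb S^n_0}\ln|\xi-\eta|\,d\sigma(\eta)\bigr)\le\bigl(\int_{\mathbb S^n_0}|\xi-\eta|^\lambda\,d\sigma(\eta)\bigr)^{1/\lambda}$ to produce a $\lambda$-independent majorant, and checking finiteness of $I(0)$ through the monotonicity $|\xi-\eta|\ge|\xi'-\eta|$ and continuity of the resulting radial profile. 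Both routes rest on the same underlying fact (local integrability of $\ln$ on $\mathbb S^{n-1}$, valid since $n\ge2$), but your DCT/Jensen argument has the merit of making the required bounds explicit — the paper merely asserts the uniform $L^2$ bound — and of isolating the reduction $\ln C(n,\lambda)=-\tfrac{\lambda}{2(n-1)}\ln|\mathbb S^{n-1}|-\tfrac{\lambda}{2n}\ln I(\lambda)$ up front, so that only continuity of $I$ at $\lambda=0$ needs to be verified. The paper's uniform-expansion approach avoids any appeal to a majorant, but it quietly asks the reader to supply the proof of the uniform $L^2$ bound, which is of the same difficulty as what you prove explicitly.
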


\begin{proof}
We first observe that there exists a constant $C > 0$ which is independent of $\xi\in \mathbb S^n_+$ such that 
\[
\int_{\mathbb S^n_0} \big(\ln |\xi -\eta| \big)^2 d\sigma(\eta) \leqslant C
\]
for all $\xi \in \mathbb S^n_+$. Hence
\[
\int_{\mathbb S^n_0} |\xi-\eta|^\lambda d\sigma(\eta) = 1 + \lambda \int_{\mathbb S^n_0} \ln |\xi-\eta| d\sigma(\eta) + o(\lambda)
\]
uniformly in $\xi \in \mathbb S^n_+$ when $\lambda\to 0$. For the sake of simplicity, let us denote
\[
H(\xi) = \int_{\mathbb S^n_0} \ln |\xi-\eta|  d\sigma(\eta)
\]
for each $\xi \in \mathbb S^n_+$. Since $H$ is bounded, we obtain
\[
\int_{\mathbb S^n_0} |\xi-\eta|^\lambda d\sigma(\eta) = (1 + \lambda H(\xi))(1 + o(\lambda))
\]
uniformly in $\xi \in \mathbb S^n_+$ when $\lambda\to 0$. Therefore, we have
\[
\Big(\int_{\mathbb S^n_+}\Big(\int_{\mathbb S^n_0} |\xi -\eta|^\lambda d\sigma(\eta)\Big)^{-2n/\lambda} d\xi\Big)^{-\lambda/2n} = (1+ o(\lambda)) \Big(\int_{\mathbb S^n_+}\left(1+\lambda H(\xi)\right)^{-2n/\lambda} d\xi\Big)^{-\lambda/2n}.
\]
Thanks to the boundedness of $H$, we then have
\[
\big(1+ \lambda H(\xi) \big)^{-2n/\lambda} = e^{-2n H(\xi)} (1 + o(1))
\]
uniformly in in $\xi \in S^n_+$ when $\lambda\to 0$. Hence 
\[
\Big(\int_{\mathbb S^n_+}\big(1+\lambda H(\xi)\big)^{-2n/\lambda} d\xi\Big)^{-\lambda/2n} = (1+o(\lambda)) \Big(\int_{\mathbb S^n_+} e^{-2n H(\xi)} d\xi\Big)^{-\lambda/2n}.
\]
Finally, we have
\begin{equation}\label{eq:nearzero1}
\Big(\int_{\mathbb S^n_+}\Big(\int_{\mathbb S^n_0} |\xi -\eta|^\lambda d\sigma(\eta)\Big)^{-2n/\lambda} d\xi\Big)^{-\lambda/2n} = (1 +o(\lambda)) \Big(\int_{\mathbb S^n_+} e^{-2n H(\xi)} d\xi\Big)^{-\lambda/2n}.
\end{equation}
The expansion \eqref{eq:nearzero} now follows from \eqref{eq:constantonsphere} and \eqref{eq:nearzero1}.
\end{proof}

We are now in a position to prove Theorem \ref{log-HLSonhalfspace}.

\begin{proof}[Proof of Theorem \ref{log-HLSonhalfspace}]
It is clear that $p,r\to 1$ when $\lambda \to 0$, and by Proposition \ref{constantnearzero} we have $C(n,\lambda)\to 1$ when $\lambda\to 0$. Our assumptions on $f$ and $g$ ensure that
\[
\lim_{\lambda\to 0} \frac 1\lambda \Big( \int_{\Rset_+^n}\int_{\partial\Rset_+^n} g(x) |x-y|^\lambda f(y) dx dy - 1 \Big) = \int_{\Rset_+^n}\int_{\partial\Rset_+^n} g(x) \ln |x-y| f(y) dx dy.
\]
From this, we obtain
\begin{align*}
\lim_{\lambda\to 0} &\frac{C(n,\lambda)\|f\|_{L^p(\partial\Rset_+^n)}\|g\|_{L^r(\Rset_+^n)}-1}\lambda\\
& =-\int_{\partial\Rset_+^n} f(y) \ln f(y) dy - \int_{\Rset_+^n} g(x) \ln g(x) dx \\
&\quad -\frac1{2(n-1)}\ln |\mathbb S^{n-1}|  -\frac1{2n} \ln\Big(\int_{\mathbb S^n_+} e^{-2n \int_{\mathbb S^n_0} \ln |\xi-\eta| d\sigma(\eta)} d\xi\Big).
\end{align*}
It is easy to see that
\[
C_n = -\frac1{2(n-1)}\ln |\mathbb S^{n-1}|  -\frac1{2n} \ln\Big(\int_{\mathbb S^n_+} e^{-2n \int_{\mathbb S^n_0} \ln |\xi-\eta| d\sigma(\eta)} d\xi\Big).
\] 
The proof of Theorem \ref{log-HLSonhalfspace} is then finished.
\end{proof}

\section*{Acknowledgments}

The authors want to thank Professor Meijun Zhu for stimulating discussion on the references \cite{dz2013, dz2014, hanzhu} as well as for sending us his preprint \cite{zhu2015}. Both authors are very grateful to the three anonymous referees for careful reading of our manuscript and for critical comments and suggestions which substantially improved the exposition of the article. Especially, we are indebted to one of the three referees for bringing the usage of the Gegenbauer polynomials as well as the spherical reflection positivity to out attention. V.H. Nguyen is supported by a grant from the European Research Council (grant number $305629$).

\end{document}